\newtheorem{theorem}{Theorem}[section]
\newtheorem{lemma}[theorem]{Lemma}
\newtheorem{corollary}[theorem]{Corollary}
\newtheorem{proposition}[theorem]{Proposition}
\theoremstyle{definition}
\newtheorem{definition}[theorem]{Definition}
\newtheorem{example}[theorem]{Example}
\theoremstyle{remark}
\newtheorem{remark}[theorem]{Remark}
\numberwithin{equation}{section}
\begin{document}

\title{Dynamics of Fuchsian meromorphic connections with real periods}

\author{Marco Abate}
\author{Karim Rakhimov}
\address{Department of  Mathematics, University of Pisa, Pisa, Italy, 56127}
\email{marco.abate@unipi.it}
\address{V.I. Romanovskiy Institute of Mathematics of Uzbek Academy of Sciences,  Tashkent, Uzbekistan}
\curraddr{}
\email{karimjon1705@gmail.com}
\thanks{The first author is partially supported by the French-Italian University and by Campus France through the Galileo program, under the project ``From rational to transcendental: complex dynamics and parameter spaces”, as well as by Istituto Nazionale di Alta Matematica (INdAM). The second author is partially supported by the National University of Singapore through the grant A-0004285-00-00.}


\subjclass[2010]{34M03, 34M40, 34C05 }
\date{}


\keywords{Meromorphic connections, real periods,  Poincaré-Bendixson theorems, Teichm\"uller lemma, singular flat metrics}

\begin{abstract}
In this paper, we study the dynamics of geodesics of Fuchsian meromorphic connections with real periods, giving a precise characterization of the possible $\omega$-limit sets of simple geodesics in this case. The main tools are the study of the singular flat metric associated to the meromorphic connection, an explicit description of the geodesics nearby a Fuchsian pole with real residue larger than $-1$ and a far-reaching generalization to our case of the classical Teichm\"uller lemma for quadratic differentials. 
\end{abstract}

\maketitle
\tableofcontents
\section{Introduction}

  A \textit{meromorphic connection} on a Riemann surface $S$ is a $\mathbb{C}-$linear operator $\nabla:\mathcal{M}_{TS}\to \mathcal{M}_S^1 \otimes\mathcal{M}_{TS}$, where $\mathcal{M}_{TS}$ is the sheaf of germs of meromorphic sections of the tangent bundle $TS$ and $\mathcal{M}_S^1$ is the space of meromorphic 1-forms on $S$, satisfying the Leibniz rule
$\nabla(\tilde{f}\tilde{s})=\text{d}\tilde{f}\otimes \tilde{s} +\tilde f\nabla \tilde s$
for all $\tilde s\in \mathcal{M}_{TS}$ and $\tilde f\in \mathcal{M}_{S}$. A \textit{geodesic} for a meromorphic connection $\nabla$ is a real smooth curve $\sigma:I\to S^o$, where $I\subseteq\mathbb{R}$ is an interval and $S^o$ is the complement of the poles of $\nabla$ in $S$, satisfying the geodesic equation $\nabla_{\sigma'}\sigma'\equiv0$.  

 The Poincaré-Bendixson theorems, which provide a potential classification of $\omega$-limit sets, were investigated within the context of simple geodesics associated with meromorphic connections in the complex projective space $\mathbb{P}^1(\mathbb C)$ by the first author and Tovena in \cite{AT1}. Subsequently,  in \cite{AB}, the first author and Bianchi extended their inquiry to encompass any compact Riemann surface $S$, successfully establishing Poincaré-Bendixson theorems for simple geodesics in this more general setting. See also \cite{NST} for examples of particularly interesting $\omega$-limit sets.

 In a recent paper \cite{R1}, the second author provided a relationship between meromorphic connections and $k$-differentials and, more generally, dilation surfaces. Nowadays, meromorphic $k$-differentials and dilation surfaces are heavily studied (see, for example, \cite{BCG, DFG, G21, Sch, ST, T}); it turns out that some of the possibilities described in the quoted Poincar\`e-Bendixson theorems do not occur in the case of $k$-differentials. Since $k$-differential can be seen a particular case of singular flat metric (see \cite{SK}), this suggested us to study Poincar\`e-Bendixson theorems for meromorphic connection with \textit{real periods}, i.e., meromorphic connections sharing geodesics and singular points with a singular flat metric. Indeed, in this paper we study possible $\omega$-limit sets for simple geodesics for meromorphic connections with real periods.
 
 We require some definitions before presenting our main result (see Section \ref{holomorphic connection} for more details).  Let $\{(U_\alpha,z_\alpha)\}$ be an atlas for $S$, and $\nabla$ a meromorphic connection on $S$. By definition, there exists $\eta_\alpha\in \mathcal{M}_S^1(U_\alpha)$, such that $\nabla(\partial_\alpha)=\eta_\alpha\otimes\partial_\alpha,$
where $\partial_\alpha:=\frac{\partial}{\partial z_\alpha}$ is the induced local generator of $TS$ over $U_\alpha$.  We say that $p\in S$ is a \textit{Fuchsian} pole of a meromorphic connection $\nabla$ if for some (and hence any) chart $(U_\alpha,z_\alpha)$ around $p$ the representation of $\nabla$ has a simple pole at $p$; the \emph{residue} $\operatorname{Res}_p\nabla$ of $\nabla$ at $p$ is the residue of~$\eta_\alpha$ at~$p$. If all poles of $\nabla$ are Fuchsian then we say that $\nabla$ is a \emph{Fuchsian meromorphic connection}. A \emph{saddle connection} is a simple geodesic connecting two poles. A set $W\subset S$ with $\mathring W=\emptyset$ is a \emph{transversally Cantor-like geodesic set} if the following  conditions holds:
  \begin{enumerate}
        \item there exists a maximal non self-intersecting geodesic $\sigma\colon(\varepsilon_-,\varepsilon_+)\to S^o$ such that $W$ is the closure of the support of~$\sigma$;
         \item for any non self-intersecting geodesic $\gamma\colon(-\delta,\delta)\to S^0$ transverse to $\sigma$ the intersection $\gamma|_{[-\delta/2,\delta/2]}\cap W$ is a perfect totally disconnected set (a Cantor set).
\end{enumerate}
We can now state our main result.

\begin{theorem}\label{pbt1}
   Let $\nabla$ be a Fuchsian meromorphic connection on a compact Riemann surface $S$ with real periods. Set $S^o:=S\setminus\Sigma$, where $\Sigma$ is the set of poles of $\nabla$. Let $\sigma:[0,\varepsilon)\to S^0$ be a  maximal simple geodesic for $\nabla$. Then either
   \begin{enumerate}
     \item the $\omega$-limit set of $\sigma$ is a pole of $\nabla$; or
     \item $\sigma$ is a periodic geodesic; or
      \item the $\omega$-limit is a transversally Cantor-like geodesic set; or
     \item the $\omega$-limit set of $\sigma$  has non-empty interior and non-empty boundary, and each component of its boundary is a graph of saddle connections with no spikes and at least one pole; or
     \item the $\omega$-limit set of $\sigma$ is $S$.
   \end{enumerate}
 \end{theorem}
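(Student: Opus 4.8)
The plan is to reduce the Poincaré–Bendixson analysis for the Fuchsian meromorphic connection to the well-understood dynamics of geodesics for the associated singular flat metric. The hypothesis of \emph{real periods} means, by definition, that $\nabla$ shares its geodesics and singular points with a singular flat metric $\mu$; the first move is therefore to make this correspondence explicit and to translate the question entirely into the language of $\mu$. Concretely, I would fix the maximal simple geodesic $\sigma\colon[0,\varepsilon)\to S^o$ and consider its $\omega$-limit set $\omega(\sigma)$ as a closed subset of the compact surface $S$. Since $S$ is compact, $\omega(\sigma)$ is nonempty, closed, and connected, so the argument is a structural dichotomy on this set.

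The key steps, in order, are as follows. \emph{Step 1:} Dispose of the degenerate cases. If $\omega(\sigma)$ is a single point, that point must be a pole (a regular point would force the geodesic to continue past it, contradicting maximality, and the singular flat metric structure forbids a regular accumulation point for a simple geodesic), giving case~(1); if $\sigma$ returns to itself, the periodicity of case~(2) follows from uniqueness of geodesics through a point with a given direction. \emph{Step 2:} Assume henceforth that $\omega(\sigma)$ is neither a point nor a closed orbit, and split according to whether $\mathring{\omega(\sigma)}=\emptyset$. \emph{Step 3:} In the empty-interior case, I would use the transverse structure: take a short geodesic segment $\gamma$ transverse to $\sigma$ and analyze the first-return map (a Poincaré section). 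The real-periods hypothesis makes this return map an \emph{isometry} of the transversal onto its image (developing the metric along geodesics gives translations in the flat coordinate), and a minimal set of such a one-dimensional isometric return dynamics with empty interior is forced to meet the transversal in a perfect, totally disconnected set --- precisely the transversally Cantor-like condition of case~(3). This is where the key use of the real periods enters, and it is the analogue, for connections, of the classical Teichmüller lemma invoked in the abstract.

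\emph{Step 4:} In the nonempty-interior case, let $G=\partial\,\omega(\sigma)$. If $G=\emptyset$ then $\omega(\sigma)$ is open and closed in the connected surface $S$, hence equals $S$, giving case~(5). Otherwise $G\neq\emptyset$, and I would show that each component of $G$ is invariant under the geodesic flow (being the boundary of an invariant set) and consists of geodesic arcs. The Teichmüller-type lemma, generalized to the singular flat metric of $\nabla$, controls the total turning/holonomy around any cycle of such arcs and forces the arcs to terminate at singular points: a boundary curve cannot close up smoothly without enclosing curvature, so it must pass through poles. This yields that each boundary component is a graph of saddle connections, and the no-spikes condition follows because a spike (a geodesic entering and returning along itself at a pole) would create an interior point of $\omega(\sigma)$ on the ``wrong'' side, contradicting that the arc lies on the boundary. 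This produces case~(4).

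The main obstacle I anticipate is \emph{Step 3/Step 4}, namely establishing and exploiting the isometric nature of the return map and, above all, the generalized Teichmüller lemma for boundary graphs. For genuine quadratic (or $k$-) differentials the Teichmüller lemma is a clean statement about the sum of angles and orders of zeros; here the metric is only locally flat with cone singularities dictated by the residues $\operatorname{Res}_p\nabla$, and the residues larger than $-1$ produce cone angles and local geodesic pictures that must be controlled case-by-case near each Fuchsian pole. Making the local model at a Fuchsian pole with real residue $>-1$ precise (the ``explicit description of geodesics nearby a Fuchsian pole'' promised in the abstract) and patching these local pictures into a global Gauss--Bonnet-type obstruction for closed boundary graphs is the crux; everything else is a repackaging of the classical Poincaré--Bendixson trichotomy adapted to the flat-metric setting. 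I would therefore spend most of the effort proving the local normal form near a Fuchsian pole and the global turning-number identity, from which the five alternatives fall out by the dichotomy on interior and boundary described above.
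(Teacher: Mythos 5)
Your plan correctly identifies the right mechanisms (the adapted singular flat metric, translation-type rigidity of return maps, the local cone structure at Fuchsian poles), but there is a genuine gap at Step~3, and it sits exactly where the paper's new content lies. ``$\omega(\sigma)$ has empty interior'' does \emph{not} split into point / closed orbit / Cantor-like: by the general classification (Theorem~\ref{t3}) two further empty-interior alternatives must be excluded, namely $\omega(\sigma)$ equal to the support of a periodic geodesic not containing $\sigma$, and $\omega(\sigma)$ equal to a boundary graph of saddle connections, with $\sigma$ spiraling towards them from outside. In these cases $\operatorname{supp}(\sigma)$ is disjoint from $W=\omega(\sigma)$ (Lemma~\ref{finitinters}), so a transversal through $\sigma$ with its first-return map says nothing about $W$; the transversally Cantor-like alternative arises only in the recurrent case $\operatorname{supp}(\sigma)\subseteq W$ (Lemma~\ref{l:Acl}). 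Your assertion that ``a minimal set of an isometric one-dimensional return dynamics with empty interior is forced to be perfect and totally disconnected'' is precisely the statement that spiraling cannot occur, and it is asserted rather than proved. The paper makes the rigidity idea precise in a different way: it quotes Theorem~\ref{t3} and then proves two exclusion results, Propositions~\ref{periodicyoqekan} and~\ref{boundarygraphyoqekan}, by enclosing the limit object in a ring domain, cutting along a transverse geodesic $\beta$, and developing via a local isometry onto a Euclidean \emph{rectangle} (Lemma~\ref{euclideanrectangle}); the successive arcs $\sigma_j$ of $\sigma$ develop to parallel segments, which can accumulate a side of the rectangle only by being parallel to it, forcing $\sigma$ to be periodic, a contradiction.

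The second gap, which you yourself flag as ``the main obstacle'' but do not resolve, is the boundary-graph case: there the return map is not a single interval isometry, since orbits pass on various sides of the poles, and the rectangle can be built only after showing that the curves equidistant from the graph on the side containing $\sigma$ are periodic geodesics (Lemma~\ref{pbl2}). This in turn requires the local theory at Fuchsian poles with real residue $\rho>-1$: the adapted chart of Lemma~\ref{l8}, the fact that every vertex of such an $\omega$-limit graph has residue at least $-\tfrac12$ with interior angle exactly $\frac{\pi}{\rho+1}$ (Corollary~\ref{boundaryburchak}, resting on the self-intersection Lemma~\ref{fuchsianyaqin} and Proposition~\ref{orasi2pitaqsim}), so that each sector develops under $z\mapsto z^{\rho+1}$ onto a half-disc in which the equidistant curve is a straight segment; without this, ``the return map is an isometry'' has no content near the graph. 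Two smaller corrections: closed $\Rightarrow$ periodic follows from the adapted flat metric (Corollary~\ref{l:closedperiodic}), not from uniqueness of geodesics with a given initial direction --- for connections without real periods, closed non-periodic geodesics do exist, so your stated reason proves too much; and note that the paper does not re-derive the Poincar\'e--Bendixson trichotomy from scratch as your Steps~1--4 propose (your Step~4 would reprove the results of \cite{AB} quoted as Theorem~\ref{t3}): its actual proof is a two-line reduction, Theorem~\ref{t3} plus the two exclusion propositions.
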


Notice that case (3) was missing from \cite{AT1} and \cite{AB}; in Appendix~\ref{s:appA} we clarify why it should be included, studying geodesics of meromorphic connections whose $\omega$-limit set coincides with its closure. 

The proof depends on two main ingredients. The first one is a detailed study of the behavior of geodesics around Fuchsian poles. In \cite{R1}, the second author has shown that if $p$ is a Fuchsian pole with $\operatorname{Re}\operatorname{Res}_p\nabla\le -1$ then the $\omega$-limit set of a geodesic accumulating~$p$ must reduce to $p$. So we concentrate on the case $\operatorname{Re}\operatorname{Res}_p\nabla>-1$, when we can find local coordinates giving a particularly simple local expression for the local representation $\eta_\alpha$ of~$\nabla$ (see Lemma~\ref{l8}). In particular, when the residue is real we then have explicit formulas for the geodesics close to the pole (see Proposition~\ref{prop2}). 

The second main ingredient is a formula (Theorem~\ref{t:attf}) relating the internal angles of a graph of saddle connections between Fuchsian poles with real residues larger than $-1$ bounding a domain $P\subset S$ with the values of the residues of the poles inside $P$ and the topology of $P$. This formula is a far-reaching generalization of the classical Teichm\"uller lemma for quadratic differentials (see, e.g., \cite[Theorem 14.1]{SK}); particular cases were already obtained in \cite{AB} and \cite{AT1}.

The paper is organized as follows. In Section \ref{holomorphic connection} we recall some definitions and theorems. In Section  \ref{localbehaviorfuchsianpolessec} we study the local behaviour of geodesics around Fuchsian poles. In Section \ref{ATTsection}
we prove our generalization of Teichm\"uller lemma. Finally, in Section \ref{poincare-bendexsonsection} we prove Theorem \ref{pbt1} and in Appendix~\ref{s:appA} we discuss transversally Cantor-like geodesic sets. 

\subsection*{Acknowledgement}
The authors would like to thank Xavier Buff, Fabrizio Bianchi and Guillaume Tahar for useful comments and discussions. 
\section[Preliminary notions]{Preliminary notions} \label{holomorphic connection}

In this section we recall some definitions and theorems from \cite{AT1}, \cite{AB} and \cite{R1}.

\begin{definition}
 A \textit{meromorphic connection} on the tangent bundle $TS$ of a Riemann surface $S$ is a $\mathbb{C}$-linear map   $\nabla:\mathcal{M}_{TS}\to \mathcal{M}_S^1 \otimes\mathcal{M}_{TS}$  satisfying the Leibniz rule
\[
\nabla(\tilde{f}\tilde{s})=\text{d}\tilde{f}\otimes \tilde{s} +\tilde{f}\nabla \tilde{s}
\]
for all $\tilde{s}\in \mathcal{M}_{TS}$ and $\tilde{f}\in \mathcal{M}_{S}$, where $\mathcal{M}_{TS}$ denotes the sheaf of germs of meromorphic sections of $TS$, while $\mathcal{M}_S$ is the sheaf of germs of meromorphic functions and $\mathcal{M}_S^1$ is the sheaf of meromorphic 1-forms on $S$.
\end{definition}

Let $\{(U_\alpha,z_\alpha)\}$ be an atlas for $S$ and $\nabla$ a meromorphic connection on $S$. By definition, there exists $\eta_\alpha\in \mathcal{M}_S^1(U_\alpha)$, such that
\[
\nabla(\partial_\alpha)=\eta_\alpha\otimes\partial_\alpha,
\]
where $\partial_\alpha:=\frac{\partial}{\partial z_\alpha}$ is the induced local generator of $TS$ over $U_\alpha$.

\begin{definition}
We say  that $\eta_\alpha$ is the \emph{local representation} of $\nabla$ on $U_\alpha$.\end{definition}

Let $\{\xi_{\alpha\beta}\}$ be the cocycle
representing the cohomology class $\xi\in H^1(S,\mathcal{O}^*)$ of $TS$ (hence $\xi_{\alpha\beta}=\frac{\partial z_\alpha}{\partial z_\beta}$). Over $U_\alpha\cap U_\beta$ we have
\[
\partial_\beta=\xi_{\alpha\beta}\partial_\alpha
\]
and thus
\[
\nabla(\partial_\beta)=\nabla(\xi_{\alpha\beta}\partial_\alpha)\quad\Longleftrightarrow\quad \eta_\beta\otimes \partial_\beta=\xi_{\alpha\beta}\eta_\alpha\otimes \partial_\alpha+d \xi_{\alpha\beta}\otimes \partial_\alpha,
\]
that happens if and only if
\begin{equation}\label{merrepresentation}
 \eta_\beta=\eta_\alpha+\frac{1}{\xi_{\alpha\beta}}\partial \xi_{\alpha\beta}
\end{equation}
on $U_\alpha\cap U_\beta$. In particular, $\eta_\alpha$ and $\eta_\beta$ have the same poles on $U_\alpha\cap U_\beta$.

\begin{definition}\label{poles}
A point $p\in U_\alpha$ is a \emph{pole} of the meromorphic connection~$\nabla$ if it is a pole of a local representation~$\eta_\alpha$ of~$\nabla$ on~$U_\alpha$.
We shall denote by $\operatorname{Sing}(\nabla)\subset S$ the set of poles of~$\nabla$ and by $S^o=S\setminus\operatorname{Sing}(\nabla)$ the set of regular points of~$\nabla$. We shall say that $\nabla$ is a \emph{holomorphic connection} if it has no poles. Clearly, the restriction of $\nabla$ to~$S^o$ is holomorphic.
\end{definition}

Let $\nabla$ be a holomorphic connection (e.g., the restriction of a meromorphic connection to the set of its regular points). 
Up to shrinking, if necessary, the open sets $U_\alpha$, we can find holomorphic functions $K_\alpha \in \mathcal{O}(U_\alpha)$ such that $\eta_\alpha = \partial K_\alpha$ on $U_\alpha$. Then \eqref{merrepresentation} implies that on $U_\alpha$ and $U_\beta$:
\begin{equation}\label{mer2}
  \hat{\xi}_{\alpha\beta}=\frac{\exp(K_\alpha)}{\exp(K_\beta)}\xi_{\alpha\beta}.
\end{equation}
is a complex non-zero constant defining a cocycle $\hat\xi\in H^1(S,\mathbb{C}^*)$ representing $\xi$. 

\begin{definition}
The homomorphism $\rho\colon\pi_1(S)\to\mathbb{C}^*$ corresponding to the class $\hat{\xi}$ under the canonical isomorphism 
\[
H^1(S,\mathbb{C}^*)\cong\text{Hom}(H_1(S,\mathbb{Z}),\mathbb{C}^*)=\text{ Hom}(\pi_1(S),\mathbb{C}^*)
\]
is the \emph{monodromy representation} of the holomorphic connection $\nabla$. We say that $\nabla$ has \emph{monodromy in} a multiplicative subgroup $G$ of $\mathbb{C}^*$, or that $G$ is a \emph{monodromy group} for~$\nabla$, if the image of $\rho$ is contained within~$G$. In other words, $\hat{\xi}$ is the image of a class in $H^1(S,G)$ under the natural inclusion $G\hookrightarrow \mathbb{C}^*$. Furthermore, we say that $\nabla$ has  \emph{real periods} if it has monodromy in $S^1$.
 \end{definition}
  
 Let us now define the notion of geodesic for meromorphic connections.
 
\begin{definition}
A \textit{geodesic} for a meromorphic connection $\nabla$ on $TS$ is a real curve $\sigma\colon I\to S^o$, where $I\subseteq \mathbb{R}$ an interval, such that $\nabla_{\sigma'}\sigma'\equiv 0.$
\end{definition}

In the rest of the paper a geodesic and its support will be denoted by the same symbol if there is no risk of confusion.

Let  $\{(U_\alpha,z_\alpha)\}$ be an atlas for $S^0$ and  $\sigma\colon I\to U_\alpha$  a smooth curve. Then $\sigma$ is a geodesic for a meromorphic connection $\nabla$ if and only if
\begin{equation}\label{geodesicequation}
 (z_\alpha\circ\sigma)''+(f_\alpha\circ\sigma){(z_\alpha\circ\sigma)'}^2\equiv0
\end{equation}
where $\eta_\alpha=f_\alpha dz_\alpha$ is the local representation of $\nabla$ on $U_\alpha$.

\begin{definition} 
The \emph{residue} $\operatorname{Res}_p\nabla$ of a meromorphic connection $\nabla$ at a point $p\in S$ is the residue at $p$ of any local representation $\eta_\alpha$ of $\nabla$ on a local chart $(U_\alpha,z_\alpha)$. Thanks to \eqref{merrepresentation}, it does not depend on the chosen local representation.
\end{definition}

 In compact Riemann surfaces the sum of residues is the same for all meromorphic connections and it is given by $2g-2$, where $g$ is the genus of the surface (see, for example, \cite{IYa}).

 \begin{remark}\label{perres}
 If $\gamma\in H_1(S^o,\mathbb{Z})$ is represented by a small loop around a pole $p\in S$, then (see \cite[Proposition~3.6]{AT1})
 \[
 \rho(\gamma)=\exp\left(\int_\gamma \eta\right)\;,
 \]
 where $\eta$ is the local representation of $\nabla$ in a local chart at~$p$. In particular, since
 \[
 \operatorname{Res}_p\nabla=\frac{1}{2\pi i}\int_\gamma \eta\;,
 \]
 if $\nabla$ has real periods then all the residues of~$\nabla$ are real numbers. 
 \end{remark}

\begin{definition} 
We say that $p\in S$ is a \textit{Fuchsian} pole of a meromorphic connection $\nabla$ if for some (and hence any) chart $(U_\alpha,z_\alpha)$ around $p$ the local representation of $\nabla$ has a simple pole at $p$. If all poles of $\nabla$ are Fuchsian then we say that $\nabla$ is a \emph{Fuchsian meromorphic connection}. A Fuchsian pole is \emph{resonant} if its residue is a negative integer strictly less than~$-1$. 
\end{definition}

\subsection{Local isometries and $\nabla$-charts.}
In this subsection we recall some properties of local isometries and $\nabla$-charts; we shall often use these notions in the rest of the paper.

\begin{definition}\label{d:isometry}
Let $\nabla$ be a holomorphic connection on a Riemann surface $S$. Given a simply connected chart $(U_\alpha,z_\alpha)$ for $S$, let $\eta_\alpha$ be the local representation of $\nabla$ on $U_\alpha$ and let $K_\alpha\colon U_\alpha\to\mathbb{C}$ be a holomorphic primitive of $\eta_\alpha$. A \emph{local isometry} of $\nabla$ on $U_\alpha$ is a holomorphic primitive $J_\alpha:U_\alpha\to\mathbb{C}$ of $\exp(K_\alpha)$ on $U_\alpha$. 
 \end{definition}

\begin{proposition}\label{p:localis}
Let $\nabla$ be a holomorphic connection on a Riemann surface $S$. Let $\{(U_\alpha,z_\alpha)\}$ be an atlas for $S$ with simply connected charts. If $J_\alpha$ is a local isometry of $\nabla$ on $U_\alpha$ then
\begin{enumerate}
\item if $\eta_\alpha\equiv0$ then $J_\alpha=az_\alpha+b$ for some $a\in \mathbb{C}^*$ and $b\in\mathbb{C}$; 
\item if $\eta_\alpha\equiv 0$ and $(U_\beta,z_\beta)$ is another chart with $\eta_\beta\equiv0$ and $U_\alpha\cap U_\beta$ is connected, then there exists a local isometry $J_\beta$ of $\nabla$ on $U_\beta$ such that $J_\beta|_{U_\alpha\cap U_\beta}\equiv J_\alpha|_{U_\alpha\cap U_\beta}$;
\item if $\sigma$ is a geodesic for $\nabla$ on $U_\alpha$ then $J_\alpha(\sigma)$ is a Euclidean segment in $z_\alpha(U_\alpha)$;
\item if $\tilde{J}_\alpha$ is another local isometry of $\nabla$ on $U_\alpha$ then $\tilde{J}_\alpha= aJ_\alpha+b$ for some $a\in \mathbb{C}^*$ and $b\in\mathbb{C}$;
\end{enumerate}
\end{proposition}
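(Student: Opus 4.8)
The plan is to unwind the definition of a local isometry — that $J_\alpha$ is a primitive of $\exp(K_\alpha)$ with $K_\alpha$ a primitive of $\eta_\alpha$ — and to observe that each assertion then reduces to an elementary fact about primitives on the connected (indeed simply connected) set $U_\alpha$. I would dispose of (1) and (4) first. For (1), if $\eta_\alpha\equiv 0$ then $K_\alpha$ is constant, so $\exp(K_\alpha)\equiv a$ for some $a\in\mathbb{C}^*$, and integrating gives $J_\alpha=az_\alpha+b$. For (4), two primitives $K_\alpha,\tilde K_\alpha$ of $\eta_\alpha$ on the connected set $U_\alpha$ differ by an additive constant $c$, so $\tilde J_\alpha'=\exp(\tilde K_\alpha)=e^c\exp(K_\alpha)=e^cJ_\alpha'$; hence $\tilde J_\alpha-e^cJ_\alpha$ has vanishing derivative and equals a constant $b$, giving $\tilde J_\alpha=aJ_\alpha+b$ with $a=e^c\in\mathbb{C}^*$. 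This is where the freedom of choosing $K_\alpha$ up to a constant produces the multiplicative factor $a$, as opposed to a mere translation.

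For (3), the geometric heart, I would set $\zeta=z_\alpha\circ\sigma$ and $h=J_\alpha\circ\sigma$ and compute $h''$ by the chain rule. Using $J_\alpha'=\exp(K_\alpha)$ and $K_\alpha'=f_\alpha$ one gets $h'=\exp(K_\alpha(\zeta))\,\zeta'$ and then $h''=\exp(K_\alpha(\zeta))\big[\zeta''+f_\alpha(\zeta)(\zeta')^2\big]$. The bracket is exactly the left-hand side of the geodesic equation \eqref{geodesicequation}, so $h''\equiv 0$; thus $h$ is affine in the real parameter and $J_\alpha\circ\sigma$ traces a Euclidean segment. This single computation is the whole point of the definition: the integrating factor $\exp(K_\alpha)$ is chosen precisely to cancel the nonlinear term of the geodesic equation.

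Finally, (2) is where the cocycle relation \eqref{merrepresentation} enters. With $\eta_\alpha\equiv\eta_\beta\equiv 0$, \eqref{merrepresentation} forces $\partial\xi_{\alpha\beta}\equiv 0$ on $U_\alpha\cap U_\beta$; since this overlap is connected, $\xi_{\alpha\beta}=\partial z_\alpha/\partial z_\beta$ is a nonzero constant $k$, so $z_\alpha=kz_\beta+c$ there for some $c\in\mathbb{C}$. By (1) write $J_\alpha=az_\alpha+b$ on $U_\alpha$; restricting to the overlap gives $J_\alpha=ak\,z_\beta+(ac+b)$. Since $\eta_\beta\equiv 0$, the same computation underlying (1) shows that every affine function of $z_\beta$ with nonzero slope arises as a local isometry on $U_\beta$, so $J_\beta:=ak\,z_\beta+(ac+b)$ is a local isometry of $\nabla$ on all of $U_\beta$ agreeing with $J_\alpha$ on $U_\alpha\cap U_\beta$. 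None of the steps is genuinely hard: the only real computation is the chain-rule identity in (3), and the only delicate point is keeping track in (2) of which constants are free — here the slope $ak$ is automatically nonzero, so no compatibility obstruction to the gluing arises.
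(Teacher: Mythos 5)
Your proof is correct, and for parts (1) and (4) it coincides with the paper's argument verbatim: constancy of $K_\alpha$ gives the affine form in (1), and in (4) two primitives of $\eta_\alpha$ on the connected set $U_\alpha$ differ by a constant $c$, producing $\tilde J_\alpha'=e^cJ_\alpha'$ and hence $\tilde J_\alpha=aJ_\alpha+b$ with $a=e^c$. You diverge in two places, both to your advantage. For (3) the paper simply cites \cite[Proposition 2.2]{AB}, whereas you carry out the chain-rule computation $h''=\exp\bigl(K_\alpha\circ\zeta\bigr)\bigl[\zeta''+(f_\alpha\circ\zeta)(\zeta')^2\bigr]\equiv0$ directly; this is precisely the content of the cited result, so your write-up is self-contained at no extra cost. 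For (2) the paper chooses the constant $K_\beta$ to ``coincide with $K_\alpha$'' on the overlap and then normalizes $J_\beta$ at a single point; read literally, this elides the Jacobian factor: with $K_\beta=K_\alpha$ one has $dJ_\alpha=e^{K_\alpha}\,dz_\alpha=e^{K_\alpha}\xi_{\alpha\beta}\,dz_\beta$ while $dJ_\beta=e^{K_\alpha}\,dz_\beta$, so matching at one point yields agreement on the overlap only when $\xi_{\alpha\beta}=1$. Your route --- deducing from \eqref{merrepresentation} that $\xi_{\alpha\beta}$ is a nonzero constant $k$ on the connected overlap, so $z_\alpha=kz_\beta+c$ there, writing $J_\alpha=ak\,z_\beta+(ac+b)$ and extending it to $U_\beta$ as an affine local isometry (legitimate because $e^{K_\beta}$ ranges over all of $\mathbb{C}^*$ as the constant $K_\beta$ varies) --- absorbs that factor explicitly into the slope $ak\ne0$ and is the more careful rendering of the intended argument. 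No gaps.
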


\begin{proof}
(1) If $\eta_\alpha\equiv0$ then $K_\alpha$ is constant and hence $J_\alpha=az_\alpha+b$ for some $a\in \mathbb{C}^*$ and $b\in\mathbb{C}$. 

(2) Fix $K_\alpha\in\mathbb{C}$ such that $J_\alpha$ is a holomorphic primitive of $\exp(K_\alpha)$. Since $\eta_\beta\equiv0$, we can choose $K_\beta\in\mathbb{C}$ so that it coincides with $K_\alpha$ on the connected set $U_\alpha\cap U_\beta$; then choose
$J_\beta$ so that it coincides with $J_\alpha$ in one point of $U_\alpha\cap U_\beta$. It follows that $J_\beta|_{U_\alpha\cap U_\beta}\equiv J_\alpha|_{U_\alpha\cap U_\beta}$.

(3) See \cite[Proposition 2.2]{AB}.

(4) It is enough to show that $\tilde{J}_\alpha'=a{J}_\alpha'$ with $a\in \mathbb{C}^*$. By definition $\tilde{J}_\alpha'=\exp{\tilde{K}_\alpha}$, where $\tilde{K}_\alpha$ is a holomorphic primitive of $\eta_\alpha$. Since $K_\alpha$ too is a holomorphic primitive of $\eta_\alpha$ there exists $c\in \mathbb{C}$ such that $\tilde{K}_\alpha={K}_\alpha+c$. So $\tilde{J}_\alpha'=\exp{\tilde{K}_\alpha}=e^c\exp{K_\alpha}=e^c {J}_\alpha'$.
\end{proof}

\begin{definition}\label{nablaatlasdef} 
Let $\nabla$ be a holomorphic connection on a Riemann surface $S$.
A simply connected chart $(U_\alpha,z_\alpha)$  is said to be a $\nabla-$\emph{chart} if the local representation of $\nabla$ on $U_\alpha$ is identically zero.
A  \emph{$\nabla-$atlas} is an atlas $\{(U_\alpha,z_\alpha)\}$ for $S$ such that all charts are $\nabla-$charts. A \emph{Leray atlas} for $S$  is a simply connected atlas $\{(U_\alpha,z_\alpha)\}$ such that intersection of any two charts of the atlas is  simply connected or empty. 
 \end{definition}

In \cite{R1} it is shown that it is always possible to find a Leray $\nabla$-atlas for any Riemann surface $S$ with a holomorphic connection $\nabla$. Proposition \ref{p:localis} imply that the representation of geodesics on $\nabla$-charts is given by Euclidean segments.

\begin{lemma}[{See \cite[Lemma 2.8]{R1}}] \label{monodromy}
Let $\nabla$ be a holomorphic connection on a Riemann surface $S$ having monodromy in $G$, a multiplicative subgroup of $\mathbb{C}^*$. Then there exists a Leray $\nabla-$atlas $\{(U_\alpha,z_\alpha)\}$ for $S$ such that the changes of coordinates in the atlas have the form $z_\beta= a_{\alpha\beta} z_\alpha+c_{\alpha\beta}$ on $U_\alpha\cap U_\beta$, where $a_{\alpha\beta}\in G$ and $c_{\alpha\beta}\in\mathbb{C}$. 
\end{lemma}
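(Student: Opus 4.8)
The plan is to start from a Leray $\nabla$-atlas, observe that on such an atlas the transition functions are automatically affine, and then use the monodromy hypothesis to rescale the coordinates so that the linear parts land in $G$. By the result of \cite{R1} recalled just above, I would begin with some Leray $\nabla$-atlas $\{(V_\alpha,w_\alpha)\}$ for $S$ and first check that its transition maps are affine. On every $V_\alpha$ the local representation vanishes, $\eta_\alpha\equiv0$, so feeding $\eta_\alpha\equiv\eta_\beta\equiv0$ into the transformation rule \eqref{merrepresentation} yields $\frac{1}{\xi_{\alpha\beta}}\partial\xi_{\alpha\beta}=0$ on each nonempty $V_\alpha\cap V_\beta$, hence $\partial\xi_{\alpha\beta}=0$. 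Since $\xi_{\alpha\beta}=\partial w_\alpha/\partial w_\beta$ is holomorphic this forces $\xi_{\alpha\beta}$ to be locally constant, and the Leray property makes $V_\alpha\cap V_\beta$ connected, so $\xi_{\alpha\beta}$ is a single constant; integrating $\partial w_\alpha/\partial w_\beta=\text{const}$ shows each transition map is affine. This already gives the affine shape; only the location of the linear parts remains.

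The key step is to identify the transition constants with the monodromy. On a $\nabla$-chart the vanishing representation $\eta_\alpha\equiv0$ admits the primitive $K_\alpha\equiv0$, so $\exp(K_\alpha)=1$ and \eqref{mer2} reduces to $\hat\xi_{\alpha\beta}=\xi_{\alpha\beta}$. Thus the constants $\{\xi_{\alpha\beta}\}$ form precisely a cocycle representing the monodromy class $\hat\xi\in H^1(S,\mathbb{C}^*)$. Now I would invoke the hypothesis that $\nabla$ has monodromy in $G$, meaning $\hat\xi$ is the image of a class in $H^1(S,G)$; after passing if necessary to a common Leray refinement (which is again a Leray $\nabla$-atlas, since restricting a $\nabla$-chart to a smaller simply connected open set keeps it a $\nabla$-chart), this translates into the existence of constants $\lambda_\alpha\in\mathbb{C}^*$ and $g_{\alpha\beta}\in G$ with $\xi_{\alpha\beta}=\frac{\lambda_\alpha}{\lambda_\beta}\,g_{\alpha\beta}$ on every nonempty overlap.

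Finally I would rescale the coordinates by setting $z_\alpha:=\lambda_\alpha^{-1}w_\alpha$. This is an affine change of coordinate, so each $(V_\alpha,z_\alpha)$ remains a $\nabla$-chart by Proposition \ref{p:localis}(1), and the Leray property is untouched because the domains $V_\alpha$ are unchanged. The new cocycle is $\partial z_\alpha/\partial z_\beta=\frac{\lambda_\beta}{\lambda_\alpha}\xi_{\alpha\beta}=g_{\alpha\beta}\in G$, whence the transition maps take the form $z_\beta=a_{\alpha\beta}z_\alpha+c_{\alpha\beta}$ with $a_{\alpha\beta}=g_{\alpha\beta}^{-1}\in G$ and $c_{\alpha\beta}\in\mathbb{C}$, as desired. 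The genuinely substantive point is the middle paragraph: recognizing that the transition constants of a $\nabla$-atlas literally represent the monodromy class, and handling the \v Cech bookkeeping (the coboundary $\lambda_\alpha/\lambda_\beta$ together with the refinement needed to realize the $H^1(S,G)$-class on our atlas) so that the abstract monodromy-in-$G$ hypothesis becomes a concrete coordinate rescaling. By contrast, the affineness of the transitions and the stability of the Leray and $\nabla$-chart properties under affine rescaling are routine.
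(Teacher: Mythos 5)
The paper itself gives no proof of this lemma—it is quoted from \cite{R1}—but your argument is correct and is precisely the one the paper's Section~\ref{holomorphic connection} machinery anticipates: on a Leray $\nabla$-atlas the transitions are affine by \eqref{merrepresentation}, their linear parts are (taking $K_\alpha\equiv0$ in \eqref{mer2}) exactly a cocycle representing the monodromy class $\hat\xi$, and the monodromy-in-$G$ hypothesis plus the constant rescaling $z_\alpha=\lambda_\alpha^{-1}w_\alpha$ moves the linear parts into $G$ while preserving both the $\nabla$-chart and Leray properties. The one step you pass over slightly quickly is that the common refinement must be chosen so that it is again Leray (pairwise intersections simply connected or empty), which is not automatic for an arbitrary refinement but is standard—refine by convex balls of a background metric, each contained in a $\nabla$-chart—so the proof stands as written.
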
 

\begin{definition} \label{def:G-atlas}
    A Leary $\nabla$-atlas with the property described in the previous lemma will be called a \emph{Leray $G$-atlas} for~$\nabla$. 
\end{definition}

\subsection{Poincar\`e-Bendixson theorems}\label{secpoincarebend}
In this subsection we recall a Poincar\`e-Bendixson theorem for meromorphic connections on compact Riemann surfaces, giving a classification of the possible $\omega$-limit sets for the geodesics of meromorphic connections on compact Riemann surfaces. 

\begin{definition}
Let $\sigma\colon(\varepsilon_-,\varepsilon_+)\to S$ be a curve in a Riemann surface $S$. Then the \emph{$\omega$-limit set} of~$\sigma$ is given by the points $p\in S$ such that there exists a sequence $\{t_n\}$, with $t_n\uparrow \varepsilon_+$, such that $\sigma(t_n)\to p$. 
\end{definition}

\begin{definition}
 A geodesic $\sigma\colon[0,l]\to S$ is \emph{closed} if $\sigma(l)=\sigma(0)$ and $\sigma'(l)$ is a positive multiple of~$\sigma'(0)$; it is \emph{periodic} if $\sigma(l)=\sigma(0)$  and $\sigma'(l)=\sigma'(0)$.
\end{definition}

\begin{definition}
 A \emph{saddle connection} for a meromorphic connection $\nabla$ on  $S$ is a maximal geodesic $\sigma\colon(\varepsilon_{-},\varepsilon_{+}) \to S^o$, with $\varepsilon_{-}\in [-\infty,0)$ and $\varepsilon_{+}\in (0,+\infty]$, such that $\sigma(t)$ tends to a pole of $\nabla$ both when $t\uparrow \varepsilon_{+}$ and when $t\downarrow\varepsilon_{-}$.

A \emph{graph of saddle connections} is a connected  graph in $S$ whose vertices are poles and whose arcs are disjoint simple (i.e., not self-intersecting) saddle connections. A \emph{spike} is a saddle connection of a graph which does not belong to any cycle of the graph.

A \emph{boundary graph of saddle connections}  is a graph of saddle connections which is also the boundary of a connected open subset of $S$. A boundary graph is
\emph{disconnecting} if its complement in $S$ is not connected.
\end{definition}

\begin{definition}\label{d:alldef}
Let $\nabla$ be a meromorphic connection on a compact Riemann surface  $S$. 
\begin{itemize}
\item  A \emph{geodesic (n-)cycle} is the union of $n$ simple smooth curves $\sigma_j\colon[0,1]\to S$ such that each restriction $\sigma_j|_{(0,1)}$ is a geodesic for~$\nabla$ and their supports are disjoint except for the conditions $\sigma_j(0)=\sigma_{j-1}(1)$ for  $j=2,...,n$ and $\sigma_1(0)=\sigma_n(1)$. The points $\sigma_j(0)$ are the \emph{vertices} of the geodesic cycle. We say that a geodesic cycle is \emph{regular} if all vertices of the geodesic cycle are regular points. We say a geodesic cycle is \emph{Fuchsian} if any vertex the geodesic cycle is either a Fuchsian pole or a regular point.  A (\textit{Fuchsian}) \textit{geodesic polygon} is a connected open set whose boundary is a (Fuchsian) geodesic cycle.
\item  A \emph{geodesic (m-)multicurve} is a union of $m$ disjoint geodesic cycles. A geodesic multicurve will be said to be \emph{disconnecting} if it disconnects $S$, \emph{non-disconnecting } otherwise. We say that a geodesic multicurve is \emph{Fuchsian} (\emph{regular}) if it is a union of $m$ disjoint Fuchsian (regular) geodesic cycles.
\item A \emph{part} $P$ is the closure of a connected open subset of $S$ whose boundary is a multicurve $\gamma$. A component $\sigma$ of $\gamma$ is \emph{surrounded }if the interior of $P$ contains both sides of a tubular neighbourhood of $\sigma$ in $S$; it is \emph{free} otherwise. The \emph{filling} $\widetilde{P}$ of a part $P$ is the compact surface obtained by gluing a disk along each of the free components of $\gamma$ and not removing any of the surrounded components of $\gamma$.
\item A set $W\subset S$ with $\mathring W=\emptyset$ is a \textit{transversally Cantor-like geodesic set} if the following  conditions holds:
  \begin{enumerate}
        \item there exists a maximal non self-intersecting geodesic $\sigma\colon(\varepsilon_-,\varepsilon_+)\to S^o$ such that $W$ is the closure of the support of~$\sigma$;
         \item for any simple geodesic $\gamma\colon(-\delta,\delta)\to S^0$ transverse to $\sigma$ the intersection $\gamma([-\delta/2,\delta/2])\cap W$ is a perfect totally disconnected set (a Cantor set).
  \end{enumerate}
\end{itemize}
\end{definition}

Now we state the Poincar\'e-Bendixson theorem for meromorphic connections on a compact Riemann surface $S$ proved in \cite[Theorem 4.6]{AT1} and \cite[Theorem 4.3]{AB}, with the missing case included (see Appendix~\ref{s:appA}).

 \begin{theorem}[{Abate-Bianchi-Tovena}]\label{t3}
Let $\sigma\colon[0,\varepsilon)\to S^o$ be a maximal geodesic for a meromorphic connection $\nabla$ on $S$, where
$S^o=S\setminus\{p_0,p_1,\dots,p_r\}$ and $p_0,p_1,\dots,p_r$ are the poles of $\nabla$. Then either
\begin{enumerate}

  \item $\sigma(t)$ tends to a pole of $\nabla$ as $t\to\varepsilon$; or
  \item $\sigma$ is closed; or
  \item the $\omega$-limit set of $\sigma$ is given by the support of a closed geodesic; or
  \item the $\omega$-limit set of $\sigma$ is a boundary graph of saddle connections; or
  \item the $\omega$-limit set of $\sigma$ is a transversally Cantor-like geodesic set; or
  \item the $\omega$-limit set of $\sigma$ is all of $S$; or
  \item the $\omega$-limit set of $\sigma$ has non-empty interior and non-empty boundary and each component of its boundary is a graph of saddle connections with no spikes and at least one pole; or
  \item  $\sigma$ intersects itself infinitely many times.
\end{enumerate}
 Furthermore, in cases $(3)$ or $(4)$ the support of $\sigma$ is contained in only one of the components of the complement of its $\omega$-limit set, which is a part $P$ of $S$ having the $\omega$-limit set as boundary.
\end{theorem}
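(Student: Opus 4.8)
The plan is to exploit the fact that, by Proposition~\ref{p:localis}(3), in the coordinates furnished by a local isometry every geodesic of $\nabla$ is a Euclidean segment; hence on $S^o$ the geodesics behave as the leaves of an orientable singular foliation (equivalently, as the orbits of a local flow) whose singularities sit at the poles $p_0,\dots,p_r$. The whole statement is then a Poincaré--Bendixson-type classification for such a flow on the compact orientable surface $S$, and I would organize it around the transversal machinery of flows on surfaces adapted to this setting.

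First I would dispose of the degenerate alternatives. If $\sigma$ meets itself infinitely often we are in case~(8), so assume $\sigma|_{[t_0,\varepsilon)}$ is simple for some $t_0$. If $\sigma(t)$ converges to a pole as $t\to\varepsilon$ we are in case~(1); otherwise $\omega(\sigma)$ is a non-empty compact subset of $S$ saturated by geodesics, since through each of its regular points passes an entire geodesic contained in $\omega(\sigma)$. Using a local isometry I would build, around any regular point $q\in\omega(\sigma)$, a flow box together with a transversal arc $\tau$ (a short geodesic arc transverse to the foliation). Because $q$ is an $\omega$-limit point, $\sigma$ crosses $\tau$ infinitely often, always in the same transverse direction; concatenating the piece of $\sigma$ between two consecutive crossings with the sub-arc of $\tau$ joining them yields a closed curve which, by orientability of $S$, forces the successive crossing points to be monotone along $\tau$. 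This local rigidity replaces the planar Poincaré--Bendixson loop.

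The monotonicity then drives the classification. If $\sigma$ returns to a previous point with the same direction it is periodic (case~2); if the crossing points converge to a point whose geodesic is closed, then $\omega(\sigma)$ is that closed geodesic (case~3). Otherwise $\sigma$ is a non-closed recurrent geodesic and $\omega(\sigma)$ is a quasi-minimal set, and the dichotomy is on its interior. If $\mathring{\omega(\sigma)}=\emptyset$, the monotone crossing structure on each transversal yields a perfect, totally disconnected section, giving the transversally Cantor-like set of case~(5). If $\mathring{\omega(\sigma)}\neq\emptyset$, then either $\omega(\sigma)=S$ (case~6) or its boundary is non-empty, and each boundary component is a union of geodesics that, being unable to accumulate a regular point from the interior side, must run into poles, i.e.\ be saddle connections assembling into a boundary graph; a spike cannot occur, since it would be a free arc having the interior of $\omega(\sigma)$ on both sides, and the graph must contain a pole, else the component would be a single smooth closed geodesic, putting us back in case~(3). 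This produces cases~(4) and~(7). The ``furthermore'' clause follows from the same monotonicity: a closed geodesic or boundary graph locally separates $S$, and the monotone approach confines the support of $\sigma$ to a single complementary component, whose closure is the part $P$ bounded by $\omega(\sigma)$.

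The main obstacle I anticipate is twofold. First, the behaviour of geodesics near the poles must be controlled precisely enough to guarantee that the boundary components of a limit set with interior genuinely terminate at poles and close up into saddle connections, rather than spiralling indefinitely near a singularity; this is exactly where local models at the poles are indispensable, and it is the most delicate analytic input. Second, the quasi-minimal case demands a careful Maier-type argument to show that a non-closed recurrent orbit with empty-interior closure meets every transversal in a Cantor set, and to exclude intermediate pathologies; establishing perfectness and total disconnectedness simultaneously and uniformly over transversals is the technical heart of case~(5).
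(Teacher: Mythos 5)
Your proposal attempts a from-scratch proof, but it contains a genuine gap at its pivot point. The claim that concatenating an arc of $\sigma$ between two consecutive crossings of a transversal $\tau$ with the sub-arc of $\tau$ joining them ``forces, by orientability of $S$, the successive crossing points to be monotone along $\tau$'' is false on any surface of positive genus: that closed curve need not separate $S$, and orientability alone gives no Jordan-curve argument. The standard counterexample lives inside this very theorem: for a holomorphic connection on a torus ($\eta_\alpha\equiv0$ in a $\nabla$-atlas) the geodesics are the orbits of a linear flow, and for irrational slope the successive crossings of a short transversal are ordered like the returns of an irrational rotation --- dense and non-monotone --- which is exactly how case (6), $\omega(\sigma)=S$, arises. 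Since your entire classification is ``driven'' by this monotonicity, the argument collapses precisely in the cases that distinguish this theorem from the planar Poincar\'e--Bendixson theorem. A second, related gap: Theorem~\ref{t3} assumes nothing about the monodromy, so the reduction of geodesics to ``leaves of an orientable singular foliation'' is unjustified. The transition functions multiply local directions by arbitrary elements of $\mathbb{C}^*$ (cf.\ \eqref{argdatenglik}, where directions are only well defined in $S^1/G_1$, and only after fixing a Leray $G$-atlas), so geodesics of a fixed direction do not foliate $S^o$, a geodesic may recross a fixed transversal with drifting angles (e.g.\ spiralling around poles with non-real residues, as in \cite{AT1}), and your assertion that $\sigma$ crosses $\tau$ ``always in the same transverse direction'' has no justification. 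Controlling exactly this phenomenon is a substantial part of the work in \cite{AT1} and \cite{AB}, not a formality.

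For comparison: the paper does not reprove this statement. Theorem~\ref{t3} is quoted from \cite[Theorem 4.6]{AT1} and \cite[Theorem 4.3]{AB}, and the paper's only new contribution is Lemma~\ref{l:Acl} in Appendix~\ref{s:appA}, which supplies the case those proofs missed, namely $\operatorname{supp}(\sigma)\subseteq W$ with $\mathring W=\emptyset$, yielding case (5). Your sketch of case (5) is in fact close in spirit to that lemma --- an isolated point of $\gamma\cap W$ forces $\sigma$ to be closed, while a nondegenerate interval in $\gamma\cap W$ forces $\mathring W\neq\emptyset$ via \cite[Proposition 4.1]{AB}, so the section is perfect and totally disconnected --- and that short argument replaces the ``careful Maier-type argument'' you defer. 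But you present this as the technical heart while leaving it undone, and the remaining cases (in particular the saddle-connection structure of boundary components in case (7), for which you give only a plausibility argument about spikes) would require importing the full machinery of \cite{AT1,AB} rather than the flow-box monotonicity you propose. Two smaller inaccuracies: case (2) of the theorem is ``closed'' ($\sigma'(l)$ a positive multiple of $\sigma'(0)$), which for a general connection is strictly weaker than ``periodic'', and your treatment conflates the two; and case (3) concerns closed, not necessarily periodic, limit geodesics --- a distinction the paper later shows disappears only under the real-periods hypothesis (Corollary~\ref{l:closedperiodic}), which Theorem~\ref{t3} does not make.
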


 \begin{remark}\label{r:cls}
As anticipated in the introduction, case (5) has been missed in \cite[Theorem 4.3]{AB}. It appears when the $\omega$-limit set $W$ of $\sigma$ has an empty interior and 
$\mathrm{supp}(\sigma)\subseteq W$, a case not considered in the proof of \cite[Theorem 4.3]{AB}; see Appendix~\ref{s:appA}. 
\end{remark} 

\begin{remark}
All cases listed in Theorem \ref{t3} can be realized: see \cite{AB,AT1,DFG,NST}. 
\end{remark}

\subsection{Singular flat metrics, $k-$differentials and meromorphic connections}
In this section we recall the relationships among meromorphic connections, singular flat metrics and  meromorphic $k$-differentials (see \cite{AT1, R1}).

\begin{definition}
Let $S$ be a Riemann surface and $\Sigma\subset S$ a discrete set having no limit points in $S$. Set $S^o:=S\setminus\Sigma$. We say that $g$ is a \emph{singular flat metric} on $S$ if $g$ is a flat metric on $S^o$ and  for any $p\in\Sigma$ there exist $\rho_p\in \mathbb{R}$ such that, for any chart $(U_\alpha,z_\alpha)$ centered at $p$ with $U_\alpha\cap \Sigma=\{p\}$, writing $g^{\frac{1}{2}}=e^{u_\alpha}|dz_\alpha|$ on $U_\alpha\setminus\{p\}$ the function $e^{u_\alpha}$ satisfies 
\[
\lim\limits_{z_\alpha\to 0}\frac{e^{u_\alpha}}{|z_\alpha|^{\rho_p}}>0\;.
\]
Notice that saying that $g$ is flat is equivalent to saying that $u_\alpha\colon U_\alpha\setminus\{0\}\to\mathbb{R}$ is harmonic. We say that $\rho_p$ is the \emph{residue} of $g$ at the \emph{singular point} $p$ and that $\Sigma$ is the \emph{singular set} of $g$.
\end{definition}

\begin{definition} 
Let $\nabla$ be a meromorphic connection on a Riemann surface $S$ and $g$ a singular flat metric  on $S$.  We say that $g$ and $\nabla$ are \emph{adapted} to each other if they have the same singular sets and the same geodesics (intended as parametrized curves).
\end{definition}

It turns out that a meromorphic connection is adapted to a singular flat metric if and only if it is Fuchsian with real periods.

\begin{theorem}[{\cite[Proposition 1.2]{AT1}, \cite[Theorem 3.8]{R1})}]\label{t8}
Let $\nabla$ be a Fuchsian meromorphic connection on a Riemann surface $S$ and let $\Sigma$ be the set of poles of $\nabla$. Set $S^o=S\setminus\Sigma$. If $\nabla$ has real periods then there exists a singular flat metric $g$ adapted to $\nabla$. Moreover, $g$ is unique up to a positive constant multiple and the local isometries of $\nabla$ are isometries between (a multiple of)~$g$ and the Euclidean metric.

Conversely, if $g$ is a singular flat metric on $S$ with singular set $\Sigma$ then there exists a unique meromorphic connection $\nabla$ with $\Sigma$ as set  of poles such that $g$ is adapted to $\nabla$. Moreover, $\nabla$ is Fuchsian with real periods.

Furthermore, if $\rho_p$ is the residue of a singular point $p$ of $g$ then $\operatorname{Res}_p\nabla=\rho_p$ and vice versa.
\end{theorem}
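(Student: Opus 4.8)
The plan is to prove both implications by exploiting local isometries and the Leray $S^1$-atlas provided by Lemma~\ref{monodromy}, the guiding principle being that ``real periods'' is exactly the condition under which the Euclidean metric pulled back by local isometries is single-valued. For the direct implication, since $\nabla$ restricted to $S^o$ is holomorphic with monodromy in $S^1$, I would fix a Leray $S^1$-atlas $\{(U_\alpha,z_\alpha)\}$ for $\nabla$ on $S^o$, whose transition maps have the form $z_\beta=a_{\alpha\beta}z_\alpha+c_{\alpha\beta}$ with $|a_{\alpha\beta}|=1$, and set $g:=|dz_\alpha|^2$ on each chart. These local expressions patch because $|dz_\beta|=|a_{\alpha\beta}|\,|dz_\alpha|=|dz_\alpha|$, so $g$ is a well-defined flat metric on $S^o$; equivalently, in an arbitrary chart $g=|J_\alpha'|^2|dz_\alpha|^2$ is the pullback of the Euclidean metric by a local isometry $J_\alpha$, and the patching of two such expressions is governed precisely by $|\hat\xi_{\alpha\beta}|=1$, i.e.\ by $\hat\xi\in H^1(S^o,S^1)$. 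This makes clear both that local isometries are isometries onto (a multiple of) the Euclidean plane and that real periods are exactly what is needed. Because the representation of $\nabla$ vanishes in these charts, the geodesic equation \eqref{geodesicequation} reduces to $(z_\alpha\circ\sigma)''\equiv0$, so $\nabla$-geodesics are affinely parametrized Euclidean segments; these are also the constant-speed $g$-geodesics, so $g$ and $\nabla$ share geodesics as parametrized curves.

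Next I would analyze the singular points. Near a Fuchsian pole $p$ with residue $\rho:=\operatorname{Res}_p\nabla$, writing $\eta=(\rho/z+\text{hol})\,dz$ and $K=\int\eta=\rho\log z+h(z)$ with $h$ holomorphic, the metric becomes $g^{1/2}=|\exp K|\,|dz|=|z|^{\rho}e^{\operatorname{Re}h}|dz|$, whence $e^{u}/|z|^{\rho}\to e^{\operatorname{Re}h(0)}>0$; note that $\rho\in\mathbb{R}$ by Remark~\ref{perres}, which is exactly what makes $|z|^{2\rho}$ single-valued around $p$. Thus $g$ is a singular flat metric with $\rho_p=\rho=\operatorname{Res}_p\nabla$. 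For uniqueness up to a positive constant, if $g_1=e^{2u_1}|dz|^2$ and $g_2=e^{2u_2}|dz|^2$ are both adapted to $\nabla$, then in a $\nabla$-chart each must have all the segments $z_0+tv$ as constant-speed geodesics; the conformal geodesic equations force $u_1,u_2$ to be constant, so $g_1$ and $g_2$ are constant multiples of $|dz|^2$, and connectedness of $S^o$ makes the ratio globally constant.

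For the converse, I would use that a flat conformal metric on the oriented surface $S^o$ admits flat (developing) charts $w_\alpha$ with $g=|dw_\alpha|^2$, and that a holomorphic transition with $|dw_\beta|\equiv|dw_\alpha|$ is forced to be $w_\beta=aw_\alpha+b$ with $|a|=1$; hence these charts form a Leray $S^1$-atlas, and declaring their local representations to be $0$ defines a holomorphic connection $\nabla$ on $S^o$ (the transformation rule \eqref{merrepresentation} holds since $\partial\xi_{\alpha\beta}=0$). To extend $\nabla$ across a singular point $p$, I would observe that $u-\rho_p\log|z|$ is bounded harmonic near $p$, hence extends and equals $\operatorname{Re}h$ for some holomorphic $h$, so the developing coordinate satisfies $w'=z^{\rho_p}e^{h}$ and $\eta:=d\log w'=(\rho_p/z+h')\,dz$ is meromorphic with a simple pole and residue $\rho_p$; this $\eta$ agrees with the zero representation on $U\setminus\{p\}$ after the change of coordinates, so $\nabla$ extends to a Fuchsian meromorphic connection with $\operatorname{Res}_p\nabla=\rho_p$. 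Its monodromy cocycle reduces in flat charts to $\hat\xi_{\alpha\beta}=\xi_{\alpha\beta}=a_{\alpha\beta}\in S^1$, so $\nabla$ has real periods, and uniqueness follows exactly as before: any connection adapted to $g$ must have vanishing representation in flat charts (plug the segments $w_0+tv$ into \eqref{geodesicequation}, forcing the local coefficient to vanish identically), hence equals $\nabla$ on $S^o$ and therefore on $S$.

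As for the main obstacle, I expect the delicate points to be (i) verifying that the metric of the direct implication is genuinely single-valued, which is where the real-period hypothesis enters twice, through $|\hat\xi_{\alpha\beta}|=1$ on overlaps and through $\rho_p\in\mathbb{R}$ at the poles; and (ii) the local analysis at the singularities, matching the growth $e^{u}\sim|z|^{\rho_p}$ with the residue and checking that the two descriptions of $\nabla$ glue across the puncture. The uniqueness statements, by contrast, reduce to the elementary rigidity fact that a conformal flat metric all of whose constant-speed geodesics are Euclidean segments must be a constant multiple of the Euclidean metric, which I would verify directly from the conformal geodesic equations.
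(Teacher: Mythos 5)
Your proposal is correct; note that the paper itself states Theorem~\ref{t8} without proof, importing it from \cite[Proposition 1.2]{AT1} and \cite[Theorem 3.8]{R1}, and your argument reconstructs exactly the route of those references and of the toolkit the paper recalls: the Leray $S^1$-atlas of Lemma~\ref{monodromy} with the patching condition $|\hat\xi_{\alpha\beta}|=1$ from \eqref{mer2}, local isometries as in Definition~\ref{d:isometry} and Proposition~\ref{p:localis}, the reality of residues from Remark~\ref{perres} making $|z|^{2\rho}$ single-valued at a Fuchsian pole, and the rigidity fact that a conformal metric whose constant-speed geodesics are the affinely parametrized segments must be a constant multiple of the Euclidean metric. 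All the delicate points you flag (single-valuedness of $g$, the growth $e^{u}\sim|z|^{\rho_p}$ matching the residue, the harmonic extension of $u-\rho_p\log|z|$ across the puncture, and the gluing of the two descriptions of $\nabla$ there) are handled correctly, so there is nothing to add.
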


\begin{remark}
As a consequence of Theorem \ref{t8}, if $p$ is a pole with real residue, we can always construct a singular flat metric adapted to $\nabla$ in a simply connected neighbourhood $U$ of $p$ not containing other poles, because $\nabla$ in $U$ has real periods (see Remark~\ref{perres}).   
\end{remark} 

The following lemma (see \cite[Corollary 4.5]{AT1} for the case $S\subseteq\mathbb C$) shows that if $\nabla$ has real periods then all closed geodesics are periodic. 

\begin{corollary}\label{l:closedperiodic}
 Let $\nabla$ be a Fuchsian meromorphic connection on a Riemann surface $S$ with real periods. Then any closed geodesic of $\nabla$ is periodic.
\end{corollary}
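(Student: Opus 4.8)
The plan is to exploit the defining difference between a closed and a periodic geodesic, namely the behavior of the velocity vector upon return. Let $\sigma\colon[0,l]\to S$ be a closed geodesic, so $\sigma(l)=\sigma(0)=:p$ and $\sigma'(l)=\lambda\,\sigma'(0)$ for some $\lambda>0$; I want to show $\lambda=1$, which forces $\sigma'(l)=\sigma'(0)$ and hence that $\sigma$ is periodic. The natural tool is a local isometry $J$ of the holomorphic connection $\nabla|_{S^o}$ (Definition~\ref{d:isometry}), which by Proposition~\ref{p:localis}(3) straightens geodesics into Euclidean segments. Since $S^o$ need not be simply connected, I would first cover the support of $\sigma$ by finitely many simply connected $\nabla$-charts from a Leray $S^1$-atlas (which exists by Theorem~\ref{t8}, since real periods means monodromy in $S^1$, together with Lemma~\ref{monodromy}), and analytically continue a local isometry $J_0$ along $\sigma$ from $t=0$ to $t=l$.

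The key step is to compute the effect of this analytic continuation. By Proposition~\ref{p:localis}(4), continuing $J$ across each chart overlap changes it by an affine map $w\mapsto a w+b$, and the composite continuation around the loop $\sigma$ is an affine map $J_l=\rho(\gamma)\,J_0+c$, where $\gamma$ is the homotopy class of $\sigma$ in $S^o$ and $\rho(\gamma)\in\mathbb{C}^*$ is the monodromy. Here is where the real-periods hypothesis enters decisively: since $\nabla$ has monodromy in $S^1$, we have $\rho(\gamma)\in S^1$, i.e. $\abs{\rho(\gamma)}=1$. On the other hand, the image of $\sigma$ under the (continued) isometry is a Euclidean segment traversed with some velocity; comparing the velocity $J_l'(\sigma(l))\,\sigma'(l)$ with $J_0'(\sigma(0))\,\sigma'(0)$ and using $\sigma'(l)=\lambda\,\sigma'(0)$ together with $J_l'=\rho(\gamma)\,J_0'$ shows that the Euclidean velocity is multiplied by $\lambda\,\rho(\gamma)$. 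Taking absolute values gives $\lambda\abs{\rho(\gamma)}=\lambda$ on one side, but the metric interpretation forces the Euclidean speed to be preserved, so $\lambda\,\abs{\rho(\gamma)}=1$, and with $\abs{\rho(\gamma)}=1$ we conclude $\lambda=1$.

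More conceptually, one can phrase this via the adapted singular flat metric $g$ of Theorem~\ref{t8}: the local isometries are isometries between $g$ and the Euclidean metric, so $g$ assigns a well-defined arclength along $\sigma$, and a closed geodesic has a finite $g$-length $L$. Parametrizing $\sigma$ by $g$-arclength, the return condition $\sigma'(l)=\lambda\sigma'(0)$ must respect the fact that $\sigma$ and its $g$-unit-speed reparametrization return to the same point; since the $g$-metric is monodromy-invariant precisely because the periods are real (the $S^1$ monodromy acts by Euclidean rotations, which preserve $\abs{\,\cdot\,}$), the tangent vector can only be rotated, not scaled, upon return, forcing $\lambda=1$.

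I expect the main obstacle to be bookkeeping the analytic continuation of the local isometry across the chart overlaps cleanly, in particular tracking the affine cocycle and isolating its linear part as exactly the monodromy $\rho(\gamma)$; the inequality $\abs{\rho(\gamma)}=1$ then does all the real work, so the crux is ensuring that the scaling factor $\lambda$ in the closedness condition is genuinely identified with $\abs{\rho(\gamma)}^{-1}$ and not conflated with a reparametrization ambiguity. Once the identification $\lambda=\abs{\rho(\gamma)}^{-1}$ is established rigorously, the real-periods hypothesis closes the argument immediately.
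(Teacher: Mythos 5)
Your proposal is correct, but your primary argument runs along a genuinely different route than the paper's. The paper's proof is a two-line reduction: by Theorem~\ref{t8}, the real-periods hypothesis produces a singular flat metric $g$ adapted to $\nabla$, so the geodesics of $\nabla$ coincide with those of $g$ as parametrized curves, and closed geodesics of a Riemannian metric are automatically periodic (Riemannian geodesics have constant speed, so the return multiple $\lambda$ must be $1$). Your third paragraph is essentially this proof in spirit; your main argument, however, is a direct monodromy computation that never invokes the adapted metric. You continue a local isometry $J$ along the loop using a Leray $S^1$-atlas, observe that the continued $J\circ\sigma$ is a single affine map $t\mapsto at+b$ on $[0,l]$ (each chart contributes an affine piece and consecutive branches agree on overlaps, so the derivative is literally constant), and compare endpoints: $dJ_l(\sigma'(l))=\rho(\gamma)\lambda\,dJ_0(\sigma'(0))$ gives $a=\lambda\,\rho(\gamma)\,a$, i.e.\ $\lambda\rho(\gamma)=1$; with $\rho(\gamma)\in S^1$ and $\lambda>0$ this forces $\lambda=1$ (and, as a bonus, $\rho(\gamma)=1$). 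Your identification of the linear part of the affine continuation cocycle with the monodromy is exactly right: $dJ_\alpha=e^{K_\alpha}dz_\alpha$ yields $dJ_\alpha=\hat\xi_{\alpha\beta}\,dJ_\beta$ by \eqref{mer2}, and the loop $\gamma$ correctly lives in $\pi_1(S^o)$, which is where the real-periods hypothesis applies. What your route buys: it is self-contained modulo Lemma~\ref{monodromy} and Proposition~\ref{p:localis}, it bypasses the existence statement for the adapted metric, and it actually proves the sharper identity $\lambda=\abs{\rho(\gamma)}^{-1}$ for an arbitrary meromorphic connection --- which explains why closed non-periodic geodesics do occur when periods fail to be real (cf.\ the examples in \cite{AT1}). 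What the paper's route buys is brevity, at the price of quoting Theorem~\ref{t8}. Two small corrections: the Leray $S^1$-atlas comes from Lemma~\ref{monodromy} applied to the restriction of $\nabla$ to $S^o$ (Theorem~\ref{t8} is not needed for the atlas), and your appeal to ``the metric interpretation'' to see that the Euclidean speed is preserved is superfluous --- constancy of the derivative of the single affine map $J\circ\sigma$ on $[0,l]$ already does it, with no metric input.
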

\begin{proof}
By Theorem~\ref{t8} (see \cite[Proposition~1.2]{AT1}) there is a singular flat metric $g$ adapted to $\nabla$. In particular, the geodesics of $\nabla$ 
are the same as the geodesics of $g$. Since closed geodesics of a Riemannian metric are necessarily periodic, the assertion follows. \end{proof}

Now we shall describe the relation between meromorphic $k$-differentials and meromorphic connections. A $k-$differential is a global meromorphic section of the line bundle $(T^*S)^{\otimes k}$. Meromorphic  $k$-differentials are studied by many authors (see for example \cite{BCG,Sch,ST,T}). It is not difficult to see that a $k$-differential $q$ is given locally as $q=q(z)dz^k$. Then it is possible to prove that there exists a singular flat metric $g$ locally given as $g^{\frac{1}{2}}=|q(z)|^{\frac{1}{k}}|dz|$. We say $g$ is \emph{adapted} to $q$. Similarly, a meromorphic connection $\nabla $ and $q$ are \emph{adapted} to each other if there exists a singular flat metric $g$ such that $g$ is adapted to $\nabla$ and $q$.

\begin{theorem}[{\cite[Theorem 1.2]{R1}}]\label{t6}
Let $\nabla$ be a Fuchsian meromorphic connection on a Riemann surface~$S$. If $\nabla$ has monodromy in $\mathbb{Z}_k:=\{\varepsilon\in S^1| \ \varepsilon^k=1\}$
then there is a meromorphic $k$-differential $q$ adapted to $\nabla$. Moreover, $q$ is  unique up to a non-zero constant multiple.

Conversely, if $q$ is a meromorphic $k$-differential on a Riemann surface $S$ then there exists a unique meromorphic connection $\nabla$ adapted to $q$. Moreover, $\nabla$ is Fuchsian  and it has monodromy in $\mathbb{Z}_k$.
\end{theorem}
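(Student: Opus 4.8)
The plan is to deduce both implications from the correspondence between Fuchsian connections with real periods and singular flat metrics (Theorem~\ref{t8}), exploiting the refined atlas provided by Lemma~\ref{monodromy} in the special case $G=\mathbb{Z}_k$. For the first direction, assume $\nabla$ is Fuchsian with monodromy in $\mathbb{Z}_k$. Since $\mathbb{Z}_k\subset S^1$, $\nabla$ has real periods, so Theorem~\ref{t8} supplies a singular flat metric $g$ adapted to $\nabla$ with $\operatorname{Res}_p\nabla=\rho_p$ at every pole $p$. I would first build $q$ on $S^o$: restricting $\nabla$ to $S^o$ gives a holomorphic connection with monodromy in $\mathbb{Z}_k$, so Lemma~\ref{monodromy} furnishes a Leray $\mathbb{Z}_k$-atlas $\{(U_\alpha,z_\alpha)\}$ of $S^o$ with transitions $z_\beta=a_{\alpha\beta}z_\alpha+c_{\alpha\beta}$, $a_{\alpha\beta}\in\mathbb{Z}_k$. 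Setting $q:=dz_\alpha^k$ in each chart is consistent, because $\xi_{\alpha\beta}=1/a_{\alpha\beta}$ and $a_{\alpha\beta}^k=1$ give $q_\beta=\xi_{\alpha\beta}^k q_\alpha=q_\alpha$; this produces a nowhere-vanishing holomorphic $k$-differential on $S^o$. In these charts $\eta_\alpha\equiv0$, so the local isometries are affine and $g^{1/2}=|dz_\alpha|=|q_\alpha|^{1/k}|dz_\alpha|$, showing that $g$ is adapted to $q$ on $S^o$.

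The crucial point is the meromorphic extension of $q$ across $\Sigma$. Around a pole $p$, monodromy in $\mathbb{Z}_k$ forces $\exp(2\pi i\rho_p)\in\mathbb{Z}_k$, i.e. $m:=k\rho_p\in\mathbb{Z}$ by Remark~\ref{perres}. Fixing a holomorphic coordinate $z$ centered at $p$, the function $f:=q/(z^m\,dz^k)$ is holomorphic and single-valued on the punctured disk, and the boundary estimate for the adapted metric, $|q|^{1/k}=e^{u}\sim c\,|z|^{\rho_p}$, gives $|f|=|q|/|z|^{m}\to c^k>0$. By the removable singularity theorem $f$ extends holomorphically and non-vanishingly across $p$, so $q=f\,z^m\,dz^k$ extends with a zero or pole of order exactly $m$; hence $q$ is a meromorphic $k$-differential on $S$ adapted to $\nabla$. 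For uniqueness, if $q'$ is another adapted $k$-differential then $q/q'$ is a global meromorphic function on $S$ whose modulus is a positive constant (both $q$ and $q'$ induce the same $g$ up to a positive constant), so $q/q'$ is constant.

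For the converse, given a meromorphic $k$-differential $q$ I would define $g$ locally by $g^{1/2}=|q_\alpha|^{1/k}|dz_\alpha|$; away from the zeros and poles of $q$ the function $\tfrac1k\log|q_\alpha|$ is harmonic so $g$ is flat, and at a zero or pole of order $m$ it has residue $m/k$, so $g$ is a singular flat metric. Theorem~\ref{t8} then yields a \emph{unique} Fuchsian $\nabla$ with real periods adapted to $g$, and therefore to $q$. To see that its monodromy lies in $\mathbb{Z}_k$ I would pass to natural coordinates: on a simply connected chart avoiding the singular set, choose a branch $q_\alpha^{1/k}$ and set $w_\alpha=\int q_\alpha^{1/k}\,dz_\alpha$, so that $q=dw_\alpha^k$ and $g^{1/2}=|dw_\alpha|$. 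In such a coordinate the identity is a local isometry, whence $\eta\equiv0$ and $(U_\alpha,w_\alpha)$ is a $\nabla$-chart; any two natural coordinates satisfy $(dw_\alpha/dw_\beta)^k=1$, so $w_\alpha=\zeta_{\alpha\beta}w_\beta+c_{\alpha\beta}$ with $\zeta_{\alpha\beta}\in\mathbb{Z}_k$. Since $K_\alpha\equiv0$ in these charts, $\hat\xi_{\alpha\beta}=\xi_{\alpha\beta}=\zeta_{\alpha\beta}\in\mathbb{Z}_k$, and therefore the monodromy representation $\rho$ takes values in $\mathbb{Z}_k$.

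I expect the main obstacle to be the meromorphic extension of $q$ across the Fuchsian poles in the first direction: there one must combine the integrality $k\rho_p\in\mathbb{Z}$ coming from the $\mathbb{Z}_k$-monodromy with the precise boundary behavior of the adapted metric in order to guarantee that $q$ acquires a genuine finite-order zero or pole, rather than an essential singularity or a branch point. The remaining verifications (consistency of $q$ on overlaps, adaptedness, and the monodromy computation in natural coordinates) are routine once Theorem~\ref{t8} and Lemma~\ref{monodromy} are in hand.
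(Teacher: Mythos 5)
The paper does not actually prove Theorem~\ref{t6}: it is imported verbatim from \cite[Theorem 1.2]{R1}, so there is no in-paper argument to compare against, and your proposal must be judged on its own. It is correct, and it is a natural reconstruction: you deduce both directions from the two ingredients the paper does quote, namely the metric correspondence of Theorem~\ref{t8} and the Leray $G$-atlas of Lemma~\ref{monodromy}. In the forward direction all three steps work: the cocycle computation $q_\beta=\xi_{\alpha\beta}^k q_\alpha=q_\alpha$ makes $q:=dz_\alpha^k$ a well-defined nowhere-zero holomorphic $k$-differential on $S^o$; Remark~\ref{perres} gives $m=k\rho_p\in\mathbb{Z}$ at each pole; and the extension argument for $f=q/(z^m\,dz^k)$ is sound, with the one-line repair that boundedness of $|f|$ near the puncture gives the holomorphic extension first, and then continuity of the modulus gives $|f(0)|=c^k\neq0$ (a limit of $|f|$ alone does not yield a limit of $f$). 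Your uniqueness arguments are also fine: a meromorphic function of constant nonzero modulus is constant, and since the metric adapted to $q$ is determined by the local formula $g^{1/2}=|q|^{1/k}|dz|$, the uniqueness half of Theorem~\ref{t8} pins down $\nabla$. In the converse, the key observation that adaptedness forces $\eta\equiv0$ in a natural coordinate $w_\alpha=\int q_\alpha^{1/k}\,dz_\alpha$ (affinely parametrized Euclidean segments solve $w''+f(w)(w')^2=0$ in every direction only if $f\equiv0$) is exactly what legitimizes the monodromy computation. Two details deserve to be written out: first, in the forward direction $g^{1/2}=c_\alpha|dz_\alpha|$ holds a priori with chart-dependent constants, and you need $|a_{\alpha\beta}|=1$ together with connectedness of $S^o$ to get a single constant, which the allowed rescaling of $g$ then removes; second, $(dw_\alpha/dw_\beta)^k=1$ only makes $dw_\alpha/dw_\beta$ a locally constant $\mathbb{Z}_k$-valued function, so you should pass to an atlas with connected intersections before reading off the cocycle $\hat\xi_{\alpha\beta}=\xi_{\alpha\beta}\in\mathbb{Z}_k$ with the choice $K_\alpha\equiv0$. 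With these points tightened, your proof is complete and is in the same flat-structure spirit as the original argument in \cite{R1}.
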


\begin{remark}
    Note that if $\nabla$ is a Fuchsian meromorphic connection with  monodromy in $\mathbb{Z}_k$ then the residues of $\nabla$ are in $\frac{1}{k}\mathbb{Z}$, by Remark~\ref{perres}.
\end{remark} 

\section[Local behaviour around Fuchsian poles]{Local behaviour around Fuchsian poles}\label{localbehaviorfuchsianpolessec}
In this section we study the local behaviour of geodesics around non-resonant Fuchsian poles. In Subsection \ref{nonresonantfuchsian}, we show the existence a distinguished chart around a non-resonant Fuchsian pole of a meromorphic connection~$\nabla$. In Subsection \ref{geodesicsaroundnonresonatfuchsianpol} and Subsection \ref{s:noncriticalgeo}, we study the behaviour of geodesics of a meromorphic connection $\nabla$ around non-resonant and noncritical Fuchsian poles, respectively.  In Subsection \ref{s:singfmonchartUz}, we study the singular flat metric adapted to $\nabla$ in a neighbourhood of a Fuchsian pole. Finally, in Subsection \ref{sec5.3}, we study the $\omega$-limit set of a simple geodesic around a Fuchsian pole with real residue greater than $-1$.

\subsection{Non-resonant Fuchsian poles}\label{nonresonantfuchsian}
As we have recalled above, around any regular point $p\in S^o$  it is possible to find a $\nabla$-chart, that is a chart $(U,z)$ around $p$ such that the local representation of $\nabla$ is identically zero. Of course, this is not true around poles.   In the next lemma we show there exists a distinguished chart around non-resonant Fuchsian poles. 

\begin{lemma}\label{l8}
Let $\nabla$ be a  meromorphic connection on a Riemann surface $S$. Let $p_0\in S$ be a Fuchsian pole with residue $\rho=\operatorname{Res}_{p_0}\nabla\in\mathbb{C}$. Assume that $p_0$ is non-resonant, that is $\rho\ne-k-1$ for all $k\in\mathbb{N}^*$.
Then there exists a chart $(U,w)$ centered at $p_0$ such that the local representation of $\nabla$ on $U$ is of the form
\[
\eta=\frac{\rho}{w}dw. 
\]
\end{lemma}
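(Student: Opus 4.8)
The plan is to construct the chart by solving explicitly for the coordinate change, with the non-resonance hypothesis entering exactly as the condition that makes the construction solvable. I would start from an arbitrary chart $(U,z)$ centered at $p_0$ and write the local representation as $\eta=f(z)\,dz$ with
\[
f(z)=\frac{\rho}{z}+h(z),
\]
where $h\in\mathcal{O}(U)$ is the holomorphic part; fix a holomorphic primitive $H$ of $h$ with $H(0)=0$. The new chart will be a biholomorphism $w=w(z)$ with $w(0)=0$ and $w'(0)\neq0$. Using the transformation rule \eqref{merrepresentation} for local representations (equivalently, expanding $\nabla(\partial_z)=\nabla\bigl(w'(z)\,\partial_w\bigr)$ by the Leibniz rule), the representation in the $w$-coordinate equals $\frac{\rho}{w}\,dw$ if and only if $w$ solves
\[
f(z)=\frac{w''(z)}{w'(z)}+\rho\,\frac{w'(z)}{w(z)}=\frac{d}{dz}\log\!\bigl(w'(z)\,w(z)^{\rho}\bigr).
\]

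Next I would integrate this first-order condition. Since $f=\frac{d}{dz}\bigl(\rho\log z+H\bigr)$, one integration gives $w'(z)\,w(z)^{\rho}=e^{C}z^{\rho}e^{H(z)}$ for a constant $C$ that I am free to choose. Writing $w(z)=z\,u(z)$ with $u$ holomorphic and $u(0)\neq0$ (so that $w^{\rho}=z^{\rho}u^{\rho}$ is unambiguous) turns this into
\[
u(z)^{\rho}\bigl(u(z)+z\,u'(z)\bigr)=e^{C}e^{H(z)}=:F(z),
\]
a regular-singular ODE for $u$ with $F$ holomorphic and $F(0)=e^{C}$.

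For $\rho\neq-1$ I would set $G=u^{\rho+1}$, which converts the equation into the linear ODE $\frac{d}{dz}\bigl(z^{\rho+1}G\bigr)=(\rho+1)z^{\rho}F$, having the holomorphic solution $G(z)=(\rho+1)\sum_{n\ge0}\frac{F_n}{\rho+1+n}z^{n}$, where $F(z)=\sum_n F_n z^n$. This is the step where non-resonance is essential: the denominators $\rho+1+n$ are nonzero for every $n\ge1$ precisely because $\rho\neq-(n+1)=-k-1$ for all $k\in\mathbb{N}^{*}$, while the $n=0$ term contributes the nonzero coefficient $e^{C}$. Hence $G$ is a convergent power series with $G(0)=e^{C}\neq0$, and choosing a branch of $G^{1/(\rho+1)}$ yields a holomorphic $u$ with $u(0)\neq0$; then $w=z\,u$ is a genuine chart centered at $p_0$ with $w'(0)\neq0$, whose local representation is $\frac{\rho}{w}\,dw$ by construction. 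The boundary case $\rho=-1$ (which is still non-resonant) I would treat separately and more simply: there the equation reads $1+z(\log u)'=F$, and the choice $C=0$ makes $(F-1)/z$ holomorphic, so $\log u$, and hence $u$, is recovered by a single integration.

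The main obstacle is bookkeeping rather than conceptual: one must check the transformation computation carefully so that the excluded set $\{-k-1:k\in\mathbb{N}^{*}\}$ emerges exactly as the obstruction to dividing by $\rho+1+n$, and verify that the resulting series for $u$ is an honest convergent power series with $u(0)\neq0$. Once these are in place, the remaining work is a direct integration.
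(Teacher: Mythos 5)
Your proposal is correct, and it produces exactly the paper's change of variables: your $u=G^{1/(\rho+1)}$ with $G(z)=(\rho+1)\sum_{n\ge0}\frac{F_n}{\rho+1+n}z^n$ is, up to the constant factor coming from the free choice of $e^C$, the function $K_\rho$ of \eqref{ch3.2}, and $w=zu$ is the same chart, with the same separate treatment of the boundary case $\rho=-1$. The difference is one of direction rather than substance: the paper writes down the series for $K_\rho$ without motivation and then verifies, through the somewhat lengthy computation culminating in $A\equiv0$, that the representation in the new chart is $\frac{\rho}{w}\,dw$; you instead derive the condition $f=\frac{d}{dz}\log\bigl(w'(z)\,w(z)^\rho\bigr)$ from the transformation rule \eqref{merrepresentation}, integrate it once, and linearize the resulting first-order equation via $G=u^{\rho+1}$ --- which is precisely the identity $\frac{(K_\rho^{\rho+1})'}{\rho+1}=K_\rho^\rho K_\rho'$ that powers the paper's verification. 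Your route has the merit of making the non-resonance hypothesis appear transparently as the non-vanishing of the denominators $\rho+1+n$ (with $n=0$ accounting for the case split at $\rho=-1$), whereas in the paper it is buried in the definition of $K_\rho$. Both arguments need the same routine convergence remark --- since $|\rho+1+n|\to\infty$, the series for $G$ converges wherever $F$ does --- and the same care with branches of $w^\rho=z^\rho u^\rho$ on a slit neighbourhood, which you correctly flag; and your chain of equivalences is reversible ($w'(0)=u(0)\ne0$, so $w$ is a genuine chart and the transformation computation runs backwards), so there is no gap.
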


\begin{proof}
Let $(V,z)$ be a simply connected chart centered at $p_0$ such that $p_0$ is the unique pole of $\nabla$ in~$V$. Since $p_0$ is a Fuchsian pole, the local representation of $\nabla$ on $V$ is of the form
\[
\eta_1=\left(\frac{\rho}{z}+f\right)dz,
\]
where $f\colon V\to\mathbb{C}$ is a holomorphic function. Let $F$ be a holomorphic primitive of $f$ with $F(0)=0$ and write the Taylor expansion of $e^F$ as
\begin{equation}\label{ch3.1}
  e^F=\sum_{j=0}^{\infty}c_jz^j,
\end{equation}
where $c_0=1.$ By shrinking $V$, if necessary, we can define a one-to-one holomorphic function by taking a branch of the following functions
\begin{equation}\label{ch3.2}
  K_\rho:=
\begin{cases}
    \left(\sum\limits_{j=0}^{\infty}\frac{c_j}{j+\rho+1}z^j\right)^{\frac{1}{\rho+1}},& \text{if } \rho\ne- 1,\\
    \exp \sum\limits_{j=1}^{\infty}\frac{c_j}{j}z^j,             &\text{if } \rho=-1.
\end{cases}
\end{equation}
such that
\[
K_\rho(0)=\begin{cases}
              \left(\frac{1}{\rho+1}\right)^{\frac{1}{\rho+1}}, & \mbox{if } \rho\ne-1 \\
              1, & \mbox{if } \rho=-1.
            \end{cases} 
\]
Note that $K_\rho(0)\ne 0$. Set
\[
w=zK_\rho\colon V\to \mathbb{C}.
\]
By shrinking again $V$, if necessary, we can assume that $w$ is injective; choose $r>0$ so that $w(V)$ contains $W=\{|w|<r\}$. Then $(U,w)$, with $U=w^{-1}(W)$, is a chart centered at $p_0$.  Let $\eta$ be the local representation of $\nabla$ on $U$; by the transformation rule we have
\[
\eta=\eta_1-\frac{d\xi}{\xi}\;,
\]
where $\xi=K_\rho +zK_\rho' $. Then we have
\[
\begin{aligned}
  \eta &=\left(\frac{\rho }{z}+f -\frac{2K_\rho' +zK_\rho'' }{K_\rho +zK_\rho' }\right)dz \\
   &=\frac{\rho K_\rho +\rho zK_\rho' -2zK_\rho' -z^2K_\rho'' +z (K_\rho +zK_\rho' )f}{z(K_\rho +zK_\rho' )}dz \\
   &=\frac{\rho}{w}dw+\frac{zf K_\rho^2 +(z^2f -2z-\rho z)K_\rho K_\rho' -z^2K_\rho K_\rho'' -\rho z^2{K_\rho'}^2}{zK_\rho (K_\rho +zK_\rho' )}dz.
\end{aligned}
\]
Hence it is enough to prove that
\[
zf  K_\rho^2  +(z^2f  -2z-\rho z)K_\rho  K_\rho'  -z^2K_\rho  K_\rho''  -\rho z^2 {K_\rho'}^2\equiv0.
\]
Since $K_\rho  \ne 0$ on $U$,  we can multiply both sides of the last equality by a branch of $K_\rho^{\rho-1}$  and divide by $z$. Set
\[
A:=f  K_\rho^{\rho+1}  +(zf  -2-\rho)K_\rho^\rho  K_\rho'  -zK_\rho^\rho  K_\rho''  -\rho zK_\rho^{\rho-1}{K_\rho'}^2;
\]
it is enough to prove $A\equiv0$. By using the following simple formulas
\begin{itemize}
  \item for $\rho\ne -1$
  \begin{enumerate}
    \item $\frac{(K_\rho^{\rho+1}  )'}{\rho+1}=K_\rho^\rho  K_\rho'  ;$
    \item $\frac{(K_\rho^{\rho+1}  )''}{\rho+1}=K_\rho^\rho  K_\rho''  +\rho K_\rho^{\rho-1}{K_\rho'}^2;$
  \end{enumerate}
  \item for $\rho= -1$
  \begin{enumerate}
    \item $(\log K_\rho)'=\frac{K_\rho'}{K_\rho};$
    \item $(\log K_\rho)''=\frac{K_\rho''  }{K_\rho  }-K_\rho^{-2}{K_\rho'}^2.$
  \end{enumerate}
\end{itemize}
we have
\[
A=\begin{cases}
  \frac{1}{\rho+1}\left((\rho+1)f  K_\rho^{\rho+1}  +zf  (K_\rho^{\rho+1}  )'-(\rho+2)(K_\rho^{\rho+1}  )'-z(K_\rho^{\rho+1}  )''\right), &\mbox{if } \rho\ne-1 \\
  f  +zf  (\log K_\rho  )'-(\log K_\rho  )'-z(\log K_\rho  )'', &\mbox{if } \rho=-1.
\end{cases}
\]
By using \eqref{ch3.1} and \eqref{ch3.2} we have
\[
\sum_{j=1}^{\infty}jc_jz^{j-1}=(e^F)'=fe^F=\begin{cases}
    (\rho+1)f  K_\rho^{\rho+1}  +zf  (K_\rho^{\rho+1}  )', & \mbox{if } \rho\ne-1,\\
    f  +zf  (\log K_\rho  )', & \mbox{if }\rho=-1.
  \end{cases}
\]
Furthemore,
\[
\sum_{j=1}^{\infty}jc_jz^{j-1}=\begin{cases}
    (\rho+2)(K_\rho^{\rho+1}  )'+z(K_\rho^{\rho+1}  )'', & \mbox{if } \rho\ne-1,\\
    (\log K_\rho  )'+z(\log K_\rho  )'', & \mbox{if }\rho=-1.
  \end{cases}
  \]
Consequently, $A\equiv0$ and hence
\[
\eta=\frac{\rho}{w}dw,
\]
as claimed.
\end{proof}

\begin{definition}
  Let $\nabla$ be a meromorphic connection on a Riemann surface $S$. Let $p$ be a non-resonant Fuchsian pole of $\nabla$. A simply connected chart $(U,z)$ centered at $p$ is said to be \emph{adapted} to $(\nabla,p)$ if
   \begin{itemize}
     \item the local representation of $\nabla$ on $U$ is $\eta:=\frac{\rho}{z}dz,$  where $\rho=\operatorname{Res}_p\nabla$; and
     \item $z(U)$ is a disc in $\mathbb{C}$ centered in $0$ of radius $r>0$.
   \end{itemize}
   We say $r$ is the \emph{radius} of $(U,z)$.
\end{definition}

Lemma \ref{l8} says that for any non-resonant Fuchsian pole $p$ of a meromorphic connection $\nabla$ on a Riemann surface $S$ there exists a chart adapted to $(\nabla,p)$. In general, for a resonant Fuchsian pole $p$ there may not exist a chart adapted to $(\nabla, p)$. However, the local study of geodesics around a resonant Fuchsian pole is already covered in \cite{R1}.

\subsection[Geodesics around non-resonant Fuchsian poles ]{Geodesics around non-resonant Fuchsian poles with real residues}\label{geodesicsaroundnonresonatfuchsianpol}
The next proposition describes the behaviour of geodesics of a meromorphic connection $\nabla$ around  non-resonant Fuchsian poles having real residues.

\begin{proposition}\label{prop2}
  Let $\nabla$ be a meromorphic connection on a Riemann surface $S$. Let $p$ be a non-resonant Fuchsian pole of $\nabla$ with residue $\rho\in\mathbb{R}$. Let $(U,z)$ be a chart adapted to $(\nabla,p)$ with radius $r>0$. Let $\sigma\colon [0,\varepsilon)\to U\setminus\{p\}$ be a geodesic of $\nabla$. Fix $0<\beta<\frac{\pi}{2}$. Let
\[
H_\rho:=\begin{cases}
              \{w\in \mathbb{C}\mid -\beta< \arg w<\pi +\beta, |w|>r^{\rho+1}\} & \mbox{if } \rho<-1; \\
             \{w\in \mathbb{C}\mid \operatorname{Im}\,w>0\} & \mbox{if } \rho=-1;\\
              \{w\in \mathbb{C}\mid -\beta< \arg w< \pi +\beta,\, 0<|w|<r^{\rho+1}\} & \mbox{if } \rho>-1.
            \end{cases}
\]
 For $\alpha\in[0,2\pi)$, let $\chi_\rho^\alpha\colon H_\rho\to \chi_\rho^\alpha(H_\rho)\subseteq z(U)$ be the holomorphic map given by
\[
\chi_\rho^\alpha(w)=\begin{cases}
     e^{i\alpha}w^{\frac{1}{\rho+1}},&  \mbox{if } \rho\ne-1;\\
    re^{iw} &\mbox{if } \rho=-1. 
 \end{cases}
\]
\begin{enumerate}
\item If $\rho\ne-1$ then a smooth curve $\sigma\colon [0,\varepsilon)\to U\setminus\{p\}$ is a geodesic for $\nabla$ if and only if there exists $\alpha\in[0,2\pi)$  such that
 $z(\sigma(t))=\chi^\alpha_\rho(at+b)$
 for some $a>0$ and $b\in \mathbb{C}$ with $\operatorname{Im}\,b\ge0$.
Furthermore, setting $\tilde{\sigma}=z\circ\sigma$  we have
\[
\begin{aligned}
  \alpha= &\begin{cases}
     \frac{1}{\rho+1}\arg\left(-\tilde{\sigma}(0)^\rho\tilde{\sigma}'(0)\right), & \mbox{if } \rho<0, \\
    \frac{1}{\rho+1} \arg\left(\tilde{\sigma}(0)^\rho\tilde{\sigma}'(0)\right), & \mbox{if } \rho\ge0.
   \end{cases}
\end{aligned}
\]
Moreover, $a= \left|(\rho+1)\tilde{\sigma}(0)^\rho\tilde{\sigma}'(0)\right| $ and $ b=  e^{-i\alpha(\rho+1)}\tilde{\sigma}(0)^{\rho+1}.$

\item If $\rho=-1$ then a smooth curve $\sigma\colon [0,\varepsilon)\to U\setminus\{p\}$ is a geodesic for $\nabla$ if and only if 
we have $z(\sigma(t))=\chi_{-1}^0(at+b)$
for some $a\in \mathbb{C}^*$ and $b\in \mathbb{C}$ with $\operatorname{Im}\,b\ge0$.
Moreover, setting $\tilde{\sigma}=z\circ\sigma$
we have $a=\frac{i\tilde{\sigma}'(0)}{\tilde{\sigma}(0)}$ and $e^{ib}= \frac{\tilde{\sigma}(0)}{r}.$
\end{enumerate}
\end{proposition}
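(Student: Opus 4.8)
The plan is to reduce everything to the single geodesic ODE in the adapted chart and integrate it explicitly, then read off the normal form $\chi_\rho^\alpha$ by inverting the local isometry. In the adapted chart $(U,z)$ the local representation is $\eta=\frac{\rho}{z}dz$, so writing $\tilde\sigma=z\circ\sigma$ the geodesic equation \eqref{geodesicequation} becomes
\[
\tilde\sigma''+\frac{\rho}{\tilde\sigma}(\tilde\sigma')^2\equiv0 .
\]
The key observation is that this is equivalent to $\frac{d}{dt}\bigl(\tilde\sigma^{\rho}\tilde\sigma'\bigr)\equiv0$, i.e.\ $\tilde\sigma^{\rho}\tilde\sigma'$ is constant along $\sigma$. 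Since a primitive of $z^\rho=\exp(\rho\log z)$ is exactly a local isometry $J$ of $\nabla$ in the sense of Definition~\ref{d:isometry} (namely $J=\frac{z^{\rho+1}}{\rho+1}$ for $\rho\ne-1$ and $J=\log z$ for $\rho=-1$), and $\frac{d}{dt}J(\tilde\sigma)=J'(\tilde\sigma)\tilde\sigma'=\tilde\sigma^\rho\tilde\sigma'$, this says precisely that $J\circ\sigma$ is an affine function of $t$; this is the parametrised refinement of Proposition~\ref{p:localis}(3). Thus $\sigma$ is a geodesic if and only if $\tilde\sigma^{\rho+1}=C_1t+C_2$ (for $\rho\ne-1$), resp.\ $\log\tilde\sigma=C_1t+C_2$ (for $\rho=-1$), with $C_1,C_2$ determined by $\tilde\sigma(0)$ and $\tilde\sigma'(0)$.

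Next I would invert these relations into the stated form. For $\rho\ne-1$, raising to the power $\frac1{\rho+1}$ gives $\tilde\sigma=(C_1t+C_2)^{1/(\rho+1)}$ for a suitable branch; absorbing $\arg C_1$ into a rotation $e^{i\alpha}$ (using the identity $\chi_\rho^{\alpha}(e^{i\psi}w)=\chi_\rho^{\alpha+\psi/(\rho+1)}(w)$) normalises the leading coefficient to $a=|C_1|>0$ and produces $\tilde\sigma=\chi_\rho^\alpha(at+b)$. The converse, that every such curve solves the ODE, is a one-line direct substitution into $\tilde\sigma''+\frac{\rho}{\tilde\sigma}(\tilde\sigma')^2$, which cancels identically for every $a,b,\alpha$. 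For $\rho=-1$ the relation $\log\tilde\sigma=C_1t+C_2$ exponentiates directly to $\tilde\sigma=re^{i(at+b)}=\chi_{-1}^0(at+b)$, with $a$ and $b$ read off from $C_1=\tilde\sigma'(0)/\tilde\sigma(0)$ and $C_2=\log\tilde\sigma(0)$, giving in particular $e^{ib}=\tilde\sigma(0)/r$. The explicit constants $a=\bigl|(\rho+1)\tilde\sigma(0)^\rho\tilde\sigma'(0)\bigr|$ and $b=e^{-i\alpha(\rho+1)}\tilde\sigma(0)^{\rho+1}$ then follow by evaluating at $t=0$.

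I would then verify that $at+b$ lands in the prescribed domain $H_\rho$ and that $\chi_\rho^\alpha(at+b)$ stays in $z(U)$. The modulus condition is immediate: $|at+b|=|\tilde\sigma|^{\rho+1}$, which is $<r^{\rho+1}$ when $\rho>-1$ and $>r^{\rho+1}$ when $\rho<-1$ exactly because $\rho+1$ changes sign, and this is precisely what forces the two forms of $H_\rho$ (with the intermediate exponential form when $\rho=-1$, where $|\tilde\sigma|<r$ translates into $\operatorname{Im}(at+b)>0$).

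The step I expect to be the main obstacle is the precise phase/branch bookkeeping yielding the stated value of $\alpha$ — in particular the dichotomy between $\rho\ge0$ and $\rho<0$ together with the normalisation $\operatorname{Im} b\ge0$. These are governed by the sign of $\rho+1$, which decides whether the branch of the $(\rho+1)$-th root keeps the straightened segment in the correct half-plane, combined with the freedom to reverse the orientation of the geodesic, which flips the sign of $C_1$ and hence of $\operatorname{Im} b$; reconciling these two effects with the angular sector $-\beta<\arg w<\pi+\beta$, whose width slightly exceeding $\pi$ is exactly what accommodates nearly-horizontal straightened segments, is the delicate point, whereas the core integration and the converse substitution are entirely routine.
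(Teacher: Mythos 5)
Your proposal follows essentially the same route as the paper: its proof likewise reduces everything to the ODE \eqref{geodesictar} in the adapted chart, solves it (leaving the integration implicit where you make it explicit via the first integral $\frac{d}{dt}\bigl(\tilde\sigma^{\rho}\tilde\sigma'\bigr)\equiv 0$, i.e.\ affinity of $J\circ\sigma$ in $t$), and reads off $\alpha$, $a$, $b$ from $\tilde{\sigma}(0)=\chi^\alpha_\rho(b)$ and $\tilde{\sigma}'(0)=a(\chi^\alpha_\rho)'(b)$, exactly as you do by evaluating at $t=0$. The phase/branch bookkeeping you flag as the delicate point is precisely what the paper's two-line proof also does not carry out; the only correction worth noting is that the stated dichotomy in the formula for $\alpha$ occurs at $\rho=0$ rather than at $\rho=-1$, so it is not governed solely by the sign of $\rho+1$ as you suggest.
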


\begin{figure}
\centering
\begin{subfigure}{.4\textwidth}
  \centering
  \includegraphics[width=.6\linewidth]{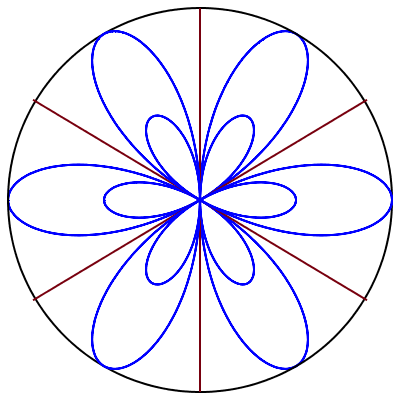}
  \caption{$\rho<-1$}
  \label{fig:rhokichik-1rasm}
\end{subfigure}
\begin{subfigure}{.4\textwidth}
  \centering
  \includegraphics[width=.6\linewidth]{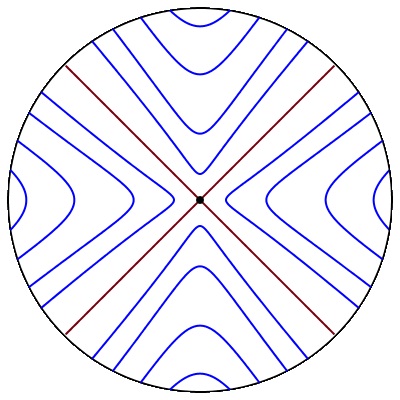}
  \caption{$\rho>-1$}
  \label{fig:rhokatta-1rasm}
\end{subfigure}
\caption{Geodesics of $\nabla$ on $z(U)$.}
\label{fig:-1gatengemaslari}
\end{figure}

\begin{proof}
A curve $\sigma\colon [0,\varepsilon)\to U\setminus\{p\}$ is a geodesic for $\nabla$ if and only if the representation  $\tilde{\sigma}=z\circ \sigma$ satisfies
  \begin{equation}\label{geodesictar}
  {\tilde{\sigma}}''+\frac{\rho}{\tilde{\sigma}} (\tilde{\sigma}' )^2=0.
  \end{equation}
  By solving \eqref{geodesictar} we get that $\tilde\sigma(t)=\chi^\alpha_\rho(at+b)$. The values of $\alpha$, $a$ and $b$ follow from $\tilde{\sigma}(0)=\chi^\alpha_\rho (b)$ and $\tilde{\sigma}'(0)=a(\chi^\alpha_\rho )'(b)$.
\end{proof}

Figures~\ref{fig:-1gatengemaslari} and~\ref{fig:-1gatengrasm} contain typical examples of geodesics.

\begin{remark}
Assume $\rho>-1$. In Proposition \ref{prop2} we can take
\[
\tilde{H}_\rho:= \{w\in \mathbb{C}\mid 0< \arg w< \pi,\, 0<|w|<r^{\rho+1}\}
\] 
instead of $H_\rho$. More precisely,  a smooth curve $\sigma\colon [0,\varepsilon)\to U\setminus\{p\}$ is a geodesic for $\nabla$ if and only if there exists $\alpha\in[0,2\pi)$  such that
\[
z(\sigma(t))=\chi^\alpha_\rho(l(t))
\]
for some horizontal Euclidean segment $l$ in $\tilde{H}_\rho$.
 \end{remark}

\begin{figure}
\centering
\begin{subfigure}{.4\textwidth}
  \centering
  \includegraphics[width=.6\linewidth]{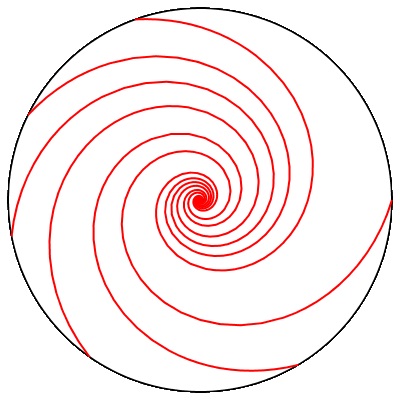}
  \caption{spirals}
  \label{fig:spirallarrasm}
\end{subfigure}
\begin{subfigure}{.4\textwidth}
  \centering
  \includegraphics[width=.6\linewidth]{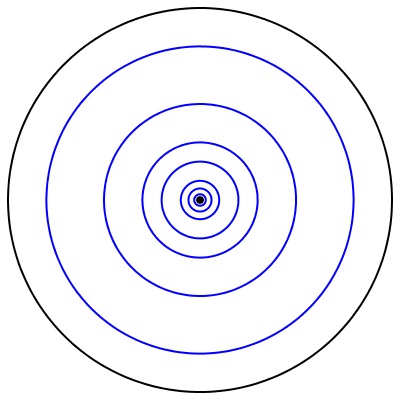}
  \caption{periodic geodesics}
  \label{fig:periodiclarrasm}
\end{subfigure}
\begin{subfigure}{.4\textwidth}
  \centering
  \includegraphics[width=.6\linewidth]{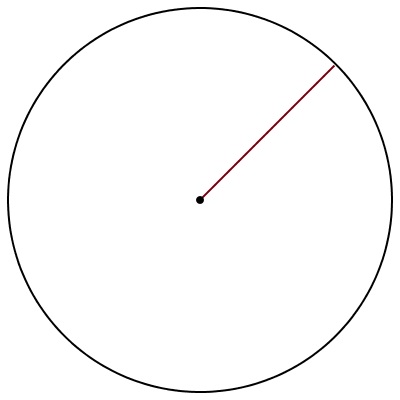}
\caption{a critical geodesic}
  \label{fig:criticalcha}
\end{subfigure}
\caption{$\rho=-1$}
\label{fig:-1gatengrasm}
\end{figure}

Actually,  Proposition \ref{prop2} is a special case of  \cite[Theorem 8.1]{AT1}. We shall need the explicit form of the parametrization of geodesics around non-resonant Fuchsian poles in the next sections. 

\subsection{Noncritical geodesics}\label{s:noncriticalgeo}

To study the $\omega$-limit set of a geodesic of a meromorphic connection $\nabla$ on a Riemann surface $S$ it is useful to know  the local behavior of geodesics around a pole $p$.

\begin{theorem}[{\cite{R1}}]\label{rhoge-1}
Let $\nabla$ be a meromorphic connection on a Riemann surface $S$. Set $S^o:=S\setminus\Sigma$ where $\Sigma$ is the set of poles for $\nabla$.  Let $\sigma\colon [0,\varepsilon)\to S^o$ be a maximal geodesic of $\nabla$ and $W$  its $\omega$-limit set. Let $p$ be a Fuchsian pole with $\operatorname{Re}\operatorname{Res}_{p}\nabla\le -1$. If $p\in W$ then $W=\{p\}$.
\end{theorem}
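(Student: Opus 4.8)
The plan is to analyze the behavior of the geodesic $\sigma$ near the pole $p$ using the explicit description afforded by Proposition~\ref{prop2}, treating the cases $\operatorname{Re}\rho<-1$ and $\operatorname{Re}\rho=-1$ together but with attention to the critical value. Since $p\in W$, there is a sequence $t_n\uparrow\varepsilon$ with $\sigma(t_n)\to p$. First I would pass to a chart $(U,z)$ around $p$: if $p$ is non-resonant I use the adapted chart from Lemma~\ref{l8} where $\eta=\frac{\rho}{z}dz$; if $p$ is resonant the residue is a negative integer $\le -2$, so certainly $\operatorname{Re}\rho\le-1$, and this case is already handled in \cite{R1} as noted in the theorem's attribution (so the real content is the non-resonant case). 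The key geometric fact, visible from the formula $z(\sigma(t))=\chi_\rho^\alpha(at+b)$ with $\chi_\rho^\alpha(w)=e^{i\alpha}w^{1/(\rho+1)}$ and Figure~\ref{fig:rhokichik-1rasm}, is that when $\operatorname{Re}\rho\le-1$ the pole acts as an \emph{attractor with bounded entry}: the domain $H_\rho$ is the \emph{exterior} region $\{|w|>r^{\rho+1}\}$ (for $\rho<-1$) or the upper half-plane (for $\rho=-1$), and the preimage under $\chi_\rho^\alpha$ of the pole is reached in finite parameter along a segment that, once it enters a small neighborhood of $p$, cannot escape.

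The heart of the argument is to show that once $\sigma$ is close enough to $p$ it converges to $p$, i.e. $\varepsilon$ is finite and $\sigma(t)\to p$ as $t\uparrow\varepsilon$, which forces $W=\{p\}$. Concretely, I would fix a small disc $D\subset U$ centered at $p$ and argue that there is a smaller disc $D'\subset D$ such that any geodesic arc entering $D'$ must terminate at $p$ before leaving $D'$. This is where the explicit parametrization does the work: writing the geodesic as a horizontal Euclidean segment $at+b$ in the $w$-coordinate and analyzing $|z(\sigma(t))|=|\chi_\rho^\alpha(at+b)|=|at+b|^{\operatorname{Re}(1/(\rho+1))}$ (using that $\rho$ is real so $1/(\rho+1)$ is real and, crucially, \emph{negative} when $\rho<-1$), one sees that as the segment $at+b$ moves to $\infty$ in the region $|w|>r^{\rho+1}$ the modulus $|z(\sigma(t))|\to 0$ monotonically along the segment; the segment reaches the ``point at infinity'' of $H_\rho$ in finite Euclidean length only in the limiting direction, and one checks that the parameter interval is finite precisely when the geodesic heads into the pole. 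For $\rho=-1$ the map is $re^{iw}$ on the upper half-plane and a horizontal line $w=at+b$ with suitable $a$ gives $|z(\sigma(t))|=re^{-\operatorname{Im}(at+b)}\to 0$ as $\operatorname{Im}\to+\infty$, again exhibiting convergence to $p$; the only geodesics staying near $p$ without converging are the periodic circles $\operatorname{Im} w=\text{const}$, but these correspond to $\operatorname{Im} a=0$ and do not accumulate at $p$.

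From the monotonicity of $|z(\sigma(t))|$ along the relevant direction I would deduce a trapping statement: there is $\delta>0$ so that if $|z(\sigma(t_0))|<\delta$ and $\sigma$ is moving ``inward'' at $t_0$, then $|z(\sigma(t))|$ stays below $\delta$ and decreases to $0$ for all $t>t_0$, so $\sigma$ never re-emerges from the disc of radius $\delta$. Because $\sigma(t_n)\to p$, some $t_n$ has $\sigma(t_{n})$ in this trapping region and heading inward (if not heading inward, the same formula shows $|z(\sigma)|$ would increase and $\sigma$ would leave a neighborhood of $p$, contradicting that infinitely many later $t_m$ satisfy $\sigma(t_m)\to p$ unless $\sigma$ is a periodic circle, which is excluded for $\operatorname{Re}\rho\le-1$ by the non-vanishing of $\operatorname{Im} a$ forced by accumulation). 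Hence for all $t>t_n$ we have $\sigma(t)$ confined to this shrinking neighborhood, so $\varepsilon<\infty$ and $\sigma(t)\to p$, whence the $\omega$-limit set is exactly $\{p\}$.

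The main obstacle I anticipate is the bookkeeping at the boundary between ``heading inward'' and ``heading outward,'' together with ruling out that $\sigma$ oscillates in and out of a neighborhood of $p$ infinitely often (which is what one must exclude to conclude $W=\{p\}$ rather than merely $p\in W$). The clean way to dispose of this is the monotonicity of $t\mapsto|z(\sigma(t))|$ for geodesics constrained to the attracting region $H_\rho$, which the explicit $\chi_\rho^\alpha$ makes transparent: unlike the case $\rho>-1$ where geodesics can pass the pole and spiral out (Figure~\ref{fig:rhokatta-1rasm}), for $\operatorname{Re}\rho\le-1$ the geometry of $H_\rho$ as an exterior/half-plane domain prevents any geodesic that gets close from turning around, so no oscillation is possible. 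Once this no-return property is established, the rest is immediate; I therefore expect essentially all the real work to be in the case analysis of the modulus formula and the verification that the entering geodesic cannot be one of the exceptional periodic ones.
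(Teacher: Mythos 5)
First, note that this paper contains no proof of Theorem~\ref{rhoge-1}: it is imported verbatim from \cite{R1}, so your attempt has to be judged as a standalone reconstruction rather than against an argument in the text. Judged that way, your outline is essentially viable for \emph{real} residues, but it has a genuine gap: the hypothesis is $\operatorname{Re}\operatorname{Res}_p\nabla\le-1$, not $\operatorname{Res}_p\nabla\in\mathbb{R}$. Proposition~\ref{prop2}, on which your whole analysis rests, is stated only for real residues, and your key computation $|z(\sigma(t))|=|at+b|^{\operatorname{Re}(1/(\rho+1))}$ is false when $1/(\rho+1)\notin\mathbb{R}$: one has $\bigl|w^{1/(\rho+1)}\bigr|=|w|^{\operatorname{Re}(1/(\rho+1))}e^{-\operatorname{Im}(1/(\rho+1))\arg w}$, and the argument factor changes the dynamics. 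The failure is most serious in the boundary case $\operatorname{Re}\rho=-1$, $\operatorname{Im}\rho\ne0$, which your $re^{iw}$ discussion does not touch (that formula is only the real $\rho=-1$ case): writing $z^{\rho+1}=(\rho+1)(at+b)$ on the universal cover of the punctured disc, one finds $\log|z(t)|=\arg\bigl((\rho+1)(at+b)\bigr)/\operatorname{Im}\rho$, which is monotone with total variation at most $\pi/|\operatorname{Im}\rho|$; such a geodesic spirals with infinite winding towards a \emph{limit circle} and never approaches the pole. There the correct conclusion is that $p\in W$ is impossible, not that $\sigma$ is trapped and converges to $p$ --- a mechanism your argument never produces. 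Relatedly, delegating the resonant case to \cite{R1} ``as noted in the theorem's attribution'' is circular: the entire theorem is from \cite{R1}, so a blind proof must itself handle residues in $\{-2,-3,\dots\}$, where Lemma~\ref{l8} gives no adapted chart.

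Two further slips in the real case are repairable but should be flagged. Monotonicity of $t\mapsto|z(\sigma(t))|$ is false as stated: for $\rho<-1$ there are geodesics both of whose rays tend to $p$, along which $|z|$ has an interior maximum. What is true, and what actually yields your no-return property, is that a maximal geodesic arc in the punctured adapted chart develops, via $J(z)=z^{\rho+1}/(\rho+1)$ on the universal cover, to a \emph{single} straight segment, so $|at+b|$ is convex in $t$ and hence $|z(t)|$ has no interior local minimum; consequently an arc entering through $|z|=r$ can never exit (a segment with both endpoints on $|w|=r^{\rho+1}$ would have $|w|\le r^{\rho+1}$ in between, by convexity) and must satisfy $|z(t)|\to0$. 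Finally, your claim ``$\varepsilon<\infty$'' is wrong: for $\operatorname{Re}\rho\le-1$ the pole sits at infinite parameter (and at infinite distance for the adapted singular flat metric), so convergence happens as $t\to+\infty$; this does not affect the conclusion $W=\{p\}$, but it is the opposite of the finite-time behaviour, which occurs only for $\rho>-1$.
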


Thanks to Theorem \ref{rhoge-1}, we can see that studying the $\omega$-limit set of geodesics approaching poles with residues less than or equal to~$-1$ is trivial.  Therefore, in this section we study only Fuchsian poles with real $\operatorname{Res}_{p}\nabla> -1$. Since resonant Fuchsian poles have residue less than $-1$, we omit the word ``non-resonant"  when we speak of poles with real residue greater than $-1$.

\begin{definition}
  Let $\nabla$ be a meromorphic connection on a Riemann surface $S$. Let $p$ be a Fuchsian pole for $\nabla$ with real $\operatorname{Res}_p\nabla>-1$. Let $(U,z)$ be a chart adapted to $(\nabla,p)$. Let $\sigma\colon (\varepsilon_-,\varepsilon_+)\to U\setminus\{p\}$ be a maximal
  (maximal in $U$ in both forward and backward time) geodesic for $\nabla$. We say $\sigma$ is a \emph{noncritical geodesic} of $\nabla$ on $U$  if both rays of $\sigma$ tend to the boundary of $U$. Otherwise, we say $\sigma$ is \emph{critical}. See Figure~\ref{fig:criticalvanoncriticalbirga}.
\end{definition}

\begin{corollary}\label{noncriticalvacritical}
   Let $\nabla$ be a  meromorphic connection on a Riemann surface $S$. Let $p$ be a Fuchsian pole for $\nabla$ with real residue $\rho:=\operatorname{Res}_p\nabla>-1$. Let $(U,z)$ be a chart adapted to $(\nabla,p)$. Let $\sigma\colon (\varepsilon_-,\varepsilon_+)\to U\setminus\{p\}$ be a maximal
  (both in forward and in backward time) geodesic for $\nabla$. Then
  \begin{enumerate}
    \item $\sigma$ is a non-critical geodesic if and only if there exists $\alpha\in[0,2\pi)$ such that
\[
z(\sigma(t))=e^{i\alpha}(at+b)^{\frac{1}{\rho+1}}
\]
    for some $a>0$ and $b\in \mathbb{C}$ with $\operatorname{Im}\,b>0$. Moreover, $|a\varepsilon_{\pm}+b|=r^{\rho+1} $ and, up to changing the initial point, we can assume $\operatorname{Re}\, b=0$. 

    \item $\sigma$ is a critical geodesic if and only if there exists $\alpha\in[0,2\pi)$ such that
\[
z(\sigma(t)):=e^{i\alpha}(at+b)^{\frac{1}{\rho+1}}
\]
    for some $a>0$ and $b\in \mathbb{R}$ with $-\frac{b}{a}\in\{\varepsilon_-,\varepsilon_+\}$.  Actually, one can choose $b=\varepsilon_-=0$  up to changing the initial point.
  \end{enumerate} 
   Moreover, for each $q\in U\setminus\{p\}$ there exists a unique (up to reparametrization) critical geodesic issuing from~$q$.
\end{corollary}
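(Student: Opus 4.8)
The plan is to reduce everything to the explicit parametrization of Proposition~\ref{prop2} and then read off the critical/noncritical dichotomy from a single real parameter. Since $\rho>-1$ we have $\rho\ne-1$, so Proposition~\ref{prop2}(1) applies: a forward ray of $\sigma$ has the form $z(\sigma(t))=e^{i\alpha}(at+b)^{1/(\rho+1)}$ with $\alpha\in[0,2\pi)$, $a>0$ and $\operatorname{Im}b\ge0$, and conversely every such curve is a geodesic. Applying this to the forward ray issuing from an interior point and extending the (unique) solution of the geodesic equation maximally in $t$, the same formula, with the same constants $(\alpha,a,b)$, holds on all of $(\varepsilon_-,\varepsilon_+)$; note that $\operatorname{Im}b$ is unchanged under the time translations relating different starting points, so $\operatorname{Im}b\ge0$ for the two-sided geodesic as well. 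Thus both (1) and (2) share this parametrization, and it remains only to decide, in terms of $\operatorname{Im}b$, whether $\sigma$ is critical.

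Write $w(t)=at+b$. In the $w$-plane the pole corresponds to $w=0$ and the boundary $\partial U$ to $\{|w|=r^{\rho+1}\}$, because $|z|=r$ is equivalent to $|w|^{1/(\rho+1)}=r$ and $\rho+1>0$. As $a>0$ is real, the support of $w$ is the horizontal segment at height $\operatorname{Im}b$. If $\operatorname{Im}b>0$ this segment lies in $\{\operatorname{Im}w>0\}$, so $\arg w\in(0,\pi)$ along it; hence it can leave the domain $H_\rho$ neither through the angular sides $\arg w=-\beta,\pi+\beta$ nor through $w=0$, and both rays must exit through $\{|w|=r^{\rho+1}\}$, i.e. tend to $\partial U$. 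This is exactly the noncritical case, and evaluating $|w|$ at the endpoints gives $|a\varepsilon_\pm+b|=r^{\rho+1}$. If instead $\operatorname{Im}b=0$, the support lies on the real axis and meets $w=0$ at $t=-b/a$; there one ray tends to $p$, so $\sigma$ is critical with $-b/a\in\{\varepsilon_-,\varepsilon_+\}$. Since $\operatorname{Im}b\ge0$ always, the two cases are exhaustive and mutually exclusive, which proves the equivalences in (1) and (2). I expect this matching of the geometric definition of (non)criticality with the sign of $\operatorname{Im}b$ to be the only delicate point, the key being that $a>0$ forces $w$ to be a horizontal segment that can escape $H_\rho$ only across the circular boundary.

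It remains to record the normalizations and the last assertion, all of which are reparametrizations. For (1), the time translation $t\mapsto t-\operatorname{Re}(b)/a$ (legitimate since $a>0$) replaces $b$ by $i\operatorname{Im}b$, so $\operatorname{Re}b=0$ while $a>0$ and $\operatorname{Im}b>0$ are preserved. For (2), since $p$ is an endpoint we may, after translating time and, should $p$ be the forward endpoint, reversing orientation and re-choosing $\alpha$, place $p$ at the backward endpoint $t=0$; this forces $b=0$ and $\varepsilon_-=0$ with $a>0$. Finally, in the normalized critical form $z(\sigma(t))=e^{i\alpha}(at)^{1/(\rho+1)}$ one has $\arg z(\sigma(t))\equiv\alpha$, so the support of a critical geodesic is the radial segment $\{\arg z=\alpha,\ 0<|z|<r\}$. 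Given $q$ with $z(q)=z_0\ne0$, the only radial segment through $z_0$ is the one with $\alpha=\arg z_0$; it is a critical geodesic passing through $q$ at $t_*=|z_0|^{\rho+1}/a<r^{\rho+1}/a$ (as $|z_0|<r$), and every critical geodesic through $q$ has this same support and hence agrees with it up to reparametrization. This gives the asserted existence and uniqueness.
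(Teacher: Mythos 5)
Your proposal is correct and follows essentially the same route as the paper: apply the explicit parametrization $z(\sigma(t))=e^{i\alpha}(at+b)^{\frac{1}{\rho+1}}$ from Proposition~\ref{prop2} and read off the dichotomy from $\operatorname{Im}b$, with $\operatorname{Im}b>0$ forcing both rays of the horizontal segment $w(t)=at+b$ to exit through $\{|w|=r^{\rho+1}\}$ (noncritical) and $\operatorname{Im}b=0$ forcing one ray into $w=0$ at $t=-b/a$ (critical); you merely spell out the step the paper dismisses as ``not difficult to see,'' and you are more careful than the paper in extending the forward-ray statement of Proposition~\ref{prop2} to the two-sided maximal geodesic. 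A small point in your favor: you also prove the final clause (existence and uniqueness of the critical geodesic issuing from each $q\in U\setminus\{p\}$, via the radial-support description $\arg z\equiv\alpha$), which the paper's proof of this corollary does not address explicitly.
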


\begin{figure}
\centering
\begin{subfigure}{.4\textwidth}
  \centering
  \includegraphics[width=.6\linewidth]{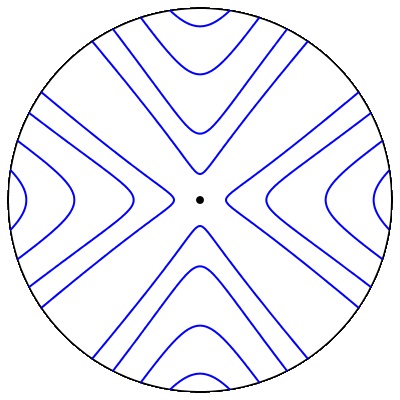}
  \caption{noncritical geodesics}
  \label{fig:noncritical}
\end{subfigure}
\begin{subfigure}{.4\textwidth}
  \centering
  \includegraphics[width=.6\linewidth]{critical}
  \caption{a critical geodesic}
  \label{fig:critical}
\end{subfigure}
\caption{Critical and noncritical geodesics on $(U,z)$.}
\label{fig:criticalvanoncriticalbirga}
\end{figure}

\begin{proof}
  By Proposition \ref{prop2} there exists $\alpha\in[0,2\pi)$ such that
\[
z(\sigma(t))=e^{i\alpha}(at+b)^{\frac{1}{\rho+1}}
\]
    for suitable $a>0$ and $b\in \mathbb{C}$ with $\operatorname{Im}\,b \ge 0$. Let $r$ be the radius of $(U,z)$. By definition $\sigma$ is noncritical if and only if $|z(\sigma(t))|\to r$ in both forward and backward time. Since $|z(\sigma(t))|=|at+b|^{\frac{1}{\rho+1}}$ we can see that $\sigma$ is noncritical if and only if $|at+b|\to r^{\rho+1}$ in both forward and backward time. Since $a>0$ it is not difficult to see that $|at+b|\to r^{\rho+1}$ both in forward and backward time  if and only if $\operatorname{Im}\,b \ne 0$.  Hence $\sigma$ is noncritical if and only if  $\operatorname{Im}\,b > 0$.   Consequently, $\sigma$ is critical if and only if $\operatorname{Im}\, b=0$. Moreover,  $|at+b|\to 0$ in a ray and hence $-\frac{b}{a}\in\{\varepsilon_-,\varepsilon_+\}$.
\end{proof}

\begin{corollary}
   Let $\nabla$ be a  meromorphic connection on a Riemann surface $S$. Let $p$ be a Fuchsian pole for $\nabla$ with real $\operatorname{Res}_p\nabla>-1$. Let $(U,z)$ be a chart adapted to $(\nabla,p)$. Let $\sigma\colon (\varepsilon_-,\varepsilon_+)\to U\setminus\{p\}$ be a critical geodesic. Then $\arg z(\sigma(t))=\mathrm{const}$. In particular $\mathrm{supp}(z(\sigma))$ is the support of a radius of $z(U)$.
\end{corollary}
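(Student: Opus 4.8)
The plan is to read off the conclusion directly from the explicit parametrization of critical geodesics already established in Corollary~\ref{noncriticalvacritical}(2). That result provides an $\alpha\in[0,2\pi)$ with
\[
z(\sigma(t))=e^{i\alpha}(at+b)^{\frac{1}{\rho+1}},
\]
where $a>0$, $b\in\mathbb{R}$, and $-b/a$ is one of the two endpoints $\varepsilon_\pm$; up to changing the initial point one may further assume $b=\varepsilon_-=0$. Since $\arg z(\sigma(t))$ depends only on the image of $\sigma$, I am free to use this normalization.

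The key step is the observation that the affine factor $at+b$ keeps a constant sign on the \emph{open} interval $(\varepsilon_-,\varepsilon_+)$. Indeed $t\mapsto at+b$ is strictly increasing (as $a>0$) and vanishes only at $t=-b/a$, which by hypothesis is an endpoint; hence on $(\varepsilon_-,\varepsilon_+)$ it is a nonzero real of constant sign, so $\arg(at+b)$ is the constant $0$ or $\pi$. Taking the argument of the displayed formula then gives $\arg z(\sigma(t))=\alpha+\frac{1}{\rho+1}\arg(at+b)$, which is independent of $t$; with the normalization $b=\varepsilon_-=0$ one simply has $at>0$ and $\arg z(\sigma(t))\equiv\alpha$. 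This is exactly the point that separates the critical from the noncritical case, where instead $\operatorname{Im} b>0$ forces $at+b$ to be genuinely complex and makes the argument vary.

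For the final assertion I would note that $\rho+1>0$, so $s\mapsto s^{1/(\rho+1)}$ is an increasing homeomorphism of $(0,\infty)$ onto itself; consequently the modulus $|z(\sigma(t))|$ sweeps out an interval of the form $(0,r)$ as $t$ ranges over $(\varepsilon_-,\varepsilon_+)$, with one ray limiting to the center $z(p)=0$ and the other reaching the boundary circle of radius $r$. Combined with the constancy of the argument, this shows that $\mathrm{supp}(z(\sigma))$ is precisely a radius of $z(U)$. I do not expect any genuine obstacle here: the whole statement is a one-line consequence of the parametrization, and the only detail requiring care is the sign analysis of $at+b$ on the open interval.
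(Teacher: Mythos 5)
Your proposal is correct and follows essentially the same route as the paper: invoke the parametrization $z(\sigma(t))=e^{i\alpha}(at+b)^{\frac{1}{\rho+1}}$ from the preceding corollary, normalize $b=\varepsilon_-=0$, and read off $\arg z(\sigma(t))\equiv\alpha$. Your extra care with the sign of $at+b$ on the open interval and the explicit check that $|z(\sigma(t))|$ sweeps out $(0,r)$ are fine (the paper leaves the latter implicit), but they do not change the argument.
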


\begin{proof}
By the previous corollary we can assume that $\varepsilon_-=0$ and that there exists $\alpha\in[0,2\pi)$ and $a>0$ such that
\[
z(\sigma(t))=e^{i\alpha}(at)^{\frac{1}{\rho+1}}.
\]
Hence $\arg z(\sigma(t))\equiv \alpha=\mathrm{const.}$
\end{proof}

\subsection{Singular flat metric on a chart  $(U,z)$ adapted to $(\nabla,p)$}\label{s:singfmonchartUz}
Let $\nabla$ be a meromorphic connection on a Riemann surface $S$. Let $p$ be a Fuchsian pole of $\nabla$ with real residue $\rho:=\operatorname{Res}_p\nabla>-1$. Let $(U,z)$ be a chart adapted to $(\nabla,p)$.  Note that the representation of $\nabla$ on $U$ is $\eta=\frac{\rho}{z}dz$. Let $g$ be a singular flat metric adapted to  $\nabla$ on $U$. Thanks to \cite[Lemma 3.3]{R1} it can be represented by
\begin{equation}\label{gniko`rinish}
  g^{\frac12}=|z|^\rho|dz|
\end{equation}
on $U$. We now collect a few properties of $\mathrm{dist}_g$, the distance induced by~$g$.

\begin{lemma}\label{2rrho+1}
Given $r>0$, let $g$ be a singular flat metric on $\Delta_r=\{z\in\mathbb{C}\mid |z|<r\}$ of the form~\eqref{gniko`rinish} with $\rho>-1$. Then 
\[
\mathrm{dist}_g(z_1,z_2)< \frac{2r^{\rho+1}}{\rho+1}
\]
for any $z_1$,~$z_2\in \Delta_r$.
\end{lemma}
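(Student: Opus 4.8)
The plan is to bound $\mathrm{dist}_g(z_1,z_2)$ from above by routing a connecting curve through a small neighbourhood of the singular point at the origin, so that the whole estimate reduces to a single explicit radial integral. Writing $L_g(\gamma)=\int_\gamma|z|^\rho\,|dz|$ for the $g$-length of a curve $\gamma$, the first observation is that the radial direction carries very little length near $0$ precisely because $\rho>-1$: along a radial segment one has $\int_0^{|z|}t^\rho\,dt=\frac{|z|^{\rho+1}}{\rho+1}$, an improper integral that converges exactly under the hypothesis $\rho>-1$. Thus the distance from any point to the origin is already controlled, and the triangle inequality suggests taking $0$ as an intermediate point.

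Concretely, I would first dispose of the degenerate cases where $z_1=0$ or $z_2=0$ (a single radial segment then gives the bound), and assume $z_1,z_2\ne0$. For a fixed small $\delta>0$ I would build an admissible curve $\gamma_\delta$ in $\Delta_r\setminus\{0\}$ joining $z_1$ to $z_2$ out of three pieces: the radial segment from $z_1$ inward to $\delta\,z_1/|z_1|$, a circular arc of radius $\delta$ joining $\delta\,z_1/|z_1|$ to $\delta\,z_2/|z_2|$, and the radial segment from $\delta\,z_2/|z_2|$ outward to $z_2$. Each radial piece has $g$-length $\int_\delta^{|z_i|}t^\rho\,dt=\frac{|z_i|^{\rho+1}-\delta^{\rho+1}}{\rho+1}$, while the arc lies on $\{|z|=\delta\}$, where $g^{1/2}=\delta^\rho|dz|$, so its $g$-length is at most $2\pi\delta\cdot\delta^\rho=2\pi\delta^{\rho+1}$.

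Summing the three contributions gives
\[
\mathrm{dist}_g(z_1,z_2)\le L_g(\gamma_\delta)\le \frac{|z_1|^{\rho+1}+|z_2|^{\rho+1}-2\delta^{\rho+1}}{\rho+1}+2\pi\delta^{\rho+1},
\]
and letting $\delta\to0^+$ the arc contribution vanishes (again because $\rho+1>0$), yielding
\[
\mathrm{dist}_g(z_1,z_2)\le\frac{|z_1|^{\rho+1}+|z_2|^{\rho+1}}{\rho+1}.
\]
Since $|z_1|,|z_2|<r$ and $t\mapsto t^{\rho+1}$ is increasing, the right-hand side is strictly less than $\frac{2r^{\rho+1}}{\rho+1}$, which is the claim. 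The only point that really needs care — the ``obstacle'' — is that the origin is a singular point of $g$ and so is not available as an honest vertex of a curve in the regular locus $\Delta_r\setminus\{0\}$; the shrinking arc of radius $\delta$ is exactly the device that avoids the singularity while simultaneously showing its presence costs nothing in the limit, and it is the single condition $\rho>-1$ that makes both the convergence of the radial integral and the vanishing of the arc length go through.
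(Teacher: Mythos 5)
Your proof is correct and is essentially the paper's argument: the paper simply concatenates the two radial segments $z_1\to 0\to z_2$ into one curve through the origin and evaluates the same convergent integral, obtaining directly $\frac{|z_1|^{\rho+1}+|z_2|^{\rho+1}}{\rho+1}<\frac{2r^{\rho+1}}{\rho+1}$. Your $\delta$-arc detour around the singularity is only a minor technical regularization of the same construction (the paper's improper integral converges precisely because $\rho>-1$, so passing through $0$ costs nothing), and in the limit $\delta\to0^+$ you recover exactly the paper's bound.
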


\begin{proof}
  It is enough to show that there exists a curve $\sigma\colon [-1,1]\to \Delta_r$ connecting $z_1$ and $z_2$ with $g-$length bounded by $\frac{2r^{\rho+1}}{\rho+1}$. Set
\[
\sigma(t):=\begin{cases}
                 -z_1t, & \mbox{if } t\in[-1,0], \\
                 z_2t, & \mbox{if } t\in[0,1].
               \end{cases}
\]
Then $\sigma(-1)=z_1$ and $\sigma(1)=z_2$. It is not difficult to compute the $g-$length of $\sigma$:
\begin{equation}\label{sigmaguzunligi}
  \int_{\sigma}|z|^\rho|dz|=|z_1|^{\rho+1}\int_{-1}^{0}t^{\rho}dt+|z_2|^{\rho+1}\int_{0}^{1}t^{\rho}dt
  =\frac{|z_1|^{\rho+1}+|z_2|^{\rho+1}}{\rho+1}<\frac{2r^{\rho+1}}{\rho+1}
\end{equation}
and we are done.
\end{proof}

\begin{remark}
  If $\rho\le-1$ then $\mathrm{dist}_g$ is unbounded on~$\Delta_r$. In particular, a computation similar to \eqref{sigmaguzunligi} shows that an Euclidean segment joining $0$ and any point $z_2\in\Delta_r$ has infinite $g-$length.
\end{remark}

\begin{lemma}\label{lengthcriticalg}
   Let $\nabla$ be a meromorphic connection on a Riemann surface $S$. Let $p$ be a Fuchsian pole of~$\nabla$ with real residue greater than $-1$. Let $(U,z)$ be a chart adapted to $(\nabla,p)$. Let $g$ be a singular flat metric adapted to $\nabla$ on $U$. Then all maximal critical geodesics in $U$ have the same $g-$length.
\end{lemma}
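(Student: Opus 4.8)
The plan is to reduce everything to the explicit description of critical geodesics obtained in the preceding corollaries and then to carry out a single elementary length computation which turns out to be independent of the geodesic.

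First I would invoke the characterization of critical geodesics (Corollary~\ref{noncriticalvacritical}): for a maximal critical geodesic $\sigma\colon(\varepsilon_-,\varepsilon_+)\to U\setminus\{p\}$ there exist an angle $\alpha\in[0,2\pi)$ and $a>0$ such that, after the normalization $b=\varepsilon_-=0$, we have $z(\sigma(t))=e^{i\alpha}(at)^{1/(\rho+1)}$ for $t\in(0,\varepsilon_+)$. In particular $\arg z(\sigma(t))\equiv\alpha$, so the support of $z\circ\sigma$ lies on a radius of $z(U)=\Delta_r$. As $t\downarrow 0$ we get $z(\sigma(t))\to 0$, i.e.\ one ray of the geodesic runs into the pole $p$, while maximality in $U$ forces the other ray to reach the boundary, so that $|z(\sigma(\varepsilon_+))|=r$, equivalently $a\varepsilon_+=r^{\rho+1}$. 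Hence the support of $z\circ\sigma$ is the entire open radius $\{e^{i\alpha}s:0<s<r\}$.

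Next I would compute the $g$-length directly from the form~\eqref{gniko`rinish} of the adapted metric. Parametrizing the radius by $s=|z|\in(0,r)$, so that $|z|^\rho\,|dz|=s^\rho\,ds$, the length is
\[
\int_\sigma|z|^\rho\,|dz|=\int_0^r s^\rho\,ds=\frac{r^{\rho+1}}{\rho+1},
\]
which is finite precisely because $\rho>-1$. Since the right-hand side depends only on the radius $r$ of $(U,z)$ and on $\rho$, and not on the angle $\alpha$ (nor on the parametrization constant $a$), every maximal critical geodesic in $U$ has the same $g$-length $r^{\rho+1}/(\rho+1)$, which is the assertion.

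There is essentially no obstacle here beyond bookkeeping: the only place where the hypothesis $\rho>-1$ is genuinely used is the convergence of the integral at the pole (compare the remark following Lemma~\ref{2rrho+1}, where for $\rho\le -1$ the corresponding length is infinite), and the independence of $\alpha$ is immediate once the support has been identified with a radius of $\Delta_r$.
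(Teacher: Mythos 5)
Your proposal is correct and follows essentially the same route as the paper: the paper likewise invokes Corollary~\ref{noncriticalvacritical} to identify the support of a maximal critical geodesic with a Euclidean segment (a radius) joining $0$ to the boundary of $z(U)=\Delta_r$, and then performs the same computation as in \eqref{sigmaguzunligi} to get a length depending only on $r$ and $\rho$. Your write-up merely makes explicit the value $r^{\rho+1}/(\rho+1)$ and the role of $\rho>-1$ for convergence, both of which are implicit in the paper's argument.
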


\begin{proof}
By Corollary \ref{noncriticalvacritical} we can see that the support of the representation of a critical geodesic on $z(U)$ is an Euclidean segment joining $0$ with a boundary point of $z(U)=\Delta_r$. Then a computation similar to \eqref{sigmaguzunligi} shows that the length depends only the radius~$r$ of $z(U)$.
\end{proof}

In the next lemma we study self-intersections of noncritical geodesics  around a Fuchsian pole with residue in $(-1,-\frac{1}{2})$.

\begin{lemma}\label{fuchsianyaqin}
 Let $\nabla$ be a meromorphic connection on a Riemann surface $S$. Let $p$ be a Fuchsian pole of~$\nabla$ with real residue $\rho:=\operatorname{Res}_p\nabla$. Let $(U,z)$ be a chart adapted to $(\nabla,p)$ and $g$ is a singular flat metric adapted to $\nabla$ on $U$. 
 
 \begin{enumerate}
 \item If $\rho\in(-1,-\frac{1}{2})$ then there exists $\delta_0>0$ such that
any noncritical geodesic $\sigma:(\varepsilon_-,\varepsilon_+)\to U\setminus\{p\}$ of $\nabla$ on $U$ with $\mathrm{dist}_g(\mathrm{supp}(\sigma),p)<\delta_0$
intersects itself at least once. More precisely, any noncritical geodesic entering  $U^{\delta_0}=\{q\in U\mid\mathrm{dist}_g(q,p)<\delta_0\}$ intersects itself at least once before exiting $U^{\delta_0}$.

\item If $\rho>-1$ take $\alpha_1$,~$\alpha_2\in\mathbb{R}$ so that
\[
0<|\alpha_1-\alpha_2|<\frac{\pi}{\rho+1}.
\] 
If $L_1$ and $L_2$ are parametrizations of maximal horizontal Euclidean segments in 
\[
F_{\tau}=\{w\in H_\rho\mid 0<\operatorname{Im}\,w<\tau\},
\]
with $\tau$ small enough, then the geodesics
$\chi_\rho^{\alpha_1}\circ L_1$ and $\chi_\rho^{\alpha_2}\circ L_2$
have at least one common point.
\end{enumerate}
\end{lemma}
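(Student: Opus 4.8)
The plan is to represent both geodesics in polar coordinates centred at $p$ and to reduce each assertion to an intermediate value argument for the radial profile. Write $\mu=\frac1{\rho+1}>0$. A maximal horizontal segment $L$ at height $s=\operatorname{Im}w>0$ inside $H_\rho$ consists of the points $w$ with $\arg w\in(\theta_0,\pi-\theta_0)$ and $|w|=s/\sin(\arg w)$, where $\sin\theta_0=s/r^{\rho+1}$, its endpoints lying on $|w|=r^{\rho+1}$. Parametrising the image geodesic $\chi_\rho^\alpha(L)$ by its output angle $\psi=\alpha+\mu\arg w$ — a strictly increasing function of the position on $L$ — the corresponding image point has argument $\psi$ and modulus
\[
R(\psi)=\Bigl(\frac{s}{\sin\bigl((\psi-\alpha)/\mu\bigr)}\Bigr)^{\mu},\qquad \psi\in\bigl(\alpha+\mu\theta_0,\;\alpha+\mu\pi-\mu\theta_0\bigr).
\]
Thus $R$ is unimodal: it equals $r$ at the two endpoints and attains its minimum $s^{\mu}$ at the midpoint $\psi=\alpha+\mu\pi/2$, where the geodesic comes Euclidean-closest to $p$. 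Two such geodesics meet precisely when their radial profiles agree at a common value of $\psi$, the argument being read modulo $2\pi$.

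For part (2) I would assume $\alpha_1<\alpha_2$, so that $\alpha_2-\alpha_1\in(0,\mu\pi)$ by hypothesis, and let $R_1,R_2$ be the profiles of $\chi_\rho^{\alpha_1}\!\circ L_1$ and $\chi_\rho^{\alpha_2}\!\circ L_2$ on their $\psi$-intervals $I_1,I_2$. As $\tau\to0$ the angles $\theta_0^{(j)}=\arcsin(s_j/r^{\rho+1})\to0$, so for $\tau$ small the intervals overlap in $[\alpha_2+\mu\theta_0^{(2)},\,\alpha_1+\mu\pi-\mu\theta_0^{(1)}]$, which is nondegenerate exactly because $\alpha_2-\alpha_1<\mu\pi$. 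At its left endpoint we are on $\partial I_2$, so $R_2=r>R_1$; at its right endpoint we are on $\partial I_1$, so $R_1=r>R_2$. Hence $R_1-R_2$ changes sign and the intermediate value theorem produces $\psi^\ast$ with $R_1(\psi^\ast)=R_2(\psi^\ast)<r$; the two geodesics then share the point of modulus $R_1(\psi^\ast)$ and argument $\psi^\ast$, which lies in $z(U)$. The hypothesis $\alpha_1\neq\alpha_2$ is what places the two endpoints of the overlap on different geodesics and is genuinely needed, since for $\alpha_1=\alpha_2$ and $s_1\neq s_2$ the profiles are strictly ordered and never meet.

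For part (1), with $\rho\in(-1,-\tfrac12)$ we have $\mu>2$. A noncritical geodesic is $\chi_\rho^\alpha\!\circ L$ with $L$ at height $s=\operatorname{Im}b>0$ (Corollary~\ref{noncriticalvacritical}), and its total angular sweep $\mu(\pi-2\theta_0)$ exceeds $2\pi$ as soon as $\theta_0$, equivalently $s/r^{\rho+1}$, is small. I would then apply the intermediate value theorem to $h(\psi)=R(\psi)-R(\psi+2\pi)$ on $(\psi_-,\psi_+-2\pi)$: at the left end $R(\psi_-)=r$ exceeds the interior value $R(\psi_-+2\pi)$, while at the right end $R(\psi_+)=r$ exceeds $R(\psi_+-2\pi)$, so $h$ vanishes at some $\psi^\ast$. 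Since $e^{i\psi^\ast}=e^{i(\psi^\ast+2\pi)}$ and these are distinct parameters, this is a genuine self-intersection. To turn ``$s$ small'' into a distance condition I would use the adapted metric $g^{\frac12}=|z|^\rho|dz|$: the closest approach sits at Euclidean distance $s^{\mu}$, hence at $g$-distance $s^{\mu(\rho+1)}/(\rho+1)=s/(\rho+1)$ from $p$, so $\operatorname{dist}_g(\operatorname{supp}\sigma,p)=s/(\rho+1)$; choosing $\delta_0$ with $(\rho+1)\delta_0<r^{\rho+1}\sin\!\bigl(-\tfrac\pi2(1+2\rho)\bigr)$ forces $\theta_0$ small enough and proves the first assertion.

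The main obstacle is the sharper claim that the self-intersection occurs \emph{before exiting} $U^{\delta_0}$. By the symmetry of $R$ about its minimum the innermost self-crossing occurs at the passes $\psi=\psi_m\pm\pi$ (with $\psi_m=\alpha+\mu\pi/2$), where $R=(s/\cos(\pi/\mu))^\mu$; thus the self-intersection loop lies within $g$-distance $\frac{s}{(\rho+1)\cos(\pi/\mu)}$ of $p$, a fixed multiple $1/\cos(\pi/\mu)>1$ of the closest approach rather than the closest approach itself. The delicate point is to trap this loop inside $U^{\delta_0}$: the arc of $\sigma$ lying in $U^{\delta_0}$ sweeps angle $2\mu\arccos\!\bigl(s/((\rho+1)\delta_0)\bigr)$, which exceeds $2\pi$ — and hence contains a full wrap and a self-crossing — exactly when $s<(\rho+1)\delta_0\cos(\pi/\mu)$. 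I would therefore phrase ``entering $U^{\delta_0}$'' against this slightly smaller radius (equivalently, shrink the entering threshold by the factor $\cos(\pi/\mu)$), so that once $\sigma$ is deep enough inside $U^{\delta_0}$ the self-crossing is guaranteed to happen while $\sigma$ is still inside it.
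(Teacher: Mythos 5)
Your proof is correct, and it takes a more explicit, quantitative route than the paper's. For part (1) the paper never writes down a radial profile: it notes that $\arg z(\sigma_\tau(t))$ is monotone with total variation $\beta(\tau)/(\rho+1)>2\pi$ once $\tau$ is small (since $\rho+1<\tfrac12$ and $\beta(\tau)\to\pi$), and that both rays of a noncritical geodesic tend to $\partial U$, concluding the self-crossing by a soft topological argument. Your intermediate value argument applied to $h(\psi)=R(\psi)-R(\psi+2\pi)$ is essentially the justification hidden inside that soft step, so the two proofs are close in spirit, but yours additionally locates the crossing — which matters below. For part (2) the difference is more substantial: the paper uses the rotation identity $\chi_\rho^{\alpha_1}\circ L=\chi_\rho^{\alpha_2}\circ(e^{i\beta}L)$ with $\beta=(\rho+1)|\alpha_1-\alpha_2|\in(0,\pi)$, reducing the claim to the elementary fact that a maximal chord at height $<\tau$ and its $e^{i\beta}$-rotate intersect in the half-disc for $\tau$ small; you instead compare the two radial profiles $R_1,R_2$ on the overlap of their $\psi$-intervals and apply the intermediate value theorem directly in the image. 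Both are sound; the paper's reduction is shorter, while your computation makes explicit how small $\tau$ must be relative to $|\alpha_1-\alpha_2|$ (nondegeneracy of the overlap), a point the paper leaves implicit, and your continuously tracked argument $\psi$ correctly handles the mod-$2\pi$ identification when $\tfrac1{\rho+1}$ is large.

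Your closing discussion of the ``more precisely'' sentence is a genuine sharpening, not a defect of your argument. You are right that the innermost self-crossing sits at $g$-distance $\frac{s}{(\rho+1)\cos(\pi(\rho+1))}$ from $p$, strictly larger than the closest approach $\frac{s}{\rho+1}$; consequently, for \emph{any} single radius $\delta_0$, a geodesic with $s$ just below $(\rho+1)\delta_0$ enters $U^{\delta_0}$ while all of its self-crossings lie outside $U^{\delta_0}$, so the refined claim genuinely requires two radii, exactly as you propose (entering the ball shrunk by the factor $\cos(\pi(\rho+1))$ forces a crossing inside $U^{\delta_0}$). The paper's own proof does not address this point at all: it sets $\delta_0=\operatorname{dist}_g(\mathrm{supp}(\sigma_{\tau_0}),p)$ and establishes only the first sentence (self-intersection somewhere), leaving the ``before exiting $U^{\delta_0}$'' refinement unproved. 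Fortunately, all later uses of Lemma~\ref{fuchsianyaqin} (e.g., in the remark following Proposition~\ref{nonemptyinteriorandnonempty} and in Corollary~\ref{boundaryburchak}) only need self-intersection in a fixed neighbourhood of $p$, which both your two-radius version and the paper's weaker statement provide.
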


\begin{figure}[h]
    \centering
    \includegraphics[width=0.30\textwidth]{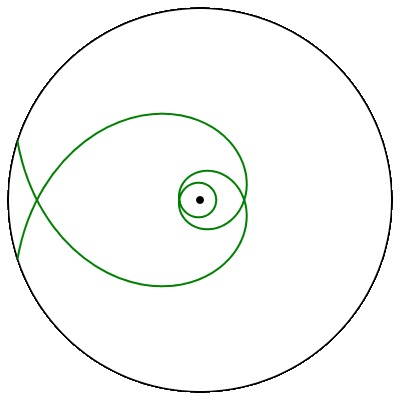}
    \caption{A self-intersecting noncritical geodesic on $z(U)$ (here $\rho=-0.9$).}
    \label{fig:kesadigan}
\end{figure}

\begin{proof}
 Let $r$ be the radius of $(U,z)$. For $0<\tau<r^{\frac{1}{\rho+1}}$, set
\[
l_\tau=\{w\in H_\rho\mid \operatorname{Im}\,w=\tau\}.
\]
 Let $L_\tau\colon(\varepsilon_-,\varepsilon_+)\to H_\rho$ be the parametrization of the horizontal Euclidean segment $l_\tau\subset H_\rho$ given by $L_\tau(t)= t+i\tau$.  Let
\[
\beta(\tau):=\sup_{w\in l_\tau}\arg w-\inf_{w\in l_\tau}\arg w=|\arg L_\tau(\varepsilon_-)-\arg L_\tau(\varepsilon_+)|.
\]
 Note that $\beta(\tau)$ is a decreasing function and that 
 \[
 \lim\limits_{\delta\searrow0}\beta(\tau)=\pi.
 \]
In case (1), since ${\rho+1}<\frac12$, we can choose  $\tau_0$ small enough so that $2\pi(\rho+1)<\beta(\tau)<\pi$ for all $0<\tau\le\tau_0$.

Fix $\alpha\in[0,2\pi)$. Then  
 \[
 \sigma_\tau(t):=\sigma_\tau^\alpha(t):=\chi^\alpha_\rho(L_\tau(t))=e^{i\alpha}(t+i\tau)^{\frac{1}{\rho+1}}
 \]
 is the representation of a noncritical geodesic of $\nabla$ on $U$. Set 
 \[
 \beta_\tau^1=\left|\lim\limits_{t\nearrow\varepsilon_+} \arg\sigma_\tau(t)-\lim\limits_{t\searrow\varepsilon_-} \arg\sigma_\tau(t)\right|.
 \]
It is easy to see that $\beta_\tau^1=\frac{1}{\rho+1}\beta_\tau$ and that $\arg\sigma_\tau(t)$ is a monotone function of $t$. Since $\beta_\tau^1>2\pi$ and we know that both rays of $\sigma_\tau$ tends to the boundary of $U$ and that $\arg\sigma_\tau(t)$ is monotone, we conclude that $\sigma_\tau$  intersects itself at least once. Then the image by $\chi^\alpha_\rho$ of any maximal horizontal interval in $F_{\tau_0}$  intersects itself. Set 
\[
\delta_0:=\mathrm{dist}_g(\mathrm{supp}(\sigma_{\tau_0}),p).
\]
Since multiplication by $e^{i\alpha}$ is an isometry for $g$ we deduce that any maximal noncritical geodesic $\sigma$ with  $\mathrm{dist}_g(\mathrm{supp}(\sigma),p)<\delta_0$ intersects itself, as claimed.

 In order to prove the second statement, without loss of generality we can assume that $\alpha_1>\alpha_2$. Set 
 \[
 \beta:=(\rho+1)(\alpha_1-\alpha_2)
 \]
and  $F_{\tau}^\beta=e^{i\beta}F_\tau$. Since $0<\beta<\pi$ we have $F_{\tau}^\beta\cap F_{\tau}\ne\emptyset.$ Let $L_\tau^\beta(t) := e^{i\beta}L_\tau(t).$ Since $0<\beta<\pi$, there exists $\tau_0$ such that  for any $0<\tau<\tau_0$ the segments  $l_\tau$ and $l_\tau^\beta$ have a common point. By
\[
\chi^{\alpha_1}_\rho\bigl(L_\tau(t)\bigr)=\chi^{\alpha_2}_\rho\bigl(L_\tau^\beta(t)\bigr)
\] 
and $l_\tau\cap l_\tau^\beta\ne\emptyset$, we can see that the supports of $\chi^{\alpha_1}_\rho\circ L_\tau$ and $\chi^{\alpha_2}_\rho\circ L_\tau$ have a common point.

 Let $L_1$ and $L_2$ be any maximal horizontal Euclidean segments in $F_\tau$. Since  $L_\tau$ and $L_\tau^\beta$ have a common point we can see that $L_1$ and $e^{i\beta}L_2$ have a common point. It follows as above that $\chi_\rho^{\alpha_1}\circ L_1$ and $\chi_\rho^{\alpha_2}\circ L_2$
  have at least one common point.
\end{proof}

\begin{definition}
Let $\nabla$ be a  meromorphic connection on a Riemann surface $S$. Let $p$ be a Fuchsian pole for $\nabla$ with $\rho:=\operatorname{Res}_p\nabla>-1$. Let $(U,z)$ be a chart adapted to $(\nabla,p)$. Let $\sigma\colon(\varepsilon_-,\varepsilon_+)\to U\setminus\{p\}$ be a maximal geodesic for $\nabla$ on $U$. We say $\sigma$ has \emph{$\alpha-$direction} for some $\alpha\in[0,2\pi)$ if $$z(\sigma)=\chi^\alpha_\rho(at+b)$$  for some $a\in\mathbb{R}^*$ and $b\in \mathbb{C}$. We shall write 
\[
\angle(\sigma):=\alpha.
\]
\end{definition}

Of course the notion of direction depends on $(U,z)$. In particular, if $\sigma$  has {$\alpha-$direction} for some $\alpha\in[0,2\pi)$ on $(U,z)$, then we can introduce a new chart $(U,z')$ with $z'=e^{-i\alpha}z$ so that $\sigma$  has {$0-$direction} in~$(U,z')$. 

\begin{corollary}\label{lengthkesish}
Let $\nabla$ be a  meromorphic connection on a Riemann surface $S$. Let $p$ be a Fuchsian pole for $\nabla$ with $\rho:=\operatorname{Res}_p\nabla>-1$. Let $(U,z)$ be a chart adapted to $(\nabla,p)$ and $g$ a singular flat metric adapted to $\nabla$ on $U$. Let $\beta_1$,~$\beta_2\in[0,2\pi)$ with $\beta_1<\beta_2$ be such that
\[
\min(\beta_2-\beta_1,2\pi+\beta_1-\beta_2)<\frac{\pi}{\rho+1}.
\]
Set 
\begin{equation}\label{Ialpha}
  I^\rho[\beta_1,\beta_2]:=\begin{cases}
                          [\beta_1,\beta_2], & \mbox{if } \beta_2-\beta_1<\frac{\pi}{\rho+1}, \\
                          [0,\beta_1]\cup[\beta_2,2\pi), & \mbox{otherwise}.
                        \end{cases}
\end{equation}
Then there exists a number $\delta_0>0$ depending only on the length of $I^\rho[\beta_1,\beta_2]$  such that any two noncritical geodesics $\sigma_1$ and $\sigma_2$ entering  
\[
U^{\delta_0}=\{q\in U\mid \mathrm{dist}_g(q,p)<\delta_0\}
\] 
with $\angle(\alpha_1),\angle(\alpha_2)\in I^\rho[\beta_1,\beta_2]$ and $\angle(\sigma_1)\ne\angle(\sigma_2)$ have a common point.

\end{corollary}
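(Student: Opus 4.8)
The goal is to show that two noncritical geodesics entering a sufficiently small $g$-ball $U^{\delta_0}$ around the pole $p$, with distinct directions both lying in the arc $I^\rho[\beta_1,\beta_2]$, must intersect. The whole point is that near a Fuchsian pole with $\rho>-1$ the chart $z$ ``unfolds'' into the half-plane model via $w=z^{\rho+1}$ (up to the rotation $e^{i\alpha}$), and in that model every noncritical geodesic is the image under $\chi_\rho^\alpha$ of a horizontal segment sitting in a thin horizontal strip $F_\tau$. So the plan is to reduce the statement to Lemma~\ref{fuchsianyaqin}(2), which already gives intersection of two such strip-geodesics precisely under the angular gap hypothesis $0<|\alpha_1-\alpha_2|<\frac{\pi}{\rho+1}$.

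Let me set up the reduction. First I would fix the two directions $\alpha_i:=\angle(\sigma_i)$, $i=1,2$, both in $I^\rho[\beta_1,\beta_2]$ and distinct. The hypothesis
\[
\min(\beta_2-\beta_1,\,2\pi+\beta_1-\beta_2)<\frac{\pi}{\rho+1}
\]
together with the definition \eqref{Ialpha} of $I^\rho[\beta_1,\beta_2]$ is engineered exactly so that \emph{any} two points of $I^\rho[\beta_1,\beta_2]$ differ, as directions, by less than $\frac{\pi}{\rho+1}$: in the first case of \eqref{Ialpha} the interval $[\beta_1,\beta_2]$ has length $<\frac{\pi}{\rho+1}$, and in the second case the complementary arc $[0,\beta_1]\cup[\beta_2,2\pi)$ (which wraps around $0$) has length $2\pi-(\beta_2-\beta_1)=2\pi+\beta_1-\beta_2<\frac{\pi}{\rho+1}$. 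Hence I would record the key inequality $0<|\alpha_1-\alpha_2|<\frac{\pi}{\rho+1}$, which is exactly the hypothesis needed to invoke Lemma~\ref{fuchsianyaqin}(2).

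Next I would make the strip threshold uniform. By Corollary~\ref{noncriticalvacritical}(1), each $\sigma_i$ is $z(\sigma_i(t))=e^{i\alpha_i}(a_it+b_i)^{1/(\rho+1)}$ with $a_i>0$, $\operatorname{Im}b_i>0$; equivalently $z(\sigma_i)=\chi_\rho^{\alpha_i}(L_i)$ for the horizontal segment $L_i(t)=a_it+b_i$, whose height is $\operatorname{Im}b_i$. The height of $L_i$ in the $w=z^{\rho+1}$ coordinate is controlled by how deep $\sigma_i$ penetrates: by Lemma~\ref{2rrho+1} and the explicit form of the metric \eqref{gniko`rinish}, the $g$-distance from $\operatorname{supp}(\sigma_i)$ to $p$ is comparable to $(\operatorname{Im}b_i)$ up to the universal constant $1/(\rho+1)$, so I would choose $\delta_0$ small enough that ``$\operatorname{dist}_g(\operatorname{supp}(\sigma_i),p)<\delta_0$'' forces $\operatorname{Im}b_i<\tau$, where $\tau=\tau(|\alpha_1-\alpha_2|)$ is the threshold furnished by Lemma~\ref{fuchsianyaqin}(2) for the gap $|\alpha_1-\alpha_2|$. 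Since every gap arising from $I^\rho[\beta_1,\beta_2]$ is at most its total length, I can take a single $\tau$ (hence a single $\delta_0$) that works for all admissible pairs, which is why $\delta_0$ depends only on the length of $I^\rho[\beta_1,\beta_2]$. Then $L_1,L_2$ are maximal horizontal segments in $F_\tau$, and Lemma~\ref{fuchsianyaqin}(2) gives $\chi_\rho^{\alpha_1}\circ L_1$ and $\chi_\rho^{\alpha_2}\circ L_2$ a common point; pulling back by $z$ gives the desired common point of $\sigma_1$ and $\sigma_2$.

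The one genuinely delicate step is the uniformity of the threshold $\tau$ across all admissible direction pairs, i.e.\ making sure a single $\delta_0$ depending only on $|I^\rho[\beta_1,\beta_2]|$ suffices. This requires checking that the $\tau_0$ produced in the proof of Lemma~\ref{fuchsianyaqin}(2) can be taken monotone in the gap $\beta=(\rho+1)|\alpha_1-\alpha_2|$ (smaller gaps tolerate larger strips), so that the worst case — the maximal gap, equal to the length of $I^\rho[\beta_1,\beta_2]$ — dictates the common threshold. One must also confirm that the rotation $e^{i\alpha}$ being a $g$-isometry, as noted in the proof of Lemma~\ref{fuchsianyaqin}(1), lets one convert the Euclidean strip-height bound into the rotation-invariant $g$-distance condition defining $U^{\delta_0}$. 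Everything else is bookkeeping with the explicit parametrizations.
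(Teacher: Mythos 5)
Your proposal is correct and takes essentially the same route as the paper: the paper's proof also reduces the statement directly to Lemma~\ref{fuchsianyaqin}(2), after noting that the hypothesis on $\beta_1$, $\beta_2$ (together with a rotation disposing of the wrap-around case of $I^\rho[\beta_1,\beta_2]$) forces the angular gap $0<|\alpha_1-\alpha_2|<\frac{\pi}{\rho+1}$ between the two distinct noncritical directions. The only difference is that you make explicit the bookkeeping the paper leaves implicit, namely the translation between the $g$-distance condition defining $U^{\delta_0}$ and the strip height $\tau$ (via $\operatorname{dist}_g(z,p)=|z|^{\rho+1}/(\rho+1)$) and the uniformity of the threshold over all admissible direction pairs.
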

\begin{proof}
Without loss of generality we can assume that $\beta_2-\beta_1<\frac{\pi}{\rho+1}$. So  $I^\rho[\beta_1,\beta_2]=[\beta_1,\beta_2]$. Since $\angle(\sigma_1)\ne\angle(\sigma_2)$, the geodesics $\sigma_1$ and $\sigma_2$ are distinct. Since they are noncritical, the assertion follows from Lemma \ref{fuchsianyaqin}.(2).
\end{proof}

\subsection{The $\omega$-limit sets of  simple geodesics around Fuchsian poles}\label{sec5.3}
In this section we study the $\omega$-limit set of a simple geodesic around a Fuchsian pole with real residue greater than $-1$.

\begin{lemma}\label{cheklitakes}
  Let $\nabla$ be a meromorphic connection on a Riemann surface $S$. Set $S^0=S\setminus\Sigma$, where $\Sigma$ is the set of poles of $\nabla$. Let $p$ be a Fuchsian pole of $\nabla$ with $\rho:=\operatorname{Res}_p\nabla>-1$. Let $(U,z)$ be a chart adapted to $(\nabla,p)$ and $g$  a singular flat metric adapted to $\nabla$ on $U$.   Let $\sigma:[0,\varepsilon)\to S^0$ be a maximal simple geodesic for~$\nabla$. Then there exists a number $\delta_0>0$ such that $\sigma$ can enter $U^{\delta_0}=\{q\in U:\mathrm{dist}_g(q,p)<\delta_0\}$ only along finitely many directions.
\end{lemma}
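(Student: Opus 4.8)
The plan is to argue by contradiction, combining a pigeonhole argument on the circle of possible directions with the intersection statement of Corollary~\ref{lengthkesish} and the hypothesis that $\sigma$ is simple. The starting observation is that, because $\sigma$ takes values in $S^o$ and does not meet the pole, each maximal time-interval during which $\sigma$ stays inside $U$ is an excursion along which $\sigma$ coincides with a single maximal geodesic of $\nabla$ on $U$; by Corollary~\ref{noncriticalvacritical} such an excursion is either noncritical (both ends tending to $\partial U$) or critical. A critical excursion reaching into $U^{\delta_0}$ would have to tend to $p$, which (being terminal behaviour as $t\uparrow\varepsilon$) can happen at most once and contributes no infinite family of directions; so we may restrict attention to noncritical excursions, each of which carries a well-defined direction $\angle(\sigma_i)=\alpha_i\in[0,2\pi)$ in the sense of the direction definition preceding Corollary~\ref{lengthkesish}. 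The goal thus becomes to bound the number of distinct values $\alpha_i$ among excursions that penetrate $U^{\delta_0}$.

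\textbf{Construction of $\delta_0$.} Since $\rho>-1$ we have $\frac{\pi}{\rho+1}>0$, so the circle $[0,2\pi)$ can be covered by finitely many arcs $I_1,\dots,I_N$, each of the form $I^\rho[\beta_1^j,\beta_2^j]$ as in~\eqref{Ialpha} and each of length strictly less than $\frac{\pi}{\rho+1}$. For every $j$, Corollary~\ref{lengthkesish} produces a radius $\delta_j>0$ such that any two noncritical geodesics entering $U^{\delta_j}$ with distinct directions both lying in $I_j$ must share a common point. I would then set
\[
\delta_0:=\min_{1\le j\le N}\delta_j>0.
\]
The monotonicity $U^{\delta_0}\subseteq U^{\delta_j}$ guarantees that an excursion reaching $U^{\delta_0}$ also enters every $U^{\delta_j}$, so Corollary~\ref{lengthkesish} remains applicable for each arc with this single, uniform $\delta_0$.

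\textbf{The contradiction.} Suppose $\sigma$ entered $U^{\delta_0}$ along infinitely many distinct directions. By the pigeonhole principle, infinitely many of these directions fall in one arc $I_j$; choose two excursions of $\sigma$ whose directions $\alpha_m\ne\alpha_n$ both lie in $I_j$. Each is a maximal noncritical geodesic on $U$ entering $U^{\delta_0}\subseteq U^{\delta_j}$, so Corollary~\ref{lengthkesish} yields a point common to both excursions, lying in $U$. Since distinct directions force the two excursions to be distinct subarcs of $\sigma$, this common point is a genuine self-intersection of $\sigma$, contradicting simplicity. Hence only finitely many directions can occur, which is the assertion. The step I expect to require the most care is the reduction establishing that every excursion into $U^{\delta_0}$ is exactly one full maximal noncritical geodesic on $U$ carrying a single direction: this is what makes the common point supplied by Corollary~\ref{lengthkesish} an honest self-intersection of $\sigma$ rather than merely an abstract crossing of two auxiliary geodesics, and it is where the critical/terminal behaviour near $p$ (handled via Corollary~\ref{noncriticalvacritical} and Theorem~\ref{rhoge-1}) must be cleanly isolated.
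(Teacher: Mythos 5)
Your proposal is correct and follows essentially the same route as the paper: cover $[0,2\pi)$ by finitely many arcs of length less than $\frac{\pi}{\rho+1}$, get $\delta_0$ from Corollary~\ref{lengthkesish}, dispose of the critical-direction case separately, and use pigeonhole plus simplicity of $\sigma$ to derive a contradiction from two distinct directions in one arc. The only cosmetic difference is that you take $\delta_0=\min_j\delta_j$ over the arcs, while the paper uses a single $\delta_0$ directly, since Corollary~\ref{lengthkesish} gives a $\delta_0$ depending only on the common arc length.
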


\begin{proof}
 Pick a positive integer $n>1$ such that $\beta:=\frac{2\pi}{n}<\frac{\pi}{\rho+1}$. Choose $\beta_j\in(0,2\pi)$ for $j=1,...,n$ such that
\[
\bigcup_{j=1}^{n-1} [\beta_j,\beta_{j+1}]\cup I^\rho[\beta_n,\beta_{1}]=[0,2\pi)
\]
 and such that the lengths of $I_j:=[\beta_j,\beta_{j+1}]$ for $j=1,...,n-1$ and of $I_n:=I^\rho[\beta_n,\beta_{1}]=[0,\beta_1]\cup [\beta_n,2\pi)$ are all equal to $\beta$.
  Take $\delta_0=\delta_0(\beta)$ and $U^{\delta_0}=\{q\in U\mid \mathrm{dist}_g(q,p)<\delta_0\}$ given by Corollary \ref{lengthkesish}.
  Without loss of generality assume $\sigma(0)\notin U^{\delta_0}$.
  Let $q$ be a point of intersection between $\sigma$ and the boundary of $U^{\delta_0}$. If $\sigma$ in $q$ has the direction of the unique critical geodesic issuing from $q$ then $\sigma$ is that critical geodesic and, then, it does not exit from $U^{\delta_0}$ anymore. So in this case the assertion is proved.  
  
 Assume then that $\sigma$ does not enter $U^{\delta_0}$ in a critical direction.  If, by contradiction, $\sigma$ enters  $U^{\delta_0}$ along infinitely many directions, then there exists $I_j$ such that $\sigma$ enters to $U^{\delta_0}$ along two different directions $\alpha_1$,~$\alpha_2\in I_j$. Then Corollary \ref{lengthkesish} implies that $\sigma$ intersects itself, contradiction.
\end{proof}

In Theorem \ref{t3} we have seen the possible classifications of the $\omega$-limit sets of maximal simple geodesics of meromorphic connections on compact Riemann surfaces. In the next proposition we study the intersection of a neighbourhood  of a Fuchsian pole with residue greater than $-\frac{1}{2}$ with  the $\omega$-limit set of a maximal simple geodesic having non-empty interior.

\begin{proposition}\label{nonemptyinteriorandnonempty}
    Let $\nabla$ be a meromorphic connection on a compact Riemann surface $S$. Set $S^0=S\setminus\Sigma$, where $\Sigma$ is the set of poles of $\nabla$. Let $p$ be a Fuchsian pole of $\nabla$ with $\rho:=\operatorname{Res}_p\nabla\ge-\frac{1}{2}$. Let $\sigma\colon[0,\varepsilon)\to S^0$ be a maximal simple geodesic for $\nabla$ and $W$ its $\omega$-limit set. Assume that $W$ has nonempty interior and $p\in  W$.  Then there exists a chart $(U,z)$  adapted to $(\nabla,p)$ with radius $r_0>0$ such that
\[
z(U\cap W)=\bigcup\limits_{j=1}^m C_j\cup\{0\},
\] 
where
\[
C_j=\{\delta_1^j\le\arg z\le\delta^j_2, |z|<r_0\} 
\]
for some $\delta_1^j$,~$\delta_2^j\in \mathbb{R}$ so that $C_j\cap C_h=\emptyset$ for $j\ne h$. Moreover, 
\begin{equation}\label{tenglikcha}
          \delta^j_2-\delta^j_1=\frac{\pi k_j}{\rho+1}\in(0,2\pi]
\end{equation}
for some $k_j\in \mathbb{N}$.
 \end{proposition}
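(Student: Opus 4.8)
The plan is to fix a chart $(U,z)$ adapted to $(\nabla,p)$, which exists by Lemma~\ref{l8} because $\rho>-1$ is automatically non-resonant, and then to reconstruct $W\cap U$ from the finitely many directions along which $\sigma$ approaches $p$. The local dictionary I would set up first is this: by Proposition~\ref{prop2} every arc of $\sigma$ meeting a small metric disc $U^{\delta_0}=\{\mathrm{dist}_g(\cdot,p)<\delta_0\}$ is a noncritical geodesic $z(\sigma(t))=e^{i\alpha}(at+b)^{1/(\rho+1)}$ with $a>0$ and $\operatorname{Im}b>0$ (Corollary~\ref{noncriticalvacritical}), and such an arc sweeps, as it descends toward $p$, the open angular range $(\alpha,\alpha+\tfrac{\pi}{\rho+1})$, while the critical geodesics are exactly the radii $\arg z=\mathrm{const}$. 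By Lemma~\ref{cheklitakes} there are only finitely many admissible directions $\alpha_1,\dots,\alpha_N$, and the decisive constraint comes from simplicity of $\sigma$ together with Corollary~\ref{lengthkesish}: two arcs of $\sigma$ with distinct directions lying in a common angular window of length $<\tfrac{\pi}{\rho+1}$ would meet, which is forbidden, so any two distinct directions are at cyclic distance $\ge\tfrac{\pi}{\rho+1}$.

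Next I would show that, near $p$, the set $W$ is the closure of the region swept by these directions. Since $W$ has non-empty interior, $\sigma$ accumulates on every point of $\operatorname{int}(W)$, hence $W\cap U$ is the closure of $\bigcup_i\{\alpha_i<\arg z<\alpha_i+\tfrac{\pi}{\rho+1}\}$ intersected with $U$; each summand is a sub-sector of angular width exactly $\tfrac{\pi}{\rho+1}$, and its radial walls are critical geodesics through $p$. Two directions that are consecutive in cyclic order and differ by exactly $\tfrac{\pi}{\rho+1}$ produce adjacent sub-sectors glued along a common radius, whereas directions differing by more leave an angular gap that contains no arc of $\sigma$ and is therefore disjoint from $W$. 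Grouping the $\alpha_i$ into maximal runs spaced by exactly $\tfrac{\pi}{\rho+1}$ then produces the disjoint closed sectors $C_j$: a run of $k_j$ directions fills from $\arg z=\delta_1^j$ to $\arg z=\delta_1^j+k_j\tfrac{\pi}{\rho+1}=\delta_2^j$, giving the identity $\delta_2^j-\delta_1^j=\tfrac{\pi k_j}{\rho+1}$. The hypothesis $\rho\ge-\tfrac12$ enters exactly here, since it is equivalent to $\tfrac{\pi}{\rho+1}\le2\pi$: it guarantees both that noncritical arcs near $p$ need not self-intersect (contrast Lemma~\ref{fuchsianyaqin}.(1)) and that each width satisfies $0<\tfrac{\pi k_j}{\rho+1}\le2\pi$. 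Shrinking the radius $r_0$ if necessary makes each $C_j$ a full sector reaching from $|z|=r_0$ down to $p$, and restoring the single point $z=0=z(p)$ yields $z(U\cap W)=\bigcup_jC_j\cup\{0\}$.

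The step I expect to be the main obstacle is the identification in the middle paragraph, namely proving both inclusions between $W\cap U$ and the closure of the swept sub-sectors. Concretely, I must verify that the angular gaps between non-adjacent directions genuinely avoid $W$ and that the radial walls separating adjacent sub-sectors, as well as the two extreme radii $\arg z=\delta_1^j,\delta_2^j$, really belong to $W$ (they should appear as critical geodesics accumulated by $\sigma$ from the filled side, consistent with the boundary graph description of Theorem~\ref{t3}). The heart of this is to show that a single noncritical arc of $\sigma$ can never enlarge its angular footprint beyond one sub-sector: it cannot cross a boundary critical geodesic of $W$ without leaving $\operatorname{int}(W)$, so its range stays within $(\alpha_i,\alpha_i+\tfrac{\pi}{\rho+1})$, and this, fed back into the spacing estimate from Corollary~\ref{lengthkesish}, is what pins the widths to exact multiples of $\tfrac{\pi}{\rho+1}$ rather than merely bounding them.
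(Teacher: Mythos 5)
Your outline reproduces the paper's skeleton (adapted chart from Lemma~\ref{l8}, finitely many entry directions via Lemma~\ref{cheklitakes}, cyclic spacing $\ge\frac{\pi}{\rho+1}$ via Corollary~\ref{lengthkesish} and simplicity, grouping of adjacent sectors into the $C_j$), but it has a genuine gap exactly at the step you flag as ``the main obstacle'', and the resolution you sketch for it would not work. The middle paragraph's claim --- ``since $W$ has non-empty interior, $\sigma$ accumulates on every point of $\operatorname{int}(W)$, hence $W\cap U$ is the closure of $\bigcup_i\{\alpha_i<\arg z<\alpha_i+\frac{\pi}{\rho+1}\}$'' --- is a non sequitur, and the underlying geometric intuition is false: an arc in direction $\alpha_i$ at height $b$ (i.e.\ $z(\sigma(t))=e^{i\alpha_i}(t+ib)^{1/(\rho+1)}$) passes through an interior angle $\theta$ of the sector at radius $\bigl(b/\sin\bigl((\rho+1)(\theta-\alpha_i)\bigr)\bigr)^{1/(\rho+1)}$, which tends to $0$ as $b\to0$. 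So the arcs approaching $p$ accumulate only on the two bounding radii (and on $p$), \emph{not} on the interior of the sector; the closure of the swept region is the sector, but closure of the arcs is not, so the inclusion $U_{\alpha_i}\subseteq W$ cannot be read off from the sweeping picture. Moreover, your proposed fix addresses the wrong inclusion: that an arc's footprint stays inside $[\alpha_i,\alpha_i+\frac{\pi}{\rho+1}]$ is automatic from the explicit parametrization in Corollary~\ref{noncriticalvacritical}; what needs proof is the lower bound, namely that $W$ fills the \emph{entire} sector of width $\frac{\pi}{\rho+1}$ around each persistent direction rather than some thinner sector, and nothing in your proposal delivers this.

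The paper closes this gap with two ingredients you omit. First, it invokes Theorem~\ref{t3}: since $W$ has nonempty interior, $\partial W$ is a graph of saddle connections without spikes, so $\partial W\cap U_0$ consists of critical geodesics ending at $p$, i.e.\ radii; this is what legitimizes writing $z(U\cap W)$ as a union of closed sectors in the first place (you try to derive the sector structure from the arcs alone, which fails by the computation above). Second, to pin each sector's width, it argues by contradiction in the half-disc: if $z(W\cap U_{\alpha_j})$ were a sector of angle $<\frac{\pi}{\rho+1}$, its preimage $S_{\alpha_j}=(\chi^{\alpha_j}_\rho)^{-1}\bigl(z(W\cap U_{\alpha_j})\bigr)$ has angle $<\pi$ in $\tilde H_\rho$ (here $\rho\ge-\frac12$ gives injectivity of $\chi^{\alpha_j}_\rho$); a sufficiently deep horizontal arc $l_{k_0}$ of $\sigma$ then both exits $S_{\alpha_j}$, forcing $\operatorname{supp}(\sigma_{k_0})\not\subset W$, and meets $W\cap U_{\alpha_j}$, so other arcs of $\sigma$ in direction $\alpha_j$ (the only direction in which $\sigma$ enters $U_{\alpha_j}$) accumulate a point of $l_{k_0}$; since horizontal segments accumulating part of a horizontal segment accumulate all of it, $\operatorname{supp}(\sigma_{k_0})\subset W$ --- a contradiction. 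This accumulation trick for parallel segments, which is the actual heart of the paper's proof, is absent from your proposal, so as written the identity $\delta^j_2-\delta^j_1=\frac{\pi k_j}{\rho+1}$ remains unproved (only the inequality $\ge$ coming from the spacing estimate is established).
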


\begin{remark}
 Assume that $\rho\in(-1,-\frac{1}{2})$. If $p\in W$, then $\sigma$ must enter the neighbourhood $U^{\delta_0}$ given by Lemma~\ref{fuchsianyaqin}. Since $\sigma$ is simple, then it must enter $U^{\delta_0}$ as a critical geodesic, that is it must go directly to $p$. In particular, $W=\{p\}$.
\end{remark}

\begin{proof} 
Let $(U_0,z)$ be a chart adapted to $(\nabla,p)$  of radius $r_0$. 
By Theorem \ref{t3}, the boundary of $W$ is composed by a graph of saddle connections without spikes. If $p\in\mathring W$, up to shrinking $U_0$ if necessary, we can assume that $U_0\Subset W$.
If $p\in\partial W$, then, for $r_0$ small enough, $\partial W\cap U_0$ must consists of critical geodesics ending at~$p$.
Notice that if $\gamma\colon(\varepsilon_-,\varepsilon_+)\to U_0\setminus\{p\}$ is a critical geodesic for $\nabla$ then $\arg z\bigl(\gamma(t)\bigr)=\mathrm{const.}$ Consequently, there exists $m\ge1$  so that
\[
z(U_0\cap W)=\bigcup\limits_{j=1}^m C_j\cup\{0\}
\] 
where  $C_j\cap C_h=\emptyset$ for $j\ne h$ and
\[
C_j=\{\delta_1^j\le\arg z\le\delta^j_2, |z|<r_0\}
\] 
for some $\delta_1^j,\delta_2^j\in \mathbb{R}$ with 
\[
0<\delta^j_2-\delta^j_1\le2\pi.
\]
We just have to prove \eqref{tenglikcha}. 

Let $A_\sigma$ be the set of noncritical geodesics in $U_0$  obtained as intersection of the support of $\sigma$ with $U_0$; notice that $\sigma$ cannot intersect $U_0$ in a critical geodesic because otherwise 
its $\omega$-limit set would reduce to $p$. Since $\sigma$ is simple, any two geodesics in $A_\sigma$ are disjoint. Given an integer $n>0$, put
\begin{equation}\label{distsupp}
   A^n_\sigma=\left\{\gamma\in A_\sigma\biggm|\mathrm{dist}_g(\mathrm{supp}(\gamma),p)<\frac{1}{n}\right\},
\end{equation}
where $g$ is a singular flat metric adapted to $\nabla$ on $U_0$. Since $p\in W$, we have $A^n_\sigma\ne\emptyset$ for all $n>0$. Set
\[
B^n:=\{\alpha=\angle(\gamma)\mid\gamma\in A^n_\sigma\}\ne\emptyset.
\]
By Lemma \ref{cheklitakes}, there exists $n_0$ such that $B^n$ is a finite set for any $n>n_0$. Notice that $B^n\supseteq B^{n+1}$.  Then there exists $n_1$ such that for any $n\ge n_1$ we have $B^n=B^{n+1}=:B_0$. Furthermore, for any two different $\alpha_1$,~$\alpha_2\in B^n$ we have
\[
\min(|\alpha_2-\alpha_1|,2\pi-|\alpha_2-\alpha_1|)\ge\frac{\pi}{\rho+1}.
\]
Otherwise,  Corollary \ref{lengthkesish} would imply that $\sigma$ is not simple, impossible.

Choose $r<\min\{r_0,\frac{1}{n_1}\}$ and $U\subset U_0$ such that $z(U)$ is the disk of radius $r$. Take any $\alpha_j\in B_0$.
 Then $z(U_{\alpha_j}):=\chi^{\alpha_j}_\rho(\tilde{H}_\rho)$ is a sector in $z(U)$ with angle $\frac{\pi}{\rho+1}$, where  $\tilde{H}_\rho:= \{w\in \mathbb{C}\mid 0\le \arg w\le \pi,\, 0<|w|<r^{\rho+1}\}$. Since $\rho\ge-\frac12$ we have that $\chi^{\alpha_j}_\rho$ is one-to-one.  Notice that if $\sigma$ enters  $U$  in the sector $U_{\alpha_j}$ it leaves $U$ again by the sector $U_{\alpha_j}$. As we have shown in the first part of the proof, the interior of $z(W\cap U_{\alpha_j})$ is a sector or it is empty. Assume, by contradiction, that $z(W\cap U_{\alpha_j})$ is a sector with angle less than $\frac{\pi}{\rho+1}$; then
\[
S_{\alpha_j}:=(\chi^{\alpha_j}_\rho)^{-1}(z(W\cap U_{\alpha_j}))\subset \tilde{H}_\rho
\]
 is a sector with angle less than $\pi$.  Since $\alpha_j\in B_0$, the set $\{\gamma\in A_\sigma^n\mid \angle(\gamma)=\alpha_j\}$ is infinite for all $n$.
Then there exists a sequence of maximal horizontal Euclidean segments $\{l_k\}$ in  $\tilde{H}_\rho$ with
\[
\mathrm{dist}(l_k,0)<\frac1k
\]
and such that there is $\sigma_k\in A^\sigma$ so that $\sigma_k(t)=\chi^{\alpha_j}_\rho(l_k(t))$, where dist is the Euclidean distance in $\tilde{H}_\rho$. Then there exists a number $k_0$ such that $l_k$ leaves the sector $S_{\alpha_j}$ for any $k\ge k_0$ and such that $\mathrm{supp}(\sigma_{k_0})\cap (W\cap U_{\alpha_j})\ne\emptyset$; hence 
\begin{equation}\label{eq:suppsigmanew}
    \mathrm{supp}(\sigma_{k_0})\not\subset W.
\end{equation}
 Since $\sigma$ enters $U_{\alpha_j}$ only in $\alpha_j$-direction, there exists a sequence of maximal horizontal Euclidean segments $\{L_h\}$ on $\tilde{H}_\rho$ such that $\chi_\rho^{\alpha_j}\circ L_h\in A_\sigma$ and that $\{L_h\}$ accumulates  $l_{k_0}\cap S_{\alpha_j}$. Since $L_h$ and  $l_{k_0}$ are horizontal Euclidean segments in  $\tilde{H}_\rho$, if $\{L_h\}$ accumulates a subset of $l_{k_0}$ then it accumulates the whole $l_{k_0}$. Hence $\mathrm{supp}(\sigma_{k_0})\subset W$, against \eqref{eq:suppsigmanew}.  
 
Thus, $z(W\cap U_{\alpha_j})$ is a sector with angle greater or equal to $\frac{\pi}{\rho+1}$. Since $z(U_{\alpha_j})$ is a sector with angle $\frac{\pi}{\rho+1}$, we have $U_{\alpha_j}\subset W$. 
So we have proved that $W\cap U = 
\bigcup_{\alpha\in B_0} U_\alpha$.   Since 
\[
\min(|\alpha_j-\alpha_i|,2\pi-|\alpha_j-\alpha_i|)\ge\frac{\pi}{\rho+1},
\] 
for any different $\alpha_j,\alpha_i\in B_0$, we can see that $U_{\alpha_i}\cap U_{\alpha_j}$ has empty interior.  Finally, the angle of the sector $z(U_{\alpha_j})$ is  $\frac{\pi}{\rho+1}$ and we are done.
\end{proof}

\begin{remark}\label{r:rphoin12N}
In Proposition \ref{nonemptyinteriorandnonempty}, if $p\in\mathring W$ then $m=1$ and $2\pi=\delta^1_2-\delta^1_1=\frac{\pi k}{\rho+1}=2\pi$ for a suitable $k\in\mathbb{N}$. Hence, we have $\rho\in \frac{1}{2}(\mathbb{N}\cup \{-1\})$.    
\end{remark}

A consequence of the previous proposition is the following corollary.

\begin{corollary}
Let $\nabla$ be a  meromorphic connection on a compact Riemann surface $S$. Set $S^o=S\setminus\Sigma$, where $\Sigma$ is the set of poles of $\nabla$. Let $p$ be a  Fuchsian critical point for $\nabla$  with a residue $\rho:=\operatorname{Res}_p\nabla\in \mathbb{R}\setminus\frac{1}{2}(\mathbb{N}\cup \{-1\})$.  Let $\sigma:[0,\varepsilon)\to S^o$ be a maximal geodesic and $W$ its $\omega$-limit set. If $p$ is an interior point of $W$ then $\sigma$ intersect itself infinitely many times.
\end{corollary}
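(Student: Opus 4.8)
The plan is to argue by contradiction and to reduce everything to the simple-geodesic results already proved, namely Theorem~\ref{rhoge-1}, Proposition~\ref{nonemptyinteriorandnonempty} and its Remark~\ref{r:rphoin12N}. So suppose $p\in\mathring W$ but, against the claim, $\sigma$ intersects itself only finitely many times. The first step is then to pass to a simple forward tail: there is some $t_0\in[0,\varepsilon)$ such that the restriction $\tilde\sigma:=\sigma|_{[t_0,\varepsilon)}$ is non-self-intersecting, and after reparametrization $\tilde\sigma$ is a forward-maximal simple geodesic. Since the $\omega$-limit set depends only on the behaviour of the curve as $t\uparrow\varepsilon$, the $\omega$-limit set of $\tilde\sigma$ is again $W$. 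Thus $W$ is realized as the $\omega$-limit set of a \emph{simple} geodesic, it has nonempty interior (because $p\in\mathring W$), and it contains $p$.

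Next I would split according to the value of $\rho$ and reach a contradiction in every range. If $\rho\le-1$, then Theorem~\ref{rhoge-1} (which does not even require simplicity, so it applies to $\sigma$ directly) forces $W=\{p\}$, contradicting that $W$ has nonempty interior. If $\rho\in(-1,-\tfrac12)$, then the Remark following Proposition~\ref{nonemptyinteriorandnonempty}, applied to the simple geodesic $\tilde\sigma$, again gives $W=\{p\}$, the same contradiction. These two ranges already cover every $\rho<-\tfrac12$, because the only value missing between them, $\rho=-\tfrac12=\tfrac12(-1)$, lies in $\tfrac12(\mathbb N\cup\{-1\})$ and is therefore excluded by hypothesis.

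In the remaining range $\rho>-\tfrac12$ the idea is to invoke Proposition~\ref{nonemptyinteriorandnonempty} for $\tilde\sigma$: all its hypotheses hold, since $\rho\ge-\tfrac12$, $W$ has nonempty interior, $p\in W$, and $\tilde\sigma$ is simple. The proposition then exhibits $z(U\cap W)$ as a union of sectors, and because $p$ is an \emph{interior} point of $W$, Remark~\ref{r:rphoin12N} applies and yields $m=1$ together with $2\pi=\tfrac{\pi k}{\rho+1}$ for some $k\in\mathbb N$, that is $\rho\in\tfrac12(\mathbb N\cup\{-1\})$. This contradicts the standing assumption $\rho\in\mathbb R\setminus\tfrac12(\mathbb N\cup\{-1\})$. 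Having reached a contradiction in all three cases, the assumption of finitely many self-intersections is untenable, so $\sigma$ must intersect itself infinitely many times.

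The step I expect to need the most care is the reduction to the simple tail $\tilde\sigma$: one has to check that discarding the initial segment preserves both the $\omega$-limit set $W$ and the property of being a forward-maximal geodesic ray, so that Proposition~\ref{nonemptyinteriorandnonempty} and its Remark—stated for simple geodesics defined on $[0,\varepsilon)$—genuinely apply to $\tilde\sigma$. Once this reduction is secured, the remainder is a short case analysis on $\rho$ feeding into Remark~\ref{r:rphoin12N}.
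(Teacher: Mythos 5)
Your proposal is correct and follows essentially the same route as the paper: dispose of $\rho\le-1$ via Theorem~\ref{rhoge-1}, handle $\rho\in(-1,-\frac12)$ via Lemma~\ref{fuchsianyaqin} (the paper applies the lemma directly to get infinitely many self-intersections, while you invoke the remark derived from it inside your global contradiction, which is the same content), and for $\rho\ge-\frac12$ pass to a simple tail and apply Remark~\ref{r:rphoin12N}. The reduction step you flag as delicate is exactly the paper's ``up to changing the starting point we can assume that $\sigma$ does not intersect itself,'' and your justification of it is sound.
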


\begin{proof}
Suppose first $\rho\le-1$. Then, by Theorem \ref{rhoge-1}, if $p\in W$ then $W=\{p\}$, contradiction. Hence $\rho>-1$.

Let $(U,z)$ be a chart adapted to $(\nabla,p)$ with $U\subset W$. Let $g$ be a singular flat metric adapted to $\nabla$ on~$U$. Let first assume $-1<\rho<-\frac{1}{2}$. Then, by Lemma \ref{fuchsianyaqin}, there exists a positive number $\delta$ such that
   any noncritical geodesic entering $U^\delta:=\{q\in U:\mathrm{dist}_g(p,q)<\delta\}$ intersects itself. Since $p\in W$, it is easy to see that $\sigma$ must enter $U^\delta$ infinitely many times.  Hence $\sigma$ intersects itself  in a  neighbourhood of $p$ infinitely many times.

Let assume now $\rho\ge -\frac12$. Assume, by contradiction, that $\sigma$ intersect itself finitely many times. Then up to changing the starting point we can assume that $\sigma$ does not intersect itself. Since $p\in\mathring W$, Remark \ref{r:rphoin12N} implies that
\[ \rho\in \frac{1}{2}(\mathbb{N}\cup \{-1\}),\] 
and this  contradicts the assumption $\rho\in \mathbb{R}\setminus\frac{1}{2}(\mathbb{N}\cup \{-1\})$. Hence, $\sigma$ must intersect itself infinitely many times.
\end{proof}

 Another possible $\omega$-limit set of a maximal simple geodesic for a meromorphic connection $\nabla$ on a compact Riemann surface $S$ is a boundary graph of saddle connections. In the next proposition we study the intersection of such a boundary graph of saddle connections with a neighbourhood  of a Fuchsian pole with real residue greater than $-1$.
 
\begin{proposition}\label{orasi2pitaqsim}
   Let $\nabla$ be a  meromorphic connection on a compact Riemann surface $S$. Set $S^o=S\setminus\Sigma$, where $\Sigma$ is the set of poles of $\nabla$. Let $p$ be a Fuchsian pole for $\nabla$ with $\rho:=\operatorname{Res}_p\nabla>-1$. Let $\sigma\colon [0,\varepsilon)\to S^0$ be a maximal simple geodesic for $\nabla$ and $W$ its $\omega$-limit set.  Assume $W$ is a boundary graph of saddle connections and $p\in \partial W$. Then there exists a chart  $(U,z)$ adapted to $(\nabla,p)$ such that
\[
z(U\cap W)=\bigcup\limits_{j=1}^m L_j\cup\{0\}
\]
where
\[
L_j=\{w\in z(U)\mid \arg w=\delta_j\}
\] 
for some $\delta_j\in \mathbb{R}$. Moreover, for any $L_j$ there exists $L_k$ such that
\[
\delta_j-\delta_k=\frac{\pi }{\rho+1} \mod 2\pi.
\]
\end{proposition}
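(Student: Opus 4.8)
The plan is to work in a chart $(U,z)$ adapted to $(\nabla,p)$, supplied by Lemma~\ref{l8}, chosen small enough that $p$ is the only pole in $U$ and that $W\cap U$ consists exactly of the arcs of the graph $W$ incident to $p$. Each such arc is a saddle connection, i.e. a maximal geodesic tending to $p$, so by Corollary~\ref{noncriticalvacritical} it is a \emph{critical} geodesic; and the parametrization $z(\sigma(t))=e^{i\alpha}(at+b)^{1/(\rho+1)}$ with $b\in\mathbb{R}$ forces $\arg z$ to be constant along it. Hence each incident arc is a radius $L_j=\{\arg w=\delta_j\}$, and since the boundary graph $W$ is finite there are only finitely many of them, which already gives the first assertion $z(U\cap W)=\bigcup_{j=1}^m L_j\cup\{0\}$. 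Before addressing the angle relation I would dispose of the range $\rho\in(-1,-\tfrac12)$: there Lemma~\ref{fuchsianyaqin}(1) shows that every noncritical geodesic sufficiently close to $p$ self-intersects, so a simple $\sigma$ accumulating at $p$ could only reach $p$ along a critical geodesic, forcing $W=\{p\}$ and contradicting that $W$ is a boundary graph. Thus I may assume $\rho\ge-\tfrac12$, where $\chi^\alpha_\rho$ is injective and its image sectors embed.

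The heart of the matter is the degeneration of noncritical geodesics into \emph{pairs} of radii. By Proposition~\ref{prop2}, any piece of $\sigma$ inside $U$ has the form $z=\chi^\alpha_\rho(at+b)=e^{i\alpha}(at+b)^{1/(\rho+1)}$ with $a>0$; since $W$ is a boundary graph rather than the single point $p$, these pieces are all noncritical, so $\operatorname{Im}b>0$. As $\operatorname{Im}b\downarrow 0$ the horizontal segment $t\mapsto at+b$ collapses onto the real axis: its part with $t>0$ tends to the positive real axis, where $\arg z=\alpha$, and its part with $t<0$ tends to the negative real axis, where $\arg z=\alpha+\tfrac{\pi}{\rho+1}$. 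Hence such a family of noncritical geodesics, as it is pushed towards $p$, accumulates precisely on the two radii at angles $\alpha$ and $\alpha+\tfrac{\pi}{\rho+1}$. By Lemma~\ref{cheklitakes} the geodesic $\sigma$ enters $U$ along only finitely many directions $\alpha$, each contributing one such swept sector of opening $\tfrac{\pi}{\rho+1}$.

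To extract the angle relation, recall from Theorem~\ref{t3} that $\mathrm{supp}(\sigma)$ lies in a single component $P$ of $S\setminus W$, so $\sigma\subset\mathring P$ while every $L_j\subset\partial P$. Fix $L_j$. Since $p\in\partial W$ and $L_j$ is an arc of $W$ incident to $p$, the geodesic $\sigma$ accumulates on $L_j$, necessarily from the $P$-side. Because $\sigma$ is simple it cannot cross the boundary arc $L_j$, so no swept sector of an entering piece can contain $\delta_j$ in its interior; as $\sigma$ nevertheless accumulates on $L_j$, the angle $\delta_j$ must be a \emph{boundary} edge of the limiting sector of some entering direction. By the degeneration above, $\sigma$ then also accumulates on the opposite edge of that same sector, which lies at angular distance $\tfrac{\pi}{\rho+1}$ from $\delta_j$; being an accumulation point of $\sigma$ it belongs to $W$, and since $z(U\cap W)$ is a union of radii it coincides with some $L_k$. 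This yields $\delta_j-\delta_k\equiv\tfrac{\pi}{\rho+1}\pmod{2\pi}$, as required.

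The main obstacle I anticipate is the third step, precisely the passage from ``$\sigma$ accumulates on $L_j$'' to ``$\delta_j$ is a boundary edge of a swept $P$-sector whose opposite edge is again a boundary radius.'' This requires combining the finiteness of entering directions (Lemma~\ref{cheklitakes}) with the simplicity of $\sigma$ to rule out crossings of $L_j$, together with a careful analysis—in the spirit of the proof of Proposition~\ref{nonemptyinteriorandnonempty}—of how the noncritical pieces with $\operatorname{Im}b\downarrow0$ sweep radii arbitrarily close to, and in the limit onto, both edges of their sector. The same analysis is what fixes, for each $L_j$, the adjacent $P$-sector on the correct side and thereby pins down the sign of $\tfrac{\pi}{\rho+1}$ in the statement.
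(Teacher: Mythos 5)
Your proposal is correct and takes essentially the same route as the paper: work in the adapted chart from Lemma~\ref{l8}, identify $z(U\cap W)$ with finitely many critical radii, and let the noncritical pieces of $\sigma$ degenerate as $\operatorname{Im}b\downarrow 0$, so that they accumulate onto a pair of radii at angular distance $\frac{\pi}{\rho+1}$, the second of which lies in $W$ and is therefore some $L_k$. The only minor differences are that the paper simply extracts a convergent subsequence of the entering directions $\alpha_{n}$ (instead of invoking Lemma~\ref{cheklitakes} and the non-crossing of $W$ to locate $\delta_j$ as a sector edge) and that it never needs your preliminary exclusion of $\rho\in\left(-1,-\frac{1}{2}\right)$.
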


\begin{proof}
Let $(U,z)$ be a chart  adapted to $(\nabla,p)$ and $r$ its radius. Note that if $\gamma\colon(\varepsilon_-,\varepsilon_+)\to U\setminus\{p\}$ is a critical geodesic for $\nabla$ then $\arg z(\gamma(t))=\mathrm{const.}$ Consequently, we can choose $r$ small enough so that there exists $m\ge1$  such that
\[
z(U\cap W)=\bigcup\limits_{j=1}^m L_j\cup\{0\}
\] 
where $L_j$ is as in the statement. 
Since $\sigma$ accumulates $L_j$,  there exists a sequence of noncritical geodesics   $\gamma_n\colon(\varepsilon^n_-,\varepsilon^n_+)\to U\setminus\{p\}$ accumulating $L_j$ such that  $\mathrm{supp}(\gamma_n)\subset\mathrm{supp}(\sigma)$ and
\begin{equation}\label{distsupp2}
\mathrm{dist}_g(\mathrm{supp}(\gamma_n),p)<\frac{1}{n}.
\end{equation}
Then there exist $a_n>0$ and $b_n\in \mathbb{C}$ with $\operatorname{Im}\,b_n>0$ such that
\[
z(\gamma_n(t))=e^{i\alpha_n}(a_nt+b_n)^{\frac{1}{\rho+1}}.
\]
Up to reparametrization we can assume $a_n=1$ and $\operatorname{Re}\,b_n=0$. Furthermore, \eqref{distsupp2} implies that $b_n\to 0$. Then $\varepsilon^n_-=-\varepsilon^n_+$.   
Let $\{n_k\}_{k=1}^\infty$ be a sequence such that the sequence $\{\alpha_{n_k}\}$ converges to $\alpha_0\in[0,2\pi)$.  Then $\gamma_{n_k}$ accumulates geodesics $\gamma^{\pm}\colon(0, \varepsilon_+)\to U\setminus\{p\}$ such that
\[
z(\gamma^{\pm}(t))=e^{i\alpha_0}(\pm t)^{\frac{1}{\rho+1}}.
\]
Then setting $\delta^{\pm}=\arg(z(\gamma^{\pm}(t)))$ we have $\delta^+-\delta^-=\frac{\pi}{\rho+1}\mod 2\pi$.
Note that $\mathrm{supp}(\gamma^{\pm})\subset W\cap U$. Moreover,
by construction, $\mathrm{supp}(\gamma^{+})\cup \mathrm{supp}(\gamma^{-})=L_j\cup L_k$
for some $L_k$ and thus we are done.
\end{proof}

\begin{corollary}\label{boundaryburchak}
Let $\nabla$ be a Fuchsian meromorphic connection  with real periods on a compact Riemann surface $S$. Set $S^o=S\setminus\Sigma$, where $\Sigma$ is the set of poles of $\nabla$. Let $\sigma:[0,\varepsilon)\to S^o$ be a maximal simple geodesic for $\nabla$. Assume the $\omega$-limit set of $\sigma$ is a boundary graph of  saddle connections $\Gamma=\partial V$ for some connected open set $V\subset S$. Then the vertices of $\Gamma$ are Fuchsian poles with residues greater than or equal to~$-\frac{1}{2}$; moreover, the interior (with respect to $V$) angle on a vertex $p_j$ of $\Gamma$ is $\frac{\pi}{\operatorname{Res}_{p_j}\nabla+1}$.
 \end{corollary}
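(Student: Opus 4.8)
The plan is to reduce Corollary~\ref{boundaryburchak} to the two preceding propositions by applying them at each vertex of the boundary graph $\Gamma=\partial V$. First I would note that by Theorem~\ref{rhoge-1}, any pole $p$ with $\operatorname{Re}\operatorname{Res}_p\nabla\le-1$ in the $\omega$-limit set $W$ of $\sigma$ forces $W=\{p\}$, which contradicts $W=\Gamma$ being a nontrivial boundary graph of saddle connections. Hence every vertex $p_j$ of $\Gamma$ is a Fuchsian pole with residue $\rho_j:=\operatorname{Res}_{p_j}\nabla>-1$ (recall the residues are real by Remark~\ref{perres}, since $\nabla$ has real periods). So all vertices are automatically in the regime where Proposition~\ref{orasi2pitaqsim} applies.

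Next I would fix a vertex $p_j$ and invoke Proposition~\ref{orasi2pitaqsim} with $p=p_j$. It produces a chart $(U,z)$ adapted to $(\nabla,p_j)$ in which $z(U\cap W)$ consists of finitely many radial critical geodesics $L_1,\dots,L_m$ emanating from $0$, with the matching condition that for each $L_j$ there is some $L_k$ with $\delta_j-\delta_k=\frac{\pi}{\rho_j+1}\pmod{2\pi}$. The local picture near $p_j$ is therefore: the images of the two saddle connections of $\Gamma$ meeting at $p_j$ are two radii making an angle of $\frac{\pi}{\rho_j+1}$ in the $z$-coordinate. The key computation is that under the adapted chart the metric is $g^{1/2}=|z|^\rho|dz|$ (see \eqref{gniko`rinish}), and the cone angle of this singular flat metric at $p_j$ is $2\pi(\rho_j+1)$; equivalently, the local isometry $J$ with $J'=z^\rho$ (namely $J=\frac{z^{\rho+1}}{\rho+1}$) sends a sector of $z$-angle $\theta$ to a Euclidean sector of angle $(\rho_j+1)\theta$. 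Thus a $z$-sector of angle $\frac{\pi}{\rho_j+1}$ is mapped by $J$ to a Euclidean half-plane, i.e.\ to a straight angle of $\pi$; but the interior angle that matters is the one measured inside $V$, which corresponds to the complementary sector. I would then identify which of the two complementary sectors at $p_j$ is the interior of $V$ and compute its image angle under $J$.

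The main obstacle will be bookkeeping the interior-versus-exterior sector correctly and confirming that the interior angle (measured in the flat metric $g$, consistently with the notion used for a geodesic polygon vertex) equals exactly $\frac{\pi}{\rho_j+1}$ rather than its complement. Concretely, near $p_j$ the set $z(U\cap W)$ is the union of radii, and $V$ lies on one side of $\Gamma$; the two saddle connections bounding $V$ at $p_j$ cut out a $z$-sector whose $z$-angle is $\frac{\pi}{\rho_j+1}$ by Proposition~\ref{orasi2pitaqsim}. I would argue that this is precisely the sector lying inside $V$ (since $\sigma$, whose support lies in $V$ by the final clause of Theorem~\ref{t3}, accumulates $\Gamma$ from the $V$-side, and the noncritical geodesics $\gamma_n$ producing $L_j,L_k$ in the proof of Proposition~\ref{orasi2pitaqsim} live in $U\cap V$). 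The interior angle of $\Gamma$ at $p_j$, as a Euclidean angle in the local isometry coordinate, is then $(\rho_j+1)\cdot\frac{\pi}{\rho_j+1}=\pi$; but the angle intrinsic to the cone at $p_j$—the quantity genuinely called the interior angle of the geodesic polygon—is the $z$-sector angle itself, namely $\frac{\pi}{\rho_j+1}$, which is the asserted value. Finally, since any interior angle of $V$ is positive and the two saddle connections are distinct arcs of $\Gamma$ meeting transversally at a pole, we need $0<\frac{\pi}{\rho_j+1}$, giving $\rho_j>-1$; to upgrade this to $\rho_j\ge-\frac12$ I would use that the local isometry $\chi^{\alpha}_\rho$ must be injective on the relevant sector (as invoked in Proposition~\ref{nonemptyinteriorandnonempty}, where injectivity of $\chi^{\alpha_j}_\rho$ requires $\rho\ge-\frac12$): if $-1<\rho_j<-\frac12$, Lemma~\ref{fuchsianyaqin}(1) forces any noncritical geodesic entering a small neighborhood of $p_j$ to self-intersect, so a simple geodesic $\sigma$ could approach $p_j$ only critically, making $W=\{p_j\}$ rather than a boundary graph—contradiction. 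Hence $\rho_j\ge-\frac12$, completing the proof.
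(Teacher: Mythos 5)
Your proposal is correct and follows essentially the same route as the paper's proof: residues $>-1$ from Theorem~\ref{rhoge-1}, the upgrade to $\ge-\frac12$ from simplicity of $\sigma$ via Lemma~\ref{fuchsianyaqin}, and the angle $\frac{\pi}{\operatorname{Res}_{p_j}\nabla+1}$ by the accumulation-of-noncritical-arcs argument of Proposition~\ref{orasi2pitaqsim} combined with the local sector description near $p_j$ (the paper phrases this via the first part of Proposition~\ref{nonemptyinteriorandnonempty} applied to $V$, but the content is the same). Your extra digression on the cone-angle versus Euclidean angle under the local isometry $J$ is harmless and consistent with the paper's convention, in which the internal angle at a pole is the argument difference in the adapted chart, as in Lemma~\ref{equalityadaptedda}.
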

  
\begin{proof}
  By Theorem \ref{rhoge-1} we know that $\operatorname{Res}_{p_j}\nabla>-1$. Since $\sigma$ does not intersect itself, by Lemma \ref{fuchsianyaqin} we have $\operatorname{Res}_{p_j}\nabla\ge-\frac{1}{2}$. Let $(U_j,z_j)$ be a chart adapted to $(\nabla,p_j)$ with radius $r$. Since $U_j\cap \Gamma$ is composed only by critical geodesics, when $r$ is small enough the first statement of Proposition \ref{nonemptyinteriorandnonempty}  implies that
\[
z_j(U\cap V)=\bigcup_{i=1}^{m_j} C_i\cup\{0\},
\]
where 
\[
C_i=\{w\in z(U)\mid\delta^1_i<\arg w<\delta^2_i\}
\] 
for some $\delta^1_i$,~$\delta^2_i\in\mathbb{R}.$  Note that $\sigma$ enters each $z^{-1}(C_i)$ infinitely many times. Arguing as in the proof of Proposition \ref{orasi2pitaqsim} we then obtain that 
\[
\delta^2_i-\delta^1_i=\frac{\pi}{\operatorname{Res}_{p_j}\nabla+1} \mod 2\pi,
\]
as claimed.
\end{proof}

\section{Generalized Teichm{\"u}ller formula}\label{ATTsection}

Let us recall a formula that was first proved for $\mathbb{P}^1(\mathbb{C})$ in  \cite{AT1} and later for any compact Riemann surface in \cite{AB}.

\begin{theorem}[{\cite[Theorem 4.1]{AT1}, \cite[Theorem 3.1]{AB}}]\label{fabrizioequality}
 Let $\nabla$ be a meromorphic connection on a compact Riemann surface $S$, with poles $\{p_1,...,p_r\}$, and set $S^o=S\setminus\{p_1,...,p_r\}$. Let $P$ be a part of $S$ whose boundary  multicurve $\gamma\subset S^o$ is regular and it has $m_f\ge1$ free components, positively oriented with respect to $P$. Let $z_1,...,z_s$ denote the vertices of the free components of $\gamma$ and $\varepsilon_j\in(-\pi,\pi)$ the external angle at $z_j$. Suppose that $P$ contains the poles $\{p_1,...,p_g\}$ and denote by $g_{\hat{P}}$ the genus of the filling $\widetilde{P}$ of $P$. Then
\begin{equation}\label{geodesicequality2}
  \sum_{j=1}^{s}\varepsilon_j=2\pi\left(2-m_f-2g_{\widetilde{P}}+\sum_{j=1}^{g}\operatorname{Re}\,\mathrm{ Res}_{p_j}(\nabla)\right).
  \end{equation}
\end{theorem}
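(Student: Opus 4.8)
The plan is to prove Theorem~\ref{fabrizioequality} by computing the total turning of the boundary multicurve $\gamma$ using the holomorphic structure of the connection, via a Gauss--Bonnet-type argument applied to the flat metric (or equivalently the local isometries) induced by~$\nabla$ on the regular part $S^o$. Since the connection $\nabla$ restricted to $S^o$ is holomorphic, by Proposition~\ref{p:localis} each $\nabla$-chart carries local isometries $J_\alpha$ that send geodesics to Euclidean segments, so that $\gamma$ is, away from the vertices, a piecewise-geodesic curve with respect to a flat metric, and its geodesic curvature vanishes along each arc. The total turning of $\gamma$ is therefore concentrated in two places: the external angles $\varepsilon_j$ at the vertices $z_j$, and the holonomy (monodromy) of the connection accumulated as one traverses the loops of~$\gamma$.

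First I would set up the computation on the filling $\widetilde{P}$. By definition $\widetilde P$ is obtained by gluing a disk along each free component of $\gamma$, so $\widetilde P$ is a compact surface of genus $g_{\widetilde P}$ with $\partial\widetilde P$ consisting of the surrounded components (and the free components now bound disks inside $\widetilde P$). Applying a combinatorial Gauss--Bonnet/Poincaré--Hopf count on $\widetilde P$ relates the Euler characteristic $\chi(\widetilde P) = 2 - 2g_{\widetilde P} - m_f$ (the $m_f$ capping disks raise $\chi$, but they are counted as the free boundary components being filled) to the total curvature. The curvature of the flat metric is concentrated at the poles $p_j$ inside $P$, each contributing $2\pi\,\operatorname{Re}\operatorname{Res}_{p_j}\nabla$; indeed by Remark~\ref{perres} and the relation $\rho(\gamma)=\exp(\int_\gamma\eta)$, integrating $\eta$ around a pole measures exactly the angular defect/monodromy of the local isometry, whose real part of the residue governs the rotational part of the holonomy and hence the cone angle contribution.

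The central computation is to track the argument of the derivative $J_\alpha'=\exp(K_\alpha)$ of the local isometry as we go once around each free loop of $\gamma$. Along each geodesic arc the image under $J_\alpha$ is a straight segment, so the direction only jumps by the external angle $\varepsilon_j$ at each vertex. Closing up a free loop, the total change in $\arg J_\alpha'$ must equal $2\pi$ times the winding of the image curve, which for a simple positively oriented loop enclosing poles $p_1,\dots,p_g$ equals $2\pi$ corrected by the monodromy $\sum_j 2\pi\operatorname{Re}\operatorname{Res}_{p_j}\nabla$ and by the topological defect $-2\pi(2g_{\widetilde P}+m_f-2)$ coming from how the $m_f$ loops fit together on the genus-$g_{\widetilde P}$ filling. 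Equating the two expressions for the total rotation yields
\[
\sum_{j=1}^{s}\varepsilon_j=2\pi\left(2-m_f-2g_{\widetilde{P}}+\sum_{j=1}^{g}\operatorname{Re}\operatorname{Res}_{p_j}\nabla\right),
\]
which is exactly \eqref{geodesicequality2}.

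The main obstacle I anticipate is bookkeeping the monodromy coherently across chart transitions. Because $\nabla$ is only holomorphic (not necessarily with real periods in this theorem) the cocycle $\hat\xi_{\alpha\beta}$ of \eqref{mer2} lives in $\mathbb{C}^*$, so $\arg J_\alpha'$ is genuinely multivalued and one must extract only its \emph{real part of the residue}, i.e. the argument change, while discarding the radial scaling $|\rho(\gamma)|$; this is why only $\operatorname{Re}\operatorname{Res}_{p_j}\nabla$ appears. Carefully patching the local isometries around each free loop and each surrounded component, and verifying that the topological Gauss--Bonnet defect assembles into the stated combination $2-m_f-2g_{\widetilde P}$, is the delicate part. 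The requirement $m_f\ge 1$ is what allows the argument to close up on the filling without an obstruction from a purely surrounded boundary.
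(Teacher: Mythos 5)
You should first be aware that this paper does not prove Theorem~\ref{fabrizioequality}: it is recalled verbatim from \cite[Theorem 4.1]{AT1} and \cite[Theorem 3.1]{AB}, so the only meaningful comparison is with those proofs (and with Lemma~\ref{equalityadaptedda} and Theorem~\ref{t:attf} of this paper, which run the same kind of computation). Your overall skeleton --- boundary turning concentrated in the external angles, residues entering through the integral of the connection form, topology entering through the filling $\widetilde P$ --- is indeed the strategy of the cited proofs. But as written, the proposal has two genuine gaps. The first is foundational: you ground the argument in ``the flat metric induced by $\nabla$ on $S^o$''. No such metric exists in the generality of this theorem. By Theorem~\ref{t8}, an adapted singular flat metric exists only when $\nabla$ is Fuchsian with real periods, whereas Theorem~\ref{fabrizioequality} concerns an arbitrary meromorphic connection, with possibly non-real residues, higher-order poles, and non-unitary monodromy; the appearance of $\operatorname{Re}\operatorname{Res}_{p_j}\nabla$ in \eqref{geodesicequality2} is a symptom of exactly this (for instance, such connections admit closed non-periodic geodesics, which a metric forbids, cf.\ Corollary~\ref{l:closedperiodic}). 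The working substitute, which you gesture at but never pin down, is the local identity obtained by integrating the geodesic equation \eqref{geodesicequation}: along a geodesic one has $d\log\bigl((z_\alpha\circ\sigma)'\bigr)=-\eta_\alpha$, hence the turning of the tangent satisfies $d\bigl(\arg dz_\alpha(\sigma')\bigr)=-\operatorname{Im}\eta_\alpha$. It is this identity --- not vanishing geodesic curvature of a global metric --- that converts the boundary turning into $\operatorname{Im}\oint\eta$ and then, via $\oint\eta=2\pi i\operatorname{Res}$, into $2\pi\sum_j\operatorname{Re}\operatorname{Res}_{p_j}\nabla$. (Relatedly, your heuristic that each pole contributes curvature $2\pi\operatorname{Re}\operatorname{Res}_{p_j}\nabla$ has the wrong sign even in the metric case: a Fuchsian pole of real residue $\rho$ gives a cone of angle $2\pi(\rho+1)$, i.e.\ an angle defect $-2\pi\rho$.)

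The second gap is precisely the step you defer: ``verifying that the topological Gauss--Bonnet defect assembles into $2-m_f-2g_{\widetilde P}$''. This is the actual content of the generalization from $\mathbb{P}^1(\mathbb{C})$ to arbitrary genus. On a surface of positive genus the phrase ``a simple loop enclosing the poles $p_1,\dots,p_g$'' has no meaning, and $\eta$ is not a global $1$-form: by \eqref{merrepresentation} it changes by $d\xi_{\alpha\beta}/\xi_{\alpha\beta}$ across charts, so the residue theorem cannot be invoked directly on $P$. In \cite{AB} this is resolved by summing the chart-transition corrections, which amounts to computing the degree of the tangent bundle of the filling $\widetilde P$ --- equivalently, a Poincar\'e--Hopf count for a line field extended over $\widetilde P$ with one index-one singularity inside each of the $m_f$ capping disks, yielding $\chi(\widetilde P)-m_f=2-2g_{\widetilde P}-m_f$. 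Without this cocycle or index bookkeeping, your sentence ``the total change in $\arg J_\alpha'$ must equal $2\pi$ times the winding \dots corrected by the monodromy \dots and by the topological defect'' is a restatement of the conclusion rather than a derivation of it. So: right family of argument, correctly identified ingredients (local isometries, argument tracking, real parts of residues, filling), but the two load-bearing computations --- the differential identity along geodesics and the assembly of the Euler-characteristic term from the transition cocycle --- are missing, and the global-flat-metric framing would have to be abandoned before the proof could be completed.
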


In this section we prove some generalizations of this formula. Let us first study the existence of broken geodesics connecting two regular points in a neighbourhood of a pole with real residue greater than $-1$.

\begin{lemma}
 Let $\nabla$ be a meromorphic connection on a Riemann surface $S$. Let $p$ be a Fuchsian pole of~$\nabla$ with $\rho:=\operatorname{Res}_p\nabla>-1$. Let $(U,z)$ be a chart adapted to $(\nabla,p)$. For $\alpha\in[0,2\pi)$, let $U_\alpha\subset U$ be  such that
\[
V_\alpha:=z(U_\alpha):=\left\{w\in z(U)\biggm| \alpha<\arg w<\alpha+\frac{\pi}{\rho+1}\right\}.
\] 
Then any two points in $U_\alpha$ can be connected by a simple geodesic not leaving $U_\alpha$.
\end{lemma}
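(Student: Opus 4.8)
The plan is to straighten the sector $U_\alpha$ by a local isometry, which turns the statement into the trivial remark that a half-disc is convex. First I would observe that $V_\alpha$ is simply connected and does not contain the pole (that is $0\notin V_\alpha$), so on $U_\alpha$ there is a single-valued holomorphic primitive $K=\rho\log z$ of $\eta=\frac{\rho}{z}\,dz$ and hence, since $\rho>-1$ forces $\rho\neq -1$, a local isometry in the sense of Definition~\ref{d:isometry}, namely the holomorphic primitive
\[
J(z)=\frac{z^{\rho+1}}{\rho+1}
\]
of $\exp(K)=z^{\rho}$. By Proposition~\ref{p:localis}(3) this $J$ carries geodesics of $\nabla$ on $U_\alpha$ to Euclidean segments; moreover a direct computation with the transformation rule \eqref{merrepresentation} gives $\eta\equiv 0$ in the coordinate $\zeta=J(z)$, so $(U_\alpha,\zeta)$ is a $\nabla$-chart (Definition~\ref{nablaatlasdef}). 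Consequently, by the geodesic equation \eqref{geodesicequation} the geodesics in this chart are exactly the affinely parametrized Euclidean segments, so \emph{every} segment in $\zeta(U_\alpha)$ is the image of a geodesic.

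Next I would identify the image. Since $J$ multiplies arguments by the factor $\rho+1>0$ and takes moduli to their $(\rho+1)$-th powers divided by $\rho+1$, the sector $V_\alpha$ of opening $\frac{\pi}{\rho+1}$ is sent onto the half-disc
\[
D^{+}=\Bigl\{\zeta\in\mathbb{C}\ \Bigm|\ (\rho+1)\alpha<\arg\zeta<(\rho+1)\alpha+\pi,\ 0<|\zeta|<\tfrac{r^{\rho+1}}{\rho+1}\Bigr\},
\]
whose opening is \emph{exactly} $\pi$, independently of $\rho$. The same power count gives injectivity: if $J(z_1)=J(z_2)$ then $z_1^{\rho+1}=z_2^{\rho+1}$, so $|z_1|=|z_2|$ and the arguments of $z_1,z_2$ differ by a nonzero multiple of $\frac{2\pi}{\rho+1}$, which exceeds the angular width $\frac{\pi}{\rho+1}$ of the sector; hence $z_1=z_2$ and $J\colon U_\alpha\to D^{+}$ is a biholomorphism.

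Then the geometry is immediate. The set $D^{+}$ is the intersection of the open half-plane $\{(\rho+1)\alpha<\arg\zeta<(\rho+1)\alpha+\pi\}$ with an open disc, hence it is convex, and it avoids the origin, which lies on the bounding line of the half-plane. Given $q_1,q_2\in U_\alpha$, the straight Euclidean segment joining $J(q_1)$ to $J(q_2)$ therefore stays in $D^{+}$, is simple, and never meets $0$. Pulling it back by the biholomorphism $J^{-1}$ produces a simple curve in $U_\alpha$ from $q_1$ to $q_2$ which, being the preimage of a Euclidean segment in a $\nabla$-chart, is a geodesic for $\nabla$ that does not leave $U_\alpha$, as required.

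The one genuinely delicate point — and the step I expect to be the main obstacle — is the well-definedness of this picture when $-1<\rho<-\frac12$, for then $\frac{\pi}{\rho+1}>2\pi$ and the ``sector'' $V_\alpha$ winds around the puncture more than once, so it is no longer an embedded subset of the disc $z(U)\setminus\{0\}$. In that range $U_\alpha$ must be understood on the branched cover, equivalently as a genuine sector in the universal cover of the punctured disc. The point to check is that nothing in the construction uses embeddedness: the single-valuedness of $J$, the identification of $\zeta(U_\alpha)$ with the half-disc of opening exactly $\pi$, and the injectivity argument all go through verbatim on the cover, and the convexity of $D^{+}$ — which carries the whole proof — is untouched.
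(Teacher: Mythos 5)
Your proposal is correct and is essentially the paper's own proof: the paper also straightens the sector via the local isometry $J(z)=(\chi_\rho^\alpha)^{-1}(z)$, i.e.\ a constant multiple of $z^{\rho+1}$, observes that $J(V_\alpha)$ is an upper half-disc, and concludes by convexity. Your extra verifications (that the target chart is a $\nabla$-chart, the injectivity count, and the caveat that for $-1<\rho<-\frac{1}{2}$ the opening $\frac{\pi}{\rho+1}$ exceeds $2\pi$ so $V_\alpha$ must be read on the universal cover of the punctured disc, with simplicity understood there) are sound diligence beyond the paper's three-line argument, which passes over these points in silence.
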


\begin{proof}
For $z\in V_\alpha$ put  $J(z)=(\chi_\rho^\alpha)^{-1}(z)=e^{-(\rho+1)\alpha}z^{\rho+1}$. It is not difficult to see that $J$ is a local isometry of $\nabla$ on $U_\alpha$ and that $J(V_\alpha)$ is an upper half disc. Since it is convex, any two points in $J(V_\alpha)$ can be joined a Euclidean segment not leaving $J(V_\alpha)$ and we are done.
\end{proof}

\begin{corollary}\label{graphichidanchiq}
Let $\nabla$ be a meromorphic connection on a Riemann surface $S$. Let $p$ be a Fuchsian pole of $\nabla$ with $\operatorname{Res}_p\nabla>-1$. Let  $(U,z)$ be a chart adapted to $(\nabla,p)$. Let $U_1\subset U$ be such that 
\[
z(U_1):=\{w\in z(U)\mid\alpha_1<\arg w<\alpha_2\}
\] 
for some $\alpha_1$, $\alpha_2\in\mathbb{R}$. Then any two points in $U_1$ can be joined by a simple broken geodesic not leaving $U_1$.
\end{corollary}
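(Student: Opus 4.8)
The plan is to reduce the general sector to the width-$\frac{\pi}{\rho+1}$ situation already settled by the previous lemma, by subdividing the angular interval spanned by the two given points and concatenating geodesics across the pieces. Write $\theta=\frac{\pi}{\rho+1}$ and let $r$ be the radius of $(U,z)$. Given $P,Q\in U_1$, set $w_P=z(P)$ and $w_Q=z(Q)$; after possibly relabelling we may assume $\arg w_P\le\arg w_Q$. If $\arg w_P=\arg w_Q$, the two points lie on a common radius of $z(U)$ avoiding the origin, which is the support of a critical geodesic, and we are done; so assume $\arg w_P<\arg w_Q$.

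First I would subdivide the angular range. Choose $\arg w_P=\gamma_0<\gamma_1<\dots<\gamma_N=\arg w_Q$ with every gap $\gamma_k-\gamma_{k-1}<\theta$; since $P,Q\in U_1$ we have $\gamma_k\in(\alpha_1,\alpha_2)$ for all $k$. For each $k$ pick a waypoint $x_k$ on the ray $\{\arg w=\gamma_k\}\cap z(U_1)$, with $x_0=P$ and $x_N=Q$. Each consecutive pair $x_{k-1},x_k$ lies in the sub-sector $V_k:=\{\gamma_{k-1}\le\arg w\le\gamma_k,\ 0<|w|<r\}$, of angular width $<\theta$. Exactly as in the previous lemma, the local isometry $J_k$, namely a branch of $w\mapsto w^{\rho+1}$ up to a unimodular constant, carries $V_k$ onto a circular sector of opening angle $(\gamma_k-\gamma_{k-1})(\rho+1)<\pi$; such a sector is convex, so the Euclidean segment joining $J_k(x_{k-1})$ and $J_k(x_k)$ stays inside it (hence inside the disc of radius $r^{\rho+1}$) and pulls back to a simple geodesic $g_k$ contained in $V_k$. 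Concatenating $g_1,\dots,g_N$ yields a broken geodesic from $P$ to $Q$ whose support lies in $\{\arg w\in[\gamma_0,\gamma_N]\}\cap\{0<|w|<r\}\subset z(U_1)$, so it does not leave $U_1$.

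The crux is to verify that this broken geodesic is simple. Here I would exploit that each image sector has opening angle strictly less than $\pi$: for such a sector, the open segment joining a point on one edge to a point on the other edge lies in the open sector (and never passes through the vertex, since the endpoints are non-antipodal and nonzero). Translating back by $J_k$, the interior of each $g_k$ has argument strictly in $(\gamma_{k-1},\gamma_k)$, while $g_k$ meets the bounding rays $\{\arg w=\gamma_{k-1}\}$ and $\{\arg w=\gamma_k\}$ only at its endpoints $x_{k-1},x_k$. Consequently two consecutive pieces $g_k,g_{k+1}$ meet only at the shared waypoint $x_k$, because their angular ranges $[\gamma_{k-1},\gamma_k]$ and $[\gamma_k,\gamma_{k+1}]$ overlap only at $\gamma_k$; and two non-consecutive pieces have disjoint angular ranges, hence disjoint supports. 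This establishes injectivity of the whole path, i.e.\ simplicity.

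I expect the only delicate point to be this strict-convexity step guaranteeing that each geodesic piece touches the rays $\{\arg w=\gamma_{k-1}\}$ and $\{\arg w=\gamma_k\}$ solely at its endpoints; this is precisely why the angular gaps are chosen strictly smaller than $\theta$, so that the image sectors have opening angle strictly below $\pi$ and the vertex is avoided. The degenerate case $\arg w_P=\arg w_Q$ and the containment of every piece in $\{|w|<r\}$ (again by convexity of the image sector) are routine.
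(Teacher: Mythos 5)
Your proof is correct and is essentially the argument the paper intends: its one-line proof (``It follows from the previous lemma'') is precisely this subdivision of the sector into sub-sectors of angular width less than $\frac{\pi}{\rho+1}$, on each of which the local isometry $w\mapsto w^{\rho+1}$ maps onto a convex circular sector so that consecutive waypoints can be joined by a geodesic arc, and then concatenating. Your explicit check of simplicity via the strictly increasing angular ranges (and the avoidance of the vertex because the image sectors have opening strictly less than $\pi$) is a detail the paper leaves implicit, and it is carried out correctly.
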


\begin{proof}
It follows from the previous lemma.
\end{proof}

\subsection{Generalized Teichm{\"u}ller formula}\label{abateteichmullertovenateng}
 In this subsection we introduce a generalized Teichm{\"u}ller formula, which is an extension of \eqref{geodesicequality2}.
 
\begin{definition}
  Let $\nabla$ be a meromorphic connection on a Riemann surface $S$. Let $R_0$ be a geodesic polygon and $p$ a vertex of $R_0$. Let $\varepsilon\in(-\pi,\pi)$ be the external angle of $R_0$ in $p$. We say that $v=\pi-\varepsilon$ is the \emph{internal angle} of $R_0$ in $p$.
\end{definition}

Let us prove the main lemma of this section.

\begin{lemma}\label{equalityadaptedda}
 Let $\nabla$ be a meromorphic connection on a Riemann surface $S$. Let $p_0$ be a Fuchsian pole for $\nabla$ with $\rho:=\operatorname{Res}_{p_0}\nabla>-1$. Let $(U,z)$ be a chart adapted to $(\nabla,p_0)$.  Let $R_0\subset U$ be an $s-$sided Fuchsian geodesic polygon with vertices $p_0,...,p_{s-1}$ such that $p_j\ne p_0$ for $j=1,...,s-1$. Let $v_j\in[0,2\pi)$ be the internal angle in $p_j$. Then
\begin{equation}\label{geodesicequalityfuchs}
  \sum_{j=1}^{s-1}\left(\pi-v_j\right)=\pi+v_0({\rho+1})\;.
\end{equation}
\end{lemma}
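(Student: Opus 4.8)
The plan is to push the polygon $R_0$ forward by a local isometry of $\nabla$ on the adapted chart, turning it into an honest Euclidean polygon, and then to read off the angle identity from the elementary fact that the interior angles of a Euclidean $s$-gon sum to $(s-2)\pi$. On the adapted chart the local representation is $\eta=\frac{\rho}{z}\,dz$, so $K=\rho\log z$ is a (branch of a) primitive of $\eta$, and the associated local isometry $J$ of Definition~\ref{d:isometry} satisfies $J'=\exp K=z^\rho$; hence $J(z)=\frac{1}{\rho+1}z^{\rho+1}$ up to an affine change (recall $\rho+1>0$, so $J(p_0)=J(0)=0$). Since $p_0$ is the only pole inside $U$, all the remaining vertices $p_1,\dots,p_{s-1}$ are regular points, and the two sides of $R_0$ issuing from $p_0$ are critical geodesics, i.e.\ radial segments $\arg z=\mathrm{const}$ in the $z$-coordinate. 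I read the internal angle $v_0$ as the $z$-Euclidean angle subtended by these two radii.

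First I would fix a branch of $J$ on the angular sector cut out by $R_0$ at $p_0$ and transport $R_0$ by $J$. By Proposition~\ref{p:localis}(3) every side of $R_0$, being a geodesic, is carried to a Euclidean segment, so $J(R_0)$ is a Euclidean polygon with ordered vertices $0=J(p_0),J(p_1),\dots,J(p_{s-1})$. The crucial computation is the behaviour of angles under $J$. Away from $p_0$ the map $J$ is conformal (since $J'=z^\rho\ne0$ there), so it preserves the internal angles at the regular vertices: the interior angle of $J(R_0)$ at $J(p_j)$ equals $v_j$ for $j=1,\dots,s-1$. At the origin, instead, $w=J(z)$ satisfies $\arg w=(\rho+1)\arg z+\mathrm{const}$, so the sector of $z$-opening $v_0$ is mapped to a sector of opening $(\rho+1)v_0$; thus the interior angle of $J(R_0)$ at $0$ is $(\rho+1)v_0$.

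It then remains to apply the Euclidean angle-sum relation to the $s$-gon $J(R_0)$:
\[
(\rho+1)v_0+\sum_{j=1}^{s-1}v_j=(s-2)\pi,
\]
and to rearrange, using $\sum_{j=1}^{s-1}(\pi-v_j)=(s-1)\pi-\sum_{j=1}^{s-1}v_j$, which yields exactly $\sum_{j=1}^{s-1}(\pi-v_j)=\pi+v_0(\rho+1)$, as claimed.

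The step I expect to require the most care is justifying that $J(R_0)$ is a genuine simple closed polygon traversed once, so that the clean value $(s-2)\pi$ for the angle sum is legitimate. This is where the hypothesis that $R_0$ is a simple $s$-sided geodesic polygon inside the single adapted chart is used: simplicity of $R_0$ together with the fact that $J$ develops the adapted metric $g^{\frac12}=|z|^\rho|dz|$ onto the Euclidean metric should give injectivity of $J$ on $R_0$ (equivalently, that the boundary of $J(R_0)$ has turning number one). Should the opening $(\rho+1)v_0$ ever exceed $2\pi$, global injectivity could fail; in that case I would replace the elementary angle-sum argument by the Gauss--Bonnet formula for the flat cone metric $g$, whose interior has zero curvature while the boundary vertex $p_0$ contributes the cone-scaled angle $(\rho+1)v_0$, obtaining $\sum(\pi-\theta_j)=2\pi$ with $\theta_0=(\rho+1)v_0$ and $\theta_j=v_j$, and hence the same identity.
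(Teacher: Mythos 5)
Your developing map is the right object---the paper's own proof uses the very same local isometry $J(z)=z^{\rho+1}/(\rho+1)$---but your primary argument has a genuine gap at the injectivity step, and your case split gates the (correct) fallback on the wrong condition. Simplicity of $R_0$ does \emph{not} give injectivity of $J$ on $R_0$, even when $(\rho+1)v_0\le 2\pi$. The polygon is only required to be a Jordan domain with one vertex at $p_0$; nothing confines it to the sector spanned by its two radial sides. Take a crescent-shaped $R_0$ whose aperture $v_0$ at $p_0$ is tiny but whose total angular extent around $p_0$ (the variation of $\arg z$ over $R_0$) is close to $2\pi$: for $\rho>0$ large (and such $\rho$ are allowed, since the only hypothesis is $\rho>-1$) the image $J(R_0)$ wraps around the origin several times and overlaps itself, while $(\rho+1)v_0$ can stay well below $2\pi$. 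Then $J(\partial R_0)$ is not a simple closed polygon, and the $(s-2)\pi$ angle sum you invoke is unavailable; what survives is only that the boundary of an immersed disc has turning number one. So the clean Euclidean-polygon argument cannot be the main route, and the Gauss--Bonnet argument you keep in reserve must carry the whole proof in every case, not only when $(\rho+1)v_0>2\pi$.

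The good news is that the fallback does work unconditionally, since Gauss--Bonnet for a flat metric with geodesic sides and a conical boundary vertex needs no injectivity of the developing map: $\sum_j(\pi-\theta_j)=2\pi$ with $\theta_0=(\rho+1)v_0$ and $\theta_j=v_j$ at the regular vertices, which rearranges to \eqref{geodesicequalityfuchs}. Two caveats, though. First, the exterior angle $\pi-(\rho+1)v_0$ may lie outside $(-\pi,\pi)$, so you must invoke (or prove) the version of Gauss--Bonnet valid for cone points of arbitrary angle on the boundary, which is not the textbook statement with corners. Second, proving that version amounts to exactly the computation the paper performs directly: along each geodesic side $z^\rho\,dz(\gamma')$ is constant, hence $d(\arg dz(\gamma'))=-d(\arg z^\rho)$; integrating over $\partial R_0\setminus\{p_0\}$, summing the jumps $v_j-\pi$ at the regular vertices, and using that the two sides at $p_0$ are radial (so $\arg z$ sweeps exactly $v_0$) yields $-\rho v_0=\sum_{j=1}^{s-1}(v_j-\pi)+\pi+v_0$, i.e.\ the claim. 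If you promote this boundary-integration (or a correctly stated cone-vertex Gauss--Bonnet) to the main argument, the proof is complete; as written, the case where $(\rho+1)v_0\le2\pi$ but $J|_{R_0}$ fails to be injective falls through both branches of your argument.
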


\begin{proof}
Let denote by $\sigma_j:[0,l_j]\to U$ the geodesics composing $\partial R_0$.
Since $R_0\subset U$ is a simply connected domain and $p_0\in \partial R_0$, there exists a non-self-intersecting smooth curve $\sigma$ in $\overline{U}\setminus R_0$ connecting $p_0$ with a point in $\partial U$. Set $U_1=U\setminus\mathrm{supp}(\sigma)$. Let
\[
\eta=\frac{\rho}{z}dz
\]
 be the local representation of $\nabla$ on $U$. Note that $U_1$ is a simply connected domain and $\eta$ is holomorphic on~$U_1$. Then $J(z)=\frac{z^{\rho+1}}{\rho+1}$ is  a local isometry of $\nabla$ on $U_1$.  Let $\gamma:[0,\varepsilon)\to U_1$ be a geodesic of $\nabla$. Then there exists $a\in\mathbb{C}$ and $b\in\mathbb{C}$ such that $$J(z(\gamma(t)))=at+b.$$
 Hence we have
\[
\bigl(z(\gamma)\bigr)^\rho dz(\gamma')=d(J\circ z)_\gamma(\gamma')\equiv a.
\]
Thus,
\[
0=d(\arg d(J\circ z)(\gamma'))= d(\arg (z(\gamma))^\rho)+d(\arg dz(\gamma')).
\]
Since $\partial R_0\setminus\{p_0\}$ is composed by geodesics we then have
\[
\int_{\partial R_0\setminus\{p_0\}}d(\arg z^{\rho})=-\int_{\partial R_0\setminus\{p_0\}}d(\arg dz).
\]
It is easy to see that
\[
\begin{aligned}
  \int\limits_{\partial R_0\setminus\{p_0\}}d(\arg z^{\rho})= & \int\limits_{\sigma_1\mid_{(0,l_1]}} d(\arg z^{\rho})+\sum_{j=2}^{s-1}\int\limits_{\sigma_j} d(\arg z^{\rho})+\int\limits_{\sigma_s\mid_{[0,l_s)}} d(\arg z^{\rho}) \\
  = & \arg z^\rho(\sigma_1(l_1))-\lim_{t\searrow 0}\arg z^\rho(\sigma_1(t))\\
  & +\sum_{j=2}^{s-1}\Bigl(\arg z^\rho(\sigma_j(l_j))-\arg z^\rho( \sigma_j(0))\Bigr)\\
  & + \lim_{t\nearrow l_s}\arg z^\rho(\sigma_s(t))-\arg z^\rho( \sigma_s(0))\\
  =& :{\sum}_1
\end{aligned}
\]
Since $\sigma_j(l_j)=\sigma_{j+1}(0)=p_{j}$  for $j=1,...,s-1,$ and $\sigma_s(l_s)=\sigma_1(0)=p_0$, we have
\[
{\sum}_1=\rho\left(\lim_{t\nearrow l_s}\arg z(\sigma_s(t))-\lim_{t\searrow 0}\arg z(\sigma_1(t))\right).
\]
Since $\sigma_1$ and $\sigma_s$ are critical geodesics of $\nabla$ on $U$, we also have $\arg z(\sigma_1(t))=\alpha_1$  and $\arg z(\sigma_s(t))=\alpha_s$ for some $\alpha_1$,~$\alpha_s\in\mathbb{R}$ with $\alpha_s-\alpha_1=v_0$. Hence
\[
\lim_{t\nearrow l_s}\arg z(\sigma_s(t))-\lim_{t\searrow 0}\arg z(\sigma_1(t))=v_0.
\]
Consequently,
\[
\int\limits_{\partial R_0\setminus\{p_0\}}d(\arg z^{\rho})=\rho v_0.
\]
On the other hand,
\[
\begin{aligned}
  \int\limits_{\partial R_0\setminus\{p_0\}}d(\arg dz) =& \int\limits_{\sigma_1\mid_{(0,l_1]}}d(\arg dz)+\sum_{j=2}^{s-1}\int\limits_{\sigma_j}d(\arg dz)+\int\limits_{\sigma_s\mid_{[0,l_s)}}d(\arg dz) \\
  =&\sum_{j=1}^s\int_{0}^{l_j}d(\arg dz(\sigma_j'(t)))\\
   =&\sum_{j=1}^s(\arg dz(\sigma_j')(l_j))-\arg dz(\sigma_j')(0)) \\
   =&\sum_{j=1}^{s-1}(\arg dz(\sigma_j')(l_j)-\arg dz(\sigma_{j+1}')(0))  \\
   &+\arg dz(\sigma_s')(l_s)-\arg dz(\sigma_1')(0)\\
   =&:{\sum}_2.
\end{aligned}
\]
Since
\[
\arg dz(\sigma_j')(l_j)-\arg dz(\sigma_{j+1}')(0)=v_j-\pi
\]
and
\[
\arg dz(\sigma_s')(l_s)-\arg dz(\sigma_1')(0)=v_0+\pi\;,
\]
we have
\[
{\sum}_2=\sum_{j=1}^{s-1}(v_j-\pi) +\pi+v_0.
\]
 Consequently,
\[
   -\rho v_0  =\sum_{j=1}^{s-1}(v_j-\pi) +\pi+v_0,
\]
 that is,
\[
      \sum_{j=1}^{s-1}(\pi-v_j)  =\pi+v_0({\rho+1}),
\]
as claimed.
\end{proof}

\begin{theorem}\label{t:attf}
  Let $\nabla$ be a meromorphic connection on a compact Riemann surface $S$ with poles $\{p_1,...,p_r\}$ and set $S^o=S\setminus\{p_1,...,p_r\}$. Let $P$ be a part of $S$ whose boundary is a Fuchsian multicurve $\gamma\subset S$ with $m_f\ge1$ free components, positively oriented with respect to $P$. Let $q_1,...,q_s$ denote the vertices of the free components of $\gamma$ and $v_j\in(0,2\pi)$ the internal angle at $q_j$; moreover, assume that $\rho_j:=\operatorname{Res}_{q_j}\nabla>-1$ for $j=1,\ldots,s$. Suppose that $P$ contains the poles $\{p_1,...,p_g\}$ and denote by $g_{\widetilde{P}}$ the genus of the filling $\widetilde{P}$ of~$P$. Then
\begin{equation}\label{genTeich}
  \sum_{j=1}^{s}\left(\pi-(\rho_j+1)v_j\right)=2\pi\left(2-m_f-2g_{\widetilde{P}}+\sum_{j=1}^{g}\operatorname{Re}\,\mathrm{ Res}_{p_j}(\nabla)\right).
\end{equation}
\end{theorem}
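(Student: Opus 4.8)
The plan is to reduce \eqref{genTeich} to the classical formula \eqref{geodesicequality2} of Theorem~\ref{fabrizioequality} by a local surgery that replaces each Fuchsian-pole vertex of the free boundary by a corner made only of regular points. For each free vertex $q_j$ I fix a chart $(U_j,z_j)$ adapted to $(\nabla,q_j)$; near $q_j$ the part $P$ is a sector bounded by the two critical geodesics issuing from $q_j$, whose chart-angle is $v_j$ (so the metric angle is $(\rho_j+1)v_j$, exactly as in Lemma~\ref{equalityadaptedda}). Choosing points $a_j,b_j$ on these two edges close to $q_j$ and joining them inside the sector by a simple broken geodesic $\beta_j$ (possible by Corollary~\ref{graphichidanchiq}), I cut off the small Fuchsian geodesic polygon $R_0^{j}\subset U_j$ bounded by $[q_j,a_j]$, $\beta_j$ and $[b_j,q_j]$, and set $P'=P\setminus\bigcup_j R_0^{j}$. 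Taking $a_j,b_j$ close enough to $q_j$ and the $\beta_j$ disjoint from the rest of $\partial P$, the surgery is local: $P'$ is again a part, its boundary $\gamma'$ (made of truncated saddle connections together with the $\beta_j$) is a \emph{regular} multicurve contained in $S^o$, and the free-component count $m_f$, the interior poles $p_1,\dots,p_g$ (each $q_j$ now lies outside $\overline{P'}$), and the filling genus $g_{\widetilde P}$ are all unchanged, since every $R_0^{j}$ is a disc removed along an arc of $\partial P$.

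Next I would compute the local angle contributions. Applying Lemma~\ref{equalityadaptedda} to each $R_0^{j}$ (with $p_0=q_j$, $\rho=\rho_j$ and pole-angle $v_0=v_j$), the sum of the external angles $\varepsilon^{R}$ of $R_0^{j}$ over its regular vertices equals $\pi+(\rho_j+1)v_j$. It then remains to pass from the external angles of $R_0^{j}$ to the external angles $\varepsilon^{P'}$ of $P'$ at the newly created vertices. At an interior break point $c$ of $\beta_j$ the regions $P'$ and $R_0^{j}$ fill the full angle $2\pi$, whence $\varepsilon^{P'}_c=-\varepsilon^{R}_c$; at the endpoints $a_j,b_j$, which lie in the interior of the straight edges $e_1,e_2$, the two regions fill the half-plane angle $\pi$, whence $\varepsilon^{P'}_{a_j}=\pi-\varepsilon^{R}_{a_j}$ and $\varepsilon^{P'}_{b_j}=\pi-\varepsilon^{R}_{b_j}$. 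Summing over the new vertices arising near $q_j$ and using Lemma~\ref{equalityadaptedda},
\[
\sum_{\text{new vert.\ near }q_j}\varepsilon^{P'}=2\pi-\sum_{\text{reg.\ vert.\ of }R_0^{j}}\varepsilon^{R}=2\pi-\bigl(\pi+(\rho_j+1)v_j\bigr)=\pi-(\rho_j+1)v_j,
\]
which is precisely the $j$-th summand on the left-hand side of \eqref{genTeich}.

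Finally I would invoke Theorem~\ref{fabrizioequality} for the regular part $P'$. Its free vertices are exactly the new vertices produced near the $q_j$, so the sum of their external angles equals $2\pi\bigl(2-m_f-2g_{\widetilde P}+\sum_{j=1}^{g}\operatorname{Re}\operatorname{Res}_{p_j}\nabla\bigr)$ by the invariance of the topological data established above. Combining this with the per-pole computation gives $\sum_{j=1}^{s}\bigl(\pi-(\rho_j+1)v_j\bigr)$ equal to the same right-hand side, i.e.\ \eqref{genTeich}. (The observation that the formula specializes correctly at a regular vertex, where $\rho_j=0$ and $\pi-(\rho_j+1)v_j=\pi-v_j$ is the ordinary external angle, is a useful consistency check.)

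The step I expect to be the main obstacle is the bookkeeping of the angle relations together with the orientation conventions: one must verify that the orientation of $\partial R_0^{j}$ used in Lemma~\ref{equalityadaptedda} is compatible with the positive orientation of $\partial P'$, and that the complementarity relations (full angle $2\pi$ at interior breaks, half-plane angle $\pi$ at the $a_j,b_j$) carry the correct signs. A secondary point requiring care is that the surgery genuinely preserves all data entering \eqref{geodesicequality2} — $m_f$, the interior residues, and $g_{\widetilde P}$ — and that vertices of surrounded components, which do not contribute to the formula, can be resolved in the same way without affecting the free-vertex count.
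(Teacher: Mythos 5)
Your proposal is correct and takes essentially the same route as the paper's own proof: the paper likewise cuts off a small Fuchsian geodesic polygon $P_j$ near each pole vertex $q_j$ via a broken geodesic $\Lambda_j\subset U_j\cap P$ furnished by Corollary~\ref{graphichidanchiq}, applies Theorem~\ref{fabrizioequality} to the regular part $\hat P=P\setminus\bigcup_j\overline P_j$, and uses Lemma~\ref{equalityadaptedda} on each $P_j$ with exactly your complementarity relations (interior angle $2\pi-\beta_{m_j}$ at breaks of $\Lambda_j$, $\pi-\alpha^k_j$ at the truncation points), yielding the identity $\pi+(\rho_j+1)v_j=\sum_{k=1}^2\alpha_j^k+\sum_{m_j}(\beta_{m_j}-\pi)$, which is your per-vertex computation $\sum\varepsilon^{P'}=\pi-(\rho_j+1)v_j$ in different notation. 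Your orientation and bookkeeping worries resolve exactly as in the paper, and your treatment of regular vertices (keeping them as vertices of $P'$, with $\rho_j=0$) matches the paper's convention $\alpha^k_j=\pi$, $P_j=\emptyset$ when $q_j$ is not a pole.
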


\begin{proof}
Denote by $\sigma_j\colon [0,l_j]\to S$ the smooth geodesics composing $\partial P$.  When $q_j$ is a pole, let $(U_j,z_j)$ be a chart adapted to  $(\nabla,q_j)$. By Corollary \ref{graphichidanchiq} and recalling the shape of critical geodesics,  there exists a broken geodesic $\Lambda_j$ connecting  $\sigma_j$ and $\sigma_{j+1}$ not leaving $U_j\cap P$ and not intersecting $\partial P$ outside the extremes. Let $z^1_j\in \sigma_j$ and $z^2_j\in\sigma_{j+1}$ the extremes of $\Lambda_j$.
Let $P_j$ be the simply connected component of $P\setminus \Lambda_j$ containing $q_j$ in its boundary. When $q_j$ is not a pole, put $P_j=\emptyset$. 
 Set
\[
\hat{P}=P\setminus\bigcup\limits_{j=1}^s\overline{P}_j.
\]
By construction $\hat{P}$ is a regular part such that the filling of $P$ and the filling of $\hat{P}$ have the same genus. For $k=1$,~$2$, denote by $\alpha^k_j$ the interior angle of $\hat{P}$ at $z^k_j$;  we put $\alpha^k_j=\pi$ when $q_j$ is not a pole. Denote other interior angles by $\beta_m$. By Theorem \ref{fabrizioequality}, we have
\begin{equation}\label{ch3s5}
  \sum_{j=1}^{s}\sum_{k=1}^2(\pi-\alpha^k_j)+\sum_{m}(\pi-\beta_m)=2\pi\left(2-m_f-2g_{\widetilde{P}}+\sum_{j=1}^{g}\operatorname{Re}\,\mathrm{ Res}_{p_j}(\nabla)\right).
\end{equation}
By using Lemma \ref{equalityadaptedda} for $P_j$ when $q_j$ is a pole we have
\[
\begin{aligned}
   \pi+(\rho_j+1)v_j=& \sum_{k=1}^{2}(\pi-(\pi-\alpha_j^k))+\sum_{m_j}(\pi-(2\pi-\beta_{m_j}))\\
  =& \sum_{k=1}^{2}\alpha_j^k+\sum_{m_j}(\beta_{m_j}-\pi),
\end{aligned}
\]
where $\beta_{m_j}$ is an interior angle of $\hat{P}$ adjacent to $P_j$. By summing up the last equality on all poles we have
\begin{equation}\label{tenglik12}
  \sum_{n=1}^{s'}\sum_{k=1}^2\alpha_{j_n}^k+\sum_{m_{j_n}}(\beta_{m_{j_n}}-\pi)=\sum_{n=1}^{s'}(\pi+(\rho_{j_n}+1)v_{j_n}),
\end{equation}
where $j_n\in\{1,...,s\}$ is such that $q_{j_n}$ is a pole and $s'$ is the number of poles in $\{q_1,...,q_s\}$. Since $\alpha^k_j=\pi$ when $q_j$ is not a pole, comparing \eqref{ch3s5} and \eqref{tenglik12} we have
\[
2\pi s-\sum_{j=1}^{s}(\pi+(\rho_j+1)v_j)=2\pi\left(2-m_f-2g_{\widetilde{P}}+\sum_{j=1}^{g}\operatorname{Re}\,\mathrm{ Res}_{p_j}(\nabla)\right).
\]
Consequently,
\[
\sum_{j=1}^{s}(\pi-(\rho_j+1)v_j)=2\pi\left(2-m_f-2g_{\widetilde{P}}+\sum_{j=1}^{g}\operatorname{Re}\,\mathrm{ Res}_{p_j}(\nabla)\right),
\]
as claimed.
\end{proof}

\begin{corollary}
   \label{equalityfuchsianP1}
  Let $\nabla$ be a meromorphic connection on $\mathbb{P}^1(\mathbb{C})$, with poles $\{p_0=\infty,p_1,...,p_r\}$, and set $S=\mathbb{P}^1(\mathbb{C})\setminus\{p_0,...,p_r\}\subseteq\mathbb{C}$.  Let $R_0\subset\mathbb{P}^1(\mathbb{C})$ be an $s-$sided Fuchsian geodesic polygon with vertices $q_1,...,q_s$ so that $\rho_j:=\operatorname{Res}_{q_j}\nabla>-1$. For $j=1,...,s$  let $v_j\in[0,2\pi)$ be the internal angle in $q_j$ and let $\{p_1,...,p_g\}$ be the poles of $\nabla$ contained in $R_0$.
Then
\begin{equation}\label{geodesicequalityfuch}
  \sum_{j=1}^s\left(\pi-v_j({\rho_j+1})\right)=2\pi\left(1+\sum_{i=1}^{g}\operatorname{Re}\,\mathrm{ Res}_{p_i}\nabla\right).
\end{equation}
\end{corollary}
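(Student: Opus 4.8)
The plan is to read this corollary as the specialization of Theorem~\ref{t:attf} to $S=\mathbb{P}^1(\mathbb{C})$, with the part $P$ taken to be $\overline{R_0}$. First I would verify that $\overline{R_0}$ is an admissible part for that theorem. By definition a Fuchsian geodesic polygon is a connected open set whose boundary is a single Fuchsian geodesic cycle; hence the boundary multicurve $\gamma$ of $P=\overline{R_0}$ has exactly one component. The interior of $R_0$ lies on only one side of this cycle, while the complement $\mathbb{P}^1(\mathbb{C})\setminus\overline{R_0}$ lies on the other, so the component is \emph{free} rather than surrounded, giving $m_f=1$. The hypothesis $\rho_j=\operatorname{Res}_{q_j}\nabla>-1$ on the vertices is precisely the one Theorem~\ref{t:attf} requires, and the vertices $q_1,\dots,q_s$ of $R_0$ are exactly the vertices of the free boundary cycle, with internal angles $v_j$ in both statements.

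Next I would compute the genus of the filling. A geodesic polygon in $\mathbb{P}^1(\mathbb{C})$ is a topological disk, and its single boundary component is free; gluing a disk along that component yields a surface homeomorphic to the $2$-sphere, so $g_{\widetilde{P}}=0$. I would emphasize that this topological computation is insensitive to whether the pole $p_0=\infty$ happens to lie inside $R_0$: the region $R_0$ is a disk in either case, and the poles counted on the right-hand side are, by definition, exactly those contained in $R_0$, namely $\{p_1,\dots,p_g\}$.

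With $m_f=1$ and $g_{\widetilde{P}}=0$ established, the last step is pure substitution into \eqref{genTeich}: the right-hand side $2\pi\bigl(2-m_f-2g_{\widetilde{P}}+\sum_{j=1}^{g}\operatorname{Re}\operatorname{Res}_{p_j}\nabla\bigr)$ collapses to $2\pi\bigl(1+\sum_{i=1}^{g}\operatorname{Re}\operatorname{Res}_{p_i}\nabla\bigr)$, which is exactly the right-hand side of \eqref{geodesicequalityfuch}, while the left-hand side $\sum_{j=1}^{s}(\pi-(\rho_j+1)v_j)$ matches verbatim. There is essentially no analytic obstacle here; the only point demanding care is the topological bookkeeping, i.e.\ confirming that $R_0$ is a disk so that the boundary cycle contributes a single free component and the filling has genus zero. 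Once those two invariants are pinned down, the identity is immediate.
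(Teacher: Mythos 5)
Your proposal is correct and is exactly the argument the paper intends: the corollary is stated as an immediate specialization of Theorem~\ref{t:attf} (compare the proof of Corollary~\ref{ikkitaliktenglik}), taking $P=\overline{R_0}$ so that the single boundary cycle is a free component ($m_f=1$) and the filling is a sphere ($g_{\widetilde{P}}=0$), whence \eqref{genTeich} collapses to \eqref{geodesicequalityfuch}. Your explicit verification that $R_0$ is a Jordan domain on the sphere, and that the conclusion is insensitive to whether $p_0=\infty$ lies in $R_0$, supplies precisely the topological bookkeeping the paper leaves implicit.
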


\begin{remark}
If the boundary of $P$ in Theorem~\ref{t:attf} is regular then the formula \eqref{genTeich} reduces to \eqref{geodesicequality2}. If $\nabla$ is Fuchsian with residues in $\frac{1}{2}\mathbb{Z}$ then Corollary \ref{equalityfuchsianP1} gives Teichm{\"u}ller's lemma (see \cite[Theorem 14.1]{SK}). This is the reason why we call 
\eqref{genTeich} generalized Teichm{\"u}ller formula. 
\end{remark}

As a consequence of the generalized Teichm{\"u}ller formula we have the following corollary.

\begin{corollary}\label{ikkitaliktenglik}
  Let $\nabla$ be a meromorphic connection on a compact Riemann surface $S$, with poles $\{p_0=\infty,p_1,...,p_r\}$. Let $P\subset S$ be a simply connected part 
  of~$S$ with connected boundary given by a Fuchsian $2$-geodesic cycle with vertices $z_0$ and $z_1$. Let $v_j\in(0,2\pi)$ the internal angle at $z_j$ and assume that $\operatorname{Res}_{z_j}\nabla>-1$ for $j=0$,~$1$.
 Let $\{p_1,...,p_g\}$ be the poles of $\nabla$ contained in the interior of $P$. Then
\begin{equation}\label{2polygonequality3}
  (\operatorname{Res}_{z_0}\nabla+1)v_0+(\operatorname{Res}_{z_1}\nabla+1)v_1=-2\pi\sum_{j=1}^{g}\operatorname{Re}\,\mathrm{ Res}_{p_j}(\nabla).
\end{equation}
\end{corollary}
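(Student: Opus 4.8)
The plan is to deduce this as a direct specialization of the generalized Teichm\"uller formula (Theorem~\ref{t:attf}) applied to the part~$P$, so the entire task reduces to reading off the correct topological bookkeeping from the hypotheses and simplifying.

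First I would identify the quantities entering \eqref{genTeich}. Since $P$ is simply connected with connected boundary, the boundary multicurve~$\gamma$ consists of a single cycle; being the boundary of a topological disc, this cycle is \emph{free} (a tubular neighbourhood of it meets the interior of $P$ on only one side), so $m_f=1$. By assumption $\gamma$ is a Fuchsian $2$-geodesic cycle, hence it has exactly $s=2$ vertices, namely $z_0$ and $z_1$, with internal angles $v_0,v_1\in(0,2\pi)$. The filling $\widetilde{P}$ is obtained by gluing a disc along the unique free component of~$\gamma$; gluing a disc to the disc $P$ produces a sphere, so $g_{\widetilde{P}}=0$. The hypothesis $\operatorname{Res}_{z_j}\nabla>-1$ for $j=0,1$ is precisely the condition $\rho_j:=\operatorname{Res}_{q_j}\nabla>-1$ required to invoke Theorem~\ref{t:attf}, and the poles of $\nabla$ lying in the interior of~$P$ are exactly $\{p_1,\dots,p_g\}$.

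Then I would substitute $s=2$, $m_f=1$, $g_{\widetilde{P}}=0$ into \eqref{genTeich}, writing $\rho_j=\operatorname{Res}_{z_j}\nabla$. The left-hand side becomes $2\pi-(\rho_0+1)v_0-(\rho_1+1)v_1$, while the right-hand side becomes $2\pi\bigl(2-1-0+\sum_{j=1}^{g}\operatorname{Re}\operatorname{Res}_{p_j}\nabla\bigr)=2\pi+2\pi\sum_{j=1}^{g}\operatorname{Re}\operatorname{Res}_{p_j}\nabla$. Cancelling the common summand~$2\pi$ from both sides and multiplying through by $-1$ gives
\[
(\operatorname{Res}_{z_0}\nabla+1)v_0+(\operatorname{Res}_{z_1}\nabla+1)v_1=-2\pi\sum_{j=1}^{g}\operatorname{Re}\operatorname{Res}_{p_j}\nabla,
\]
which is exactly \eqref{2polygonequality3}.

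I do not expect any substantive obstacle here: the only point needing care is the topological counting, i.e.\ that the single boundary cycle is counted as one free component and that the filling has genus zero. Both facts are immediate from the assumption that $P$ is simply connected with connected boundary, so the corollary is a straightforward consequence of the generalized Teichm\"uller formula.
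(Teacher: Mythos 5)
Your proposal is correct and follows exactly the paper's own route: the paper proves this corollary in one line by specializing the generalized Teichm\"uller formula \eqref{genTeich} with $s=2$ and genus-zero filling, which is precisely your substitution $s=2$, $m_f=1$, $g_{\widetilde{P}}=0$. Your explicit verification of the topological bookkeeping (the single boundary cycle is free, and gluing a disc to the disc $P$ yields a sphere) and the ensuing arithmetic are both accurate.
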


\begin{proof}
  It follows from \eqref{genTeich} with $s=2$ because the filling of $P$ has genus zero.
\end{proof}

\subsection{Uniqueness of geodesics}\label{uniquenessofgeodesicsss}
In this section we shall prove a result about uniqueness of geodesics in a given homotopy class.

\begin{corollary}\label{rhokattanoldandauniqueness}
  Let $\nabla$ be a Fuchsian meromorphic connection in a simply connected domain $D$ with non-negative residues. For $i=0,1,$ let $\sigma_i\colon [0,l_i]\to D$ be a smooth simple curve such that $\sigma_0(0)=z_0=\sigma_1(0)$ and $\sigma_0(l_0)=z_1=\sigma_1(l_1)$. If $\sigma_0\mid_{(0,l_0)}$ and  $\sigma_1\mid_{(0,l_1)}$ are geodesics for $\nabla$   then $\mathrm{supp}(\sigma_0)= \mathrm{supp}(\sigma_1)$. In  other words, any two points of $D$ can be joined by at most one simple geodesic arc.
\end{corollary}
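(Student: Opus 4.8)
The plan is to prove uniqueness by contradiction, assuming two distinct simple geodesic arcs between $z_0$ and $z_1$ and extracting a Fuchsian geodesic polygon to which the generalized Teichm\"uller formula (specifically Corollary~\ref{ikkitaliktenglik}) applies, deriving a sign contradiction from the non-negativity of the residues.

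First I would suppose $\mathrm{supp}(\sigma_0)\neq\mathrm{supp}(\sigma_1)$. Since both arcs share the endpoints $z_0$ and $z_1$ and lie in the simply connected domain $D$, their union (or a suitable sub-union of arcs) bounds at least one region. The immediate difficulty is that $\sigma_0$ and $\sigma_1$ may intersect each other at interior points, so their union need not be a clean $2$-geodesic cycle. To handle this I would pass to the first point along $\sigma_0$ (starting from $z_0$) at which it meets $\sigma_1$, and likewise track the corresponding parameter on $\sigma_1$; between $z_0$ and this first intersection point the two arcs are disjoint except at the endpoints, so they bound a simply connected part $P\subset D$ whose boundary is a Fuchsian $2$-geodesic cycle with two vertices. (If the arcs coincide up to the first intersection, I would push further along, repeating the argument on the next divergent sub-arc; since the arcs are distinct, such a genuinely bounded lens region exists.)

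Next I would apply Corollary~\ref{ikkitaliktenglik} to this part $P$. Because $D$ is simply connected and $\nabla$ is Fuchsian, the vertices $w_0,w_1$ of the $2$-geodesic cycle are either regular points (residue $0$) or Fuchsian poles with residue $\geq 0$ by hypothesis, so in particular $\operatorname{Res}_{w_j}\nabla>-1$ and the corollary applies. Formula~\eqref{2polygonequality3} then gives
\[
(\operatorname{Res}_{w_0}\nabla+1)v_0+(\operatorname{Res}_{w_1}\nabla+1)v_1=-2\pi\sum_{j=1}^{g}\operatorname{Re}\operatorname{Res}_{p_j}\nabla,
\]
where $v_0,v_1\in(0,2\pi)$ are the internal angles and $p_1,\dots,p_g$ are the poles inside $P$. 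The left-hand side is strictly positive, since each factor $\operatorname{Res}_{w_j}\nabla+1\geq 1>0$ and each internal angle $v_j>0$. On the other hand, every pole $p_j$ inside $P\subset D$ also has non-negative residue by hypothesis, so the right-hand side is $\leq 0$. This is the contradiction that closes the argument.

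The main obstacle I anticipate is the combinatorial/topological bookkeeping in the first step: ensuring that one genuinely isolates a simply connected lens region bounded by exactly a $2$-geodesic cycle, rather than a more complicated configuration, when the two geodesics cross several times. The cleanest route is to choose the \emph{innermost} bigon — take two consecutive intersection points (including the endpoints $z_0,z_1$) along $\sigma_0$ that are also consecutive along $\sigma_1$ and whose connecting sub-arcs bound a region containing no further pieces of $\sigma_0$ or $\sigma_1$ in its interior — so that the boundary is precisely a $2$-geodesic cycle and $P$ is simply connected. Once this region is pinned down, the invocation of the generalized Teichm\"uller formula and the positivity argument are immediate, so the topological extraction of the bigon is really the only place where care is required.
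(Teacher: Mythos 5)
Your proposal is correct and follows essentially the same route as the paper: assume distinct supports, reduce to a bigon bounded by the two arcs, apply Corollary~\ref{ikkitaliktenglik}, and derive the sign contradiction from formula~\eqref{2polygonequality3} using non-negativity of the residues. Your innermost-bigon argument is simply a careful spelling-out of the step the paper compresses into ``without loss of generality we can suppose that $\sigma_0$ and $\sigma_1$ do not intersect except at endpoints,'' with the added correct observation that the vertices of an inner bigon are regular points, so the hypothesis $\operatorname{Res}>-1$ at the vertices is automatic.
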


\begin{proof} 
Assume, by contradiction, that $\mathrm{supp}(\sigma_0)\ne \mathrm{supp}(\sigma_1)$. Without loss of generality we can suppose that $\sigma_0$ and $\sigma_1$ do not intersect except at endpoints.
Let $\{p_1,...,p_g\}$ be the poles of $\nabla$ contained in the simply connected part $P$ bounded by $\sigma_0$ and $\sigma_1$ and let $v_j\in(0,2\pi)$ be the internal angle at $z_j$ for $j=0$, $1$. By Corollary \ref{ikkitaliktenglik} we have
\begin{equation}\label{uniquegatenglik}
  (\operatorname{Res}_{z_0}\nabla+1)v_1+(\operatorname{Res}_{z_1}\nabla+1)v_2=-2\pi\sum_{j=1}^{g}\mathrm{ Res}_{p_j}(\nabla)\;.
\end{equation}
Since $\nabla$ has only non-negative residues, this is impossible.
\end{proof}

\begin{theorem}
  Let $\nabla$ be a Fuchsian meromorphic connection with  non-negative residues on a compact Riemann surface $S$ with genus $g\ge1$. For $i=0$,~$1$, let $\sigma_i\colon[0,1]\to S$ be smooth simple curves such that $\sigma_0(0)=p_0=\sigma_1(0)$ and $\sigma_0(l_0)=p_1=\sigma_1(l_1)$. Assume $\sigma_0\mid_{(0,l_0)}$ and  $\sigma_1\mid_{(0,l_1)}$ are geodesics for $\nabla$. If $\sigma_1$ and $\sigma_2$ are homotopic then $\mathrm{supp}(\sigma_0)= \mathrm{supp}(\sigma_1)$.
\end{theorem}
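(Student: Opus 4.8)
The plan is to reduce to the simply connected case already settled in Corollary~\ref{rhokattanoldandauniqueness} by passing to the universal cover. Since $g\ge1$, the universal covering $\pi\colon\widetilde S\to S$ has a simply connected Riemann surface $\widetilde S$ as total space (namely $\mathbb C$ when $g=1$ and the unit disc when $g\ge2$), and $\pi$ is an unramified holomorphic covering, hence a local biholomorphism. First I would pull back $\nabla$ to $\widetilde\nabla:=\pi^{*}\nabla$. Because $\pi$ is a local biholomorphism, $\widetilde\nabla$ is again a Fuchsian meromorphic connection, its poles are exactly $\pi^{-1}(\Sigma)$, and since the residue is computed in an arbitrary local chart one has $\operatorname{Res}_{\tilde q}\widetilde\nabla=\operatorname{Res}_{\pi(\tilde q)}\nabla\ge0$ for every $\tilde q\in\pi^{-1}(\Sigma)$; thus $\widetilde\nabla$ has non-negative residues as well.

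Next I would lift the two curves. Fix $\tilde p_0\in\pi^{-1}(p_0)$ and let $\widetilde\sigma_0,\widetilde\sigma_1$ be the lifts of $\sigma_0,\sigma_1$ with $\widetilde\sigma_0(0)=\widetilde\sigma_1(0)=\tilde p_0$. Because $\sigma_0$ and $\sigma_1$ are homotopic relative to their endpoints, the homotopy lifting property produces a homotopy between $\widetilde\sigma_0$ and $\widetilde\sigma_1$ that fixes endpoints (the constant endpoint paths lift to constant paths); in particular $\widetilde\sigma_0(l_0)=\widetilde\sigma_1(l_1)=:\tilde p_1$, so the two lifts share both endpoints. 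Three facts are then immediate. Each lift is \emph{simple}: if $\widetilde\sigma_i(s)=\widetilde\sigma_i(t)$, applying $\pi$ gives $\sigma_i(s)=\sigma_i(t)$, whence $s=t$ by simplicity of $\sigma_i$. Each lift is a \emph{geodesic} for $\widetilde\nabla$ on its interior, since the geodesic equation~\eqref{geodesicequation} is local and is transported by the local biholomorphism $\pi$, so $\pi\circ\widetilde\sigma_i=\sigma_i$ being a geodesic for $\nabla$ forces $\widetilde\sigma_i$ to be a geodesic for $\widetilde\nabla$. Finally, each $\mathrm{supp}(\widetilde\sigma_i)$ is compact, being the image of a compact interval.

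Now I would apply the simply connected result. The set $\mathrm{supp}(\widetilde\sigma_0)\cup\mathrm{supp}(\widetilde\sigma_1)$ is a compact subset of the simply connected surface $\widetilde S$, hence it lies in some simply connected domain $D\subset\widetilde S$. On $D$ the connection $\widetilde\nabla$ is Fuchsian with non-negative residues, and $\widetilde\sigma_0,\widetilde\sigma_1$ are simple arcs, geodesic on their interiors, with the common endpoints $\tilde p_0$ and $\tilde p_1$. Corollary~\ref{rhokattanoldandauniqueness} then yields $\mathrm{supp}(\widetilde\sigma_0)=\mathrm{supp}(\widetilde\sigma_1)$, and projecting by $\pi$ gives $\mathrm{supp}(\sigma_0)=\pi(\mathrm{supp}(\widetilde\sigma_0))=\pi(\mathrm{supp}(\widetilde\sigma_1))=\mathrm{supp}(\sigma_1)$, as desired.

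The step I expect to require the most care is the bookkeeping guaranteeing that the lifts genuinely share their terminal endpoint and remain simple: this is precisely where the homotopy hypothesis enters, through the homotopy lifting property (equivalently, through the monodromy action of $\pi_1(S)$ on the fibre over $p_0$), and it is also the only place the assumption $g\ge1$ is used, since for $g\ge1$ the universal cover is a plane or disc on which Corollary~\ref{rhokattanoldandauniqueness} applies. One should also verify that ``homotopic'' is understood relative to endpoints, as is standard for arcs with prescribed endpoints; without this the conclusion can fail.
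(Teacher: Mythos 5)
Your proof is correct and takes essentially the same route as the paper's: pass to the universal cover $\widetilde S$ (which is $\mathbb{C}$ or the disc since $g\ge1$), lift both curves from a common point in $\pi^{-1}(p_0)$, use the monodromy/homotopy lifting theorem to conclude the lifts share their terminal point, observe the lifts are geodesics for the lifted connection, apply Corollary~\ref{rhokattanoldandauniqueness}, and project back. The only differences are cosmetic: the paper applies the corollary directly to the simply connected surface $\widetilde S$ rather than to an auxiliary subdomain $D$ containing the supports, and it leaves implicit the routine verifications (simplicity of the lifts, non-negativity of the residues of the pulled-back connection) that you spell out.
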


\begin{proof}
Let $\pi\colon\tilde{S}\to S$ be the universal covering surface of $S$ and choose a point $z_0$ in $\pi^{-1}(p_0)$. Since the genus of $S$ is greater than $0$, we know that $\tilde{S}$ is either $\mathbb{C}$ or the unit disc. By the monodromy theorem, the lifts $\tilde{\sigma}_0$ and $\tilde{\sigma}_1$ of $\sigma_0$ and $\sigma_1$, respectively,  with initial points $z_0$ have the same terminal point $z_1$ above $p_1$. Moreover, $\tilde{\sigma}_0\mid_{(0,l_0)}$ and  $\tilde{\sigma}_1\mid_{(0,l_1)}$ are geodesics of the lift $\tilde{\nabla}$ of $\nabla$. Therefore the previous corollary implies $\mathrm{supp}(\tilde{\sigma}_0)= \mathrm{supp}(\tilde{\sigma}_1)$ and hence $\mathrm{supp}(\sigma_0)= \mathrm{supp}(\sigma_1)$.
\end{proof}

\section{Poincaré–Bendixson theorems}\label{poincare-bendexsonsection}

In this section we shall study the $\omega$-limit sets of simple geodesics of a meromorphic connection with real periods.

Let $\nabla$  be a meromorphic connection on a Riemann surface $S$ and let  $G$  be its monodromy group.  Fix  a Leray $G$-atlas $\{(U_\alpha,z_\alpha)\}$ on $S^o$ (see Definition~\ref{def:G-atlas}; in particular, the transition functions are of the form $z_\beta=a_{\alpha\beta}z_\alpha+b_{\alpha\beta}$ with $a_{\alpha\beta}\in G$ and $b_{\alpha\beta}\in\mathbb{C}$. Recall that the local representation of $\nabla$ on any chart of the atlas is identically zero. If $\sigma\colon[0,\varepsilon)\to U_\alpha$ is a geodesic for $\nabla$, then
\[
\arg dz_\alpha(\sigma'(t))=\mathrm{const.},
\]
because  
$dz_\alpha(\sigma'(t))$ is a non-zero constant. Set
\[
\mathrm{Arg}_{\sigma'}^\alpha\nabla:= \frac{dz_\alpha(\sigma'(t))}{|dz_\alpha(\sigma'(t))|}\in S^1\;.
\]
This constant depends on $(U_\alpha,z_\alpha)$ and $\sigma$.

Let now $\sigma\colon[0,\varepsilon)\to S$ be any geodesic for $\nabla$. Let  $U_\alpha$ and $U_\beta$ be charts such that  $U_\alpha\cap U_\beta \cap\mathrm{supp}(\sigma)\neq\emptyset$. By the form of the transition functions there exists $a_{\alpha\beta}\in G$ such that
\[
dz_\beta(\sigma'(t))=a_{\alpha\beta} dz_\alpha(\sigma'(t))
\]
for $\sigma(t)\in U_\alpha\cap U_\beta$. Consequently,
\begin{equation}\label{argdatenglik}
  \mathrm{Arg}_{\sigma'}^{\beta}\nabla=\frac{a_{\alpha\beta}}{|a_{\alpha\beta}|}\mathrm{Arg}_{\sigma'}^{\alpha}\nabla.
\end{equation}
Thus changing the chart amounts to multiplying $\mathrm{Arg}_{\sigma'}^{\alpha}\nabla$ by an element of the group
\[
G_1=\left\{\frac{a}{|a|}\biggm|a\in G\right\}.
\]

\begin{definition}
Set 
\[
\mathrm{Arg}_{\sigma'}\nabla:=\left[\mathrm{Arg}_{\sigma'}^{\alpha}\nabla\right]\in S^1/G_1\;;
\]
by the previous comments we see that $\mathrm{Arg}_{\sigma'}\nabla$ is well defined, though in principle it depends on the fixed Leray $\nabla$-atlas.

Given $[\theta]\in S^1/G_1$, if $\mathrm{Arg}_{\sigma'}\nabla=[\theta]$ we shall say that $[\theta]$ is the \emph{direction} of a geodesic $\sigma\colon[0,\varepsilon)\to S$, or that $\sigma$ has  $\theta$\emph{-direction}.
\end{definition}

\begin{remark}
The direction of a given geodesic depends on the chosen atlas. On the other hand, from the definition it follows that for two geodesics having the same direction is a notion independent of the specific Leray $G$-atlas chosen.
\end{remark}

\subsection{Ring domains}\label{ringdomains}
Let us first introduce the notion of ring domain.

\begin{definition}
Let $\nabla$ be a meromorphic connection on a Riemann surface $S$. Set $S^o=S\setminus\Sigma$, where $\Sigma$ is the set of poles of $\nabla$. A connected open set $R\subset S^o$ with $\pi_1(R)=\mathbb{Z}$ is said to be a \emph{ring domain} if there are simple periodic  geodesics $\sigma_a:\mathbb{R}\to S^o$  for any $a\in(0,1)$ such that
    $\sigma_{a_1}$ and $\sigma_{a_2}$ have no common points for $a_1\ne a_2$, their support $\mathrm{supp}(\sigma_a)$ depends continuous on $a$ in the Hausdorff topology, and
\[
R=\bigcup_{a\in(0,1)}\mathrm{supp}(\sigma_a).
\]
We say $\sigma_a$ is a \emph{leaf} of $R$. We denote the foliation given by the $\sigma_a$ by $\mathcal{R}$.

Assume now that $\nabla$ has real periods and let $g$ be a flat metric on $S^o$ adapted to $\nabla$. The $g-$\emph{width} of  the ring domain $R$ is
\[
\sup\{\mathrm{dist}_g^R\bigl(\mathrm{supp}(\sigma_{a_1}),\mathrm{supp}(\sigma_{a_2})\bigr)\bigm| a_1,a_2\in(0,1)\},
\]
where $\mathrm{dist}_g^R$ is the distance induced by $g$ on $R$. 
\end{definition}

\begin{example}
Let $\nabla$ be the meromorphic connection on the unit disc $\mathbb{D}=\{z\in\mathbb{C}\mid |z|<1\}$ with the local representation
$\eta=-\frac{1}{z}dz$.  Then for any $0\le r_1<r_2\le 1$ the domain
\[
R=\{z\in\mathbb{D}\mid r_1<|z|<r_2\}
\] 
is a ring domain for $\nabla$. Indeed, it is not difficult to check that, for $r\in(0,1)$, the curve $\sigma_r\colon\mathbb{R}\to\mathbb{D}$ given by
\[
\sigma_r(t)=re^{it},
\]
is a periodic geodesic for $\nabla$. The singular flat metric $g$ adapted to $\nabla$ is given by 
\[
g^{\frac12}=\frac{|dz|}{|z|}.
\]
Then the $g-$width of $R$ is equal to $\log r_2-\log r_1$. In particular, $R$ has infinite $g$-width when $r_1=0$.

Assume now that $r_1>0$ and set $U:=R\setminus\mathbb{R}^+$. Then the local isometry $J(z)=\log z$ is well defined on~$U$. Moreover, $g^{\frac{1}{2}}=|dJ|$ on $U$. It is not difficult to see that
\[
J(U)=\{w\in \mathbb{C}\mid \log r_1<\operatorname{Re}w<\log r_2,0<\operatorname{Im}w<2\pi\}
\]
 is a rectangle. Moreover, the $J$-image of any geodesic of $\nabla$ in $U$ is Euclidean segment, because $J$ is a local isometry. In particular, we have $J(\sigma_r)=\{\operatorname{Re}w=\log r, 0<\operatorname{Im}w<2\pi\}$.
\end{example}

Now we study the image of a ring domain under local isometries of a meromorphic connection $\nabla$. 

\begin{lemma}\label{euclideanrectangle}
  Let $\nabla$ be a meromorphic connection on a Riemann surface $S$. Assume  $\nabla$ has real periods. Set $S^o:=S\setminus\Sigma$, where $\Sigma$ is the set of poles of $\nabla$.  Let $g$ be a flat metric adapted to $\nabla$ on $S^o$. Let $R$ be a ring domain with $g$-width equal to $r<\infty$. Assume $\partial R=\gamma_1\cup\gamma_2$ with $\gamma_1$ and $\gamma_2$  disjoint connected sets. Assume there exist $p_1\in\gamma_1$ and $p_2\in\gamma_2$ and a simple geodesic $\beta:[0,1]\to \overline{R}$ connecting $p_1$ and $p_2$ with $g-$length equal to $r$. Let $J$ be a local isometry of $g$ on $U:=R\setminus\mathrm{supp}(\beta)$. Then $J(U)$ is a rectangle. In particular all leaves of $R$ have the same $g$-length.
\end{lemma}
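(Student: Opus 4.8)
The plan is to develop the ring domain into the plane through the local isometry $J$ and read off the rectangle from the resulting flat structure.

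First I would record the topology. Since $R$ is a ring domain we have $\pi_1(R)\cong\mathbb{Z}$, so $R$ is an open annulus and the geodesic arc $\beta$ is a cross-cut joining the two boundary components $\gamma_1$ and $\gamma_2$; cutting $\overline R$ along $\mathrm{supp}(\beta)$ produces a disc. Consequently $U=R\setminus\mathrm{supp}(\beta)$ is simply connected, the local isometry $J$ is single valued on $U$, and---normalizing the positive constant ambiguity in $g$ afforded by Theorem~\ref{t8}---we may take $J$ to be a genuine isometry between $g$ and the Euclidean metric. By Proposition~\ref{p:localis}(3) $J$ carries every geodesic arc in $U$ to a Euclidean segment; in particular each leaf $\sigma_a\cap U$ and each of the two sides $\beta^{\pm}$ of the cut develop to segments, while $\partial U$ consists of $\beta^{+}$, $\beta^{-}$ and the two circles $\gamma_1,\gamma_2$ opened along $p_1,p_2$.

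Next I would pin down the holonomy. Re-gluing $\beta^{-}$ to $\beta^{+}$ to recover $R$ corresponds, in the developed picture, to the holonomy of a generator of $\pi_1(R)\cong\mathbb{Z}$, that is, to an orientation preserving Euclidean isometry $w\mapsto aw+b$; since $\nabla$ has real periods its monodromy lies in $S^1$, so $\abs{a}=1$. Each leaf $\sigma_a$ is a closed, indeed periodic by Corollary~\ref{l:closedperiodic}, geodesic representing this generator, so its development is a straight segment whose direction is preserved by the holonomy; a nontrivial rotation fixes no direction, which forces $a=1$. Thus the holonomy is the translation $w\mapsto w+b$, and after using the rotational freedom in $J$ (Proposition~\ref{p:localis}(4)) I may assume $b=L>0$ is real. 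Then each leaf develops to a horizontal segment whose endpoints differ exactly by $b$, so every leaf has $g$-length $L$; this already establishes the final assertion that all leaves have the same $g$-length.

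I would then identify $R$ as a flat cylinder. The leaves foliate $U$ by horizontal segments of length $L$, so at each height $J(U)$ has horizontal extent exactly $L$; hence $\bigcup_{n\in\mathbb{Z}}\bigl(J(U)+nL\bigr)$ is an honest horizontal strip $\{y_0<\operatorname{Im}w<y_1\}$, with $R$ its quotient by $w\mapsto w+L$ and $J|_U$ injective. In this model the $g$-distance between the leaves sitting at heights $s_1$ and $s_2$ is $\abs{s_1-s_2}$, so the $g$-width of $R$ equals the height $H:=y_1-y_0$, whence $r=H$. The arc $\beta$ joins $\gamma_1$ (height $y_0$) to $\gamma_2$ (height $y_1$), so $J(\beta^{-})$ is a straight segment running from one boundary line of the strip to the other; its Euclidean length is at least $H$, with equality if and only if it is vertical. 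Because $\beta$ has $g$-length $r=H$, the segment $J(\beta^{-})$ is vertical of length $H$, and the leaves meeting it are horizontal of length $L$; therefore $J(U)$ is the rectangle $(x_0,x_0+L)\times(y_0,y_1)$, as claimed. The crux is the holonomy computation: once periodicity of the leaves forces the rotational part to be trivial and exhibits $R$ as a flat cylinder, the rest is elementary, the one point deserving care being the equality case of the length estimate for $\beta$, which is exactly what the hypothesis on the $g$-length of $\beta$ supplies---without it $J(U)$ would only be a parallelogram.
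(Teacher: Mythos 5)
Your proof is correct, and it rests on the same underlying mechanism as the paper's---develop the cut annulus $U$ via the local isometry and exploit that the holonomy of the core curve is a Euclidean translation---but you organize it genuinely differently, and in one respect more transparently. The paper never mentions holonomy explicitly: it covers $R$ by three simply connected sets, takes two analytic continuations $J_1,J_2$ of $J$, and simply \emph{asserts} that $J_1|_{U_2}-J_2|_{U_2}$ is constant; this makes the two developed copies $\beta_1,\beta_2$ of $\beta$ parallel segments of length $r$, so $\overline{J(U)}$ is a parallelogram of Euclidean height $r$ with sides of length $r$, hence a rectangle, and equality of the leaf lengths is read off \emph{afterwards} from the horizontal sides. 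You instead isolate and actually prove the assertion the paper leaves implicit: real periods give $\lvert a\rvert=1$ for the holonomy $w\mapsto aw+b$, and periodicity of the leaves (the oriented tangent returns to itself) kills the rotational part, forcing $a=1$. This lets you get equality of leaf lengths \emph{first}, build the strip/cylinder model second, and obtain the rectangle last via the equality case of the estimate $\mathrm{length}(J(\beta))\ge H$, which is exactly the paper's ``parallelogram with side equal to height'' step in another guise; so each approach buys something: the paper avoids any discussion of developing maps and monodromy, while yours makes visible precisely where the hypotheses (real periods, periodic leaves, $\mathrm{length}_g(\beta)=r$) enter. Two details are worth flagging, though neither puts you below the paper's own level of rigor: you should note that $b\ne0$ (a closed geodesic cannot develop to a straight segment with coinciding endpoints), and your claim that $J(U)$ has horizontal extent exactly $L$ at each height tacitly assumes that every leaf crosses $\beta$ exactly once and that the height of the developed leaves is strictly monotone in the leaf parameter. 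Both facts do hold---$\beta$ is transverse to every leaf since two distinct geodesics of a flat metric cannot be tangent, and local injectivity of the leaf parameter along $\beta$ upgrades to global strict monotonicity---but they deserve a sentence; the paper's proof glosses over the very same points when it asserts that the developed images of distinct leaves are disjoint and foliate the image.
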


\begin{proof}
Let $\{U_j\}_{j=1}^3$  be an open cover for $R$ such that:
\begin{itemize}
\item both $U_j$ and $R\setminus U_j$ are simply connected for $j=1$, $2$, $3$;
\item $U_i\cup U_j$ and $U_i\cap U_j$ are simply connected for $i$,  $j=1$, $2$, $3$;
\item $U_1\cap U_2\cap U_3=\emptyset$;
\item $\operatorname{supp}(\beta)\subset U_1\cap U_3$.
\end{itemize}
Let $V_0=(U_1\cup U_2)\setminus\overline{U_3}\subset U$ and $J_0=J|_{V_0}$. Let $J_1$ be the analytic continuation of $J_0$ on $V_1=U_1\cup U_2$ and $J_2$ the analytic continuation of $J_0$ on $V_2=U_2\cup U_3$. Note that for every leaf $\sigma$ of~$R$ we have that $J_j(\sigma\cap V_j)$ is an Euclidean segment in $J_j(V_j)$; moreover, the $J_j$-images of different leaves are disjoint and their union is the simply connected domain $J_j(V_j)$. Moreover, $\Gamma_\sigma:=J_1(\sigma\cap V_1)\cup J_2(\sigma\cap V_2)$ is an Euclidean segment in $D:=J_1(V_1)\cup J_2(V_2)$ and $D=\cup_{\sigma\in\mathcal{R}}\Gamma_\sigma$; in other words, $D$ is foliated by the Euclidean segments $\Gamma_\sigma$ with $\sigma\in\mathcal{R}$. 
Put $\beta_j=J_j(\beta)$ for $j=1$,~$2$. Then $D\setminus(\beta_1\cup\beta_2)$ has three connected components; one is $J(U)$. 

By construction, $\overline{J(U)}=J(U)\cup\beta_1\cup \beta_2$ can be written as union of non-intersecting Euclidean segments starting in~$\beta_1$ and ending in $\beta_2$. Notice that $\beta_1$ and $\beta_2$, being images of a geodesic of length~$r$ via a local isometry, are Euclidean segments of length~$r$. Since $J_1|_{U_2}-J_2|_{U_2}$ is a constant, $\beta_1$ and $\beta_2$ are parallel; hence $\overline{J(U)}$ is a parallelogram. But, by definition of $g$-width of a ring domain, the Euclidean height of $\overline{J(U)}$ is equal to~$r$; since the Euclidean length of $\beta_1$ and $\beta_2$ is also equal to~$r$, it follows that $\overline{J(U)}$ must be a rectangle. Finally, since the $J$-images of the leaves of $R$ are parallel Euclidean segments joining $\beta_1$ and $\beta_2$ foliating~$J(U)$, it follows that all leaves of $R$ have the same $g$-length.
\end{proof}

By definition  a ring domain is a union of simple periodic geodesics. In the next lemma we describe the possible boundaries of ring domains.

\begin{lemma}\label{boundaryring}
Let $\nabla$ be a Fuchsian meromorphic connection on a Riemann surface $S$ with real periods. Set $S^o=S\setminus\Sigma$, where $\Sigma$ is the set of poles of $\nabla$.  Let $R$ be a ring domain relatively compact in $S$. Assume $\partial R$ is not empty. Let $\gamma_1$ be a maximal connected component of $\partial R$. Then $\gamma_1$  is either
\begin{enumerate}
     \item a pole of $\nabla$ with residue $-1$;  or
     \item the support of a periodic 
     geodesic; or
     \item a graph of  saddle connections.
 \end{enumerate}
\end{lemma}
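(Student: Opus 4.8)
The plan is to realise $\gamma_1$ as the Hausdorff limit of a sequence of leaves and to read off its structure from the adapted singular flat metric. By Theorem~\ref{t8} there is a singular flat metric $g$ adapted to $\nabla$, and the leaves $\sigma_a$ are periodic $g$-geodesics. Up to relabelling I would take a sequence converging to $0$ or $1$ and set $\gamma_1=\lim_n\operatorname{supp}(\sigma_{a_n})$ in the Hausdorff topology; since $R$ is relatively compact this limit exists (after passing to a subsequence), is compact, and, being a Hausdorff limit of continua, is connected, so it is a maximal connected component of $\partial R$. The first structural input is that all leaves share a common direction: between any two of them lies a closed flat annulus $A\subset S^o$ containing no singular points and bounded by two periodic geodesics, so the holonomy around its core is trivial and $A$ develops isometrically to a Euclidean cylinder. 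In particular all leaves have the same $g$-length and the same direction $[\theta]$ (this is the mechanism behind Lemma~\ref{euclideanrectangle}).

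Next I would determine the local structure of $\gamma_1$ at a regular point $q\in\gamma_1\cap S^o$. Working in a $\nabla$-chart (or via a local isometry $J$) around $q$, every leaf is a Euclidean segment, and by the previous paragraph all these segments are parallel of direction $\theta$; since $q$ is a limit of points on the $\sigma_{a_n}$, the set $\gamma_1$ near $q$ is the limit of these parallel disjoint segments, hence itself a Euclidean segment of direction $\theta$, i.e.\ a geodesic arc. Thus $\gamma_1\cap S^o$ is a disjoint union of open geodesic arcs. A maximal such arc cannot terminate at a regular point, because there $\gamma_1$ is a single geodesic arc and the arc would continue; hence each endpoint lies in $\Sigma$, so every maximal arc is a simple saddle connection, and distinct arcs meet only at poles. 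Simplicity and mutual disjointness follow from the $\sigma_{a_n}$ being simple and pairwise disjoint.

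With this in hand the trichotomy is immediate. If $\gamma_1\cap\Sigma=\emptyset$ then $\gamma_1\subset S^o$ is a compact connected geodesic $1$-manifold, hence a single closed geodesic with no corners; by Corollary~\ref{l:closedperiodic} it is periodic, which is case~(2). If $\gamma_1\cap\Sigma\neq\emptyset$ but $\gamma_1\not\subset\Sigma$, then $\gamma_1$ is a connected graph whose vertices are the poles in $\gamma_1$ and whose arcs are the saddle connections above, i.e.\ case~(3). The remaining possibility is $\gamma_1\subset\Sigma$; since $\Sigma$ is discrete and $\gamma_1$ is connected, $\gamma_1=\{p\}$ for a single pole $p$, and the leaves $\sigma_{a_n}$ shrink to $p$. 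For $n$ large, $\sigma_{a_n}$ lies in an adapted coordinate disc $U$ around $p$ containing no other pole, so it bounds a Jordan domain $D_n\subset U$; a flat disc bounded by a geodesic would violate Gauss--Bonnet, so $p\in D_n$. Applying Theorem~\ref{fabrizioequality} to $P=\overline{D_n}$ (one free smooth boundary component, so $m_f=1$ and $s=0$, filling of genus $0$, a single interior pole $p$) gives $0=2\pi\bigl(1+\operatorname{Re}\operatorname{Res}_p\nabla\bigr)$, whence $\operatorname{Res}_p\nabla=-1$: this is case~(1).

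The main obstacle is the second paragraph: proving that $\gamma_1$ is locally a single geodesic arc at each regular point and that its arcs can terminate only at poles. This is exactly what forces the dichotomy between a single closed geodesic and a saddle-connection graph, and it relies crucially on all leaves having one common direction. The single-pole rigidity $\operatorname{Res}_p\nabla=-1$ is then a short application of the Teichm\"uller-type formula; alternatively it follows directly from Gauss--Bonnet, since the cone angle at a pole of residue $\rho$ equals $2\pi(\rho+1)$ and a geodesic loop enclosing only $p$ must bound total curvature $2\pi$.
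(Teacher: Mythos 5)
Your proposal is correct and follows essentially the same route as the paper's proof: a local analysis in $\nabla$-charts (via local isometries) showing that near a regular point $\gamma_1$ is a Euclidean segment parallel to the leaves, so each component of $\gamma_1\cap S^o$ is a geodesic, with Corollary~\ref{l:closedperiodic} upgrading closed to periodic in the pole-free case, a saddle-connection graph otherwise, and the Teichm\"uller-type formula with $s=0$, $m_f=1$, $g_{\widetilde{P}}=0$ and one enclosed pole forcing $\operatorname{Res}_p\nabla=-1$ in the single-point case. The extra justifications you supply --- the flat-annulus/holonomy argument for the common direction of the leaves, and Gauss--Bonnet to place the pole inside the Jordan domain bounded by a nearby leaf --- are steps the paper merely asserts, so they strengthen but do not change the argument.
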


\begin{proof}
Assume $\gamma_1=\{p\}$ is a single point. Then every neighbourhood of $p$  contains a periodic geodesic. In particular, there exists a simply connected part containing $p$ and no poles other than $p$ and whose boundary is a periodic geodesic. Recalling that, by assumption, $\nabla$ has real periods (and hence real residues), we can then apply \eqref{genTeich} with $s=0$, $m_f=1$, $g_{\widetilde{P}}=0$ and $g=1$ to deduce that 
$p$ must be a pole with residue $-1$.

Assume now that $\gamma_1$ is not a single point; then it must contain a regular point $p\in\gamma_1\cap S^0$. Let $(U,z)$ be a $\nabla-$chart centered at $p$. Then the leaves of  $R$ in $z(U\cap R)$ are parallel Euclidean segments. Hence $z(U\cap \gamma_1)$ is a Euclidean segment. Consequently, $\gamma_1$ is locally the support of a geodesic. Since $\gamma_1$ is compact, each component of  $\gamma_1\cap S^o$ is a geodesic.
If $\gamma_1$ contains no poles then it is the support of a closed or periodic geodesic; 
since $\nabla$ has real periods, Lemma \ref{l:closedperiodic} implies that $\gamma_1$ is the support of a periodic geodesic. 
Finally,
if $\gamma_1$ contains poles then it is a graph of saddle connections.
\end{proof}

 In the next lemma  we study the possible boundary of \textit{maximal} ring domains, i.e., ring domains $R$ which are not a proper subset of another ring domain.
 
\begin{lemma}\label{partialring}
 Let $\nabla$ be a meromorphic connection on a Riemann surface $S$ with real periods; assume that the residues are all greater than $-1$. Set $S^o=S\setminus\Sigma$, where $\Sigma$ is the set of poles of $\nabla$. Assume there exists a simple periodic geodesic $\sigma\colon [0,\varepsilon)\to S^o$. Then there exists a ring domain $\tilde{R}$  containing $\sigma$ as a leaf.  
  
Furthermore, if the maximal ring domain $R$ containing $\sigma$ as a leaf has nonempty boundary then its boundary is a boundary graph of saddle connections.
Moreover, any saddle connection in $\partial R$ has the same direction as $\sigma$.
\end{lemma}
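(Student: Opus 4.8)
The plan is to establish the three assertions in turn, relying on the developing-map picture furnished by local isometries and on the classification of boundary components in Lemma~\ref{boundaryring}.

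To produce the ring domain $\tilde R$, I would first choose an annular tubular neighbourhood $N\subset S^o$ of $\mathrm{supp}(\sigma)$ containing no poles; this is possible because $\mathrm{supp}(\sigma)$ is compact, $\Sigma$ is discrete and $\sigma$ is simple, so that $\pi_1(N)=\mathbb Z$ is generated by $[\sigma]$ and $\nabla|_N$ is holomorphic with real periods. Passing to the universal cover $\widetilde N$ and fixing a local isometry $J\colon\widetilde N\to\mathbb C$ of $\nabla$, the generating deck transformation $T$ is conjugated by $J$ to an affine map $w\mapsto aw+c$ with $a=\rho([\sigma])\in S^1$ and $c\in\mathbb C$. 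By Proposition~\ref{p:localis}(3) the lift of $\sigma$ develops to a straight line $\ell$ that must be $T$-invariant; since $|a|=1$ and $\sigma'$ returns to a positive multiple of itself, this forces $a=1$ and $c$ parallel to $\ell$. Thus $T$ is a translation, every line parallel to $\ell$ is $T$-invariant and, by Corollary~\ref{l:closedperiodic}, projects to a simple periodic geodesic; these geodesics foliate a ring domain $\tilde R$ having $\sigma$ as a leaf.

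For the second assertion I would let $R$ be the maximal ring domain containing $\sigma$, assume $\partial R\neq\emptyset$, and analyse a connected component $\gamma_1$ of $\partial R$ by means of Lemma~\ref{boundaryring}. Since all residues are $>-1$, case~(1) of that lemma is excluded, so $\gamma_1$ is either the support of a periodic geodesic or a graph of saddle connections. The heart of the argument is to rule out the first case using maximality: if $\gamma_1$ were a periodic geodesic, the existence argument applied to $\gamma_1$ would give a two-sided ring-domain collar of $\gamma_1$, and since the leaves of $R$ accumulating $\gamma_1$ and the leaves of this collar are all simple periodic geodesics in the free homotopy class of $\gamma_1$, a common developing chart presents them as mutually parallel Euclidean segments of the direction of $\gamma_1$; hence $R$, $\gamma_1$ and the outer side of the collar would glue into a ring domain strictly larger than $R$, a contradiction. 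Consequently every component of $\partial R$ is a graph of saddle connections, and since $\partial R$ bounds the connected open set $R$ it is a boundary graph of saddle connections.

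Finally, for the direction statement I would take a saddle connection $\delta\subset\partial R$ and a regular interior point $q\in\delta$ with a $\nabla$-chart $(U,z)$ centred at $q$. By the continuity requirement in the definition of ring domain, $\mathrm{supp}(\sigma_a)$ tends to $\partial R$ in the Hausdorff topology as $a$ approaches the relevant endpoint of $(0,1)$, so the leaves accumulate $\delta$; in $(U,z)$ they are represented by parallel Euclidean segments of direction $\mathrm{Arg}_{\sigma'}\nabla$, and the geodesic limit $\delta$ is then represented by a Euclidean segment of the same direction, whence $\mathrm{Arg}_{\delta'}\nabla=\mathrm{Arg}_{\sigma'}\nabla$. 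The step I expect to be most delicate is the gluing in the maximality argument: one must verify that the two foliations, that of $R$ near $\gamma_1$ and that of the collar of $\gamma_1$, genuinely match up, namely that their leaves are pairwise disjoint, share the same direction and depend continuously on the parameter, so that their union is a bona fide ring domain; establishing the triviality $\rho([\sigma])=1$ of the rotational monodromy is the key computation that makes this compatibility possible.
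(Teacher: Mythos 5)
Your proof is correct, and its second and third parts take essentially the paper's route: the paper likewise feeds the dichotomy of Lemma~\ref{boundaryring} into maximality (a single-point component of $\partial R$ would be a pole of residue $-1$, excluded by hypothesis; a pole-free component would be a periodic geodesic, allowing $R$ to be enlarged by the collar construction), and it proves the direction statement exactly as you do, by representing the leaves and the boundary in a $\nabla$-chart at a regular point and reading off parallelism of the Euclidean segments. Where you genuinely diverge is the existence of the ring domain. The paper argues metrically: it takes the singular flat metric $g$ adapted to $\nabla$ (Theorem~\ref{t8}), considers tubular $g$-neighbourhoods $U_r(\sigma)$, and shows via a local isometry $J$ on a small $g$-ball that each boundary curve of $U_r(\sigma)$, being Euclidean-equidistant from the segment $J(\mathrm{supp}(\sigma))$, is itself a Euclidean segment, hence a closed geodesic, hence periodic by Corollary~\ref{l:closedperiodic}; the union over $0<r\le r_0$ is the ring domain. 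You instead trivialize the holonomy: on the universal cover of an annular neighbourhood the deck transformation develops to $w\mapsto aw+c$ with $a=\rho([\sigma])\in S^1$, and invariance of the developed line $\ell$ together with $\sigma'(l)=\sigma'(0)$ forces $a=1$ and $c$ parallel to $\ell$, so the foliation by parallel lines descends to closed, hence periodic, leaves. Your version isolates precisely where real periods enter (trivial linear holonomy along $\sigma$) and avoids constructing $g$, while the paper's version buys the quantitative $g$-distance control ($g$-width, equidistant leaves) that it reuses later in Lemma~\ref{euclideanrectangle}, Proposition~\ref{periodicyoqekan} and Lemma~\ref{pbl2}. Two small points you should make explicit, though they are at the same level of detail as the paper's own write-up: first, $c\neq0$, which follows because the developed lift of $\sigma$ moves strictly monotonically along $\ell$, so $J\circ T=J$ is impossible; second, that leaves near $\sigma$ really close up and remain embedded, which is a consequence of your computation, since the first-return holonomy on a transversal develops to the restriction of the translation $w\mapsto w+c$ and is therefore the identity in the transverse coordinate, so a sufficiently thin invariant strip embeds in the quotient annulus.
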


\begin{proof}
Let $g$  be the singular flat metric adapted to $\nabla$.  Choose $r_0$ small enough such that for all $0<r\le r_0$ we have:
\begin{itemize}
\item for any  $q\in \mathrm{supp}(\sigma)$ the $g$-ball $B_{g}(q,3r):=\{p\in S\mid\mathrm{dist}_g(q,p)<3r\}$ is simply connected; and 
\item the boundary of the tubular neighbourhood 
\[
U_{r}(\sigma):=\{p\in S\mid \mathrm{dist}_g(p,\mathrm{supp}(\sigma) )<r\}
\]
of radius $r$ of the curve $\sigma$ is composed by two different smooth curves $\sigma_i\colon [0,1]\to S$ for $i=1$,~$2$, so that
$\mathrm{dist}_g(\mathrm{supp}(\sigma_1),\mathrm{supp}(\sigma_2))=2r$.
\end{itemize}

Pick a point $p_1\in\mathrm{supp}(\sigma)$. Take a chart $(B_g(p_1,2r),z)$ with $r\le r_0$.
Let $J$ be a local isometry of  $\nabla$ on $B_g(p_1,2r)$.  Then $J(\mathrm{supp}(\sigma)\cap B_g(p_1,2r))$ is a Euclidean segment. On the other hand, the Euclidean distance from any point of $J(\mathrm{supp}(\sigma_1)\cap B_g(p_1,2r))$ to $J(\mathrm{supp}(\sigma)\cap B_g(p_1,2r))$ is equal to $r$. Hence  $J(\mathrm{supp}(\sigma_1)\cap B_g(p_1,2r))$ is a Euclidean segment. Consequently, $\mathrm{supp}(\sigma_1)\cap B_g(p_1,2r)$ is the support of a geodesic. Hence $\mathrm{supp}(\sigma_1)$ is the support of a closed or periodic geodesic. Since $\nabla$ has real periods and real residues, Lemma \ref{l:closedperiodic} implies that $\mathrm{supp}(\sigma_1)$ is the support of a periodic geodesic. 
Hence for any $0<r\le r_0$ we have periodic geodesics $\sigma_{r}^1$ and $\sigma_{r}^2$.  Then
\[
R=\sigma\cup \bigcup_{j=1}^2\bigcup_{0<r<r_0} \sigma_{r}^j
\]
is a ring domain containing $\sigma$ as a leaf.

Let  now $\tilde R$ be  the maximal ring domain containing $\sigma$  as a leaf. Let $\gamma_1$ be a connected component of $\partial R$. Assume $\gamma_1$ is a single point. Then, by Lemma \ref{boundaryring}, it must be a pole with residue $-1$, impossible because we assumed that all residues should be larger than $-1$.

 Assume now that $\gamma_1$ contains no poles. Then, by the previous Lemma, $\gamma_1$ is the support of a periodic 
 geodesic; but in this case, using the previous construction, we might enlarge $\tilde R$, again the maximality of $\tilde R$.  Consequently, $\partial\tilde R$ is a graph of saddle connections.

Finally, assume $\sigma$ has direction $[\theta]$ with respect to a $\nabla$-atlas. Since the image of any leaf $\sigma_a$ of $\tilde R$ under a local isometry is locally parallel to the image of $\sigma$, it follows that $\sigma_a$ has the same direction as $\sigma$. Let $p\in \partial R$ be a regular point for $\nabla$. Let $(U,z)$ be a $\nabla-$chart centered at $p$. Again, since the leaves of  $R$ in $z(U\cap R)$ are parallel Euclidean segments, it follows that $z(U\cap \partial R)$ is parallel to these Euclidean segments. Hence if $\gamma$ is a saddle connection with $\mathrm{supp}(\gamma)\subset\partial R$ then $\gamma$ must have the same direction as $\sigma$.
\end{proof}

\begin{corollary}\label{horizontalsaddlecon}
Let $\nabla$ be a meromorphic connection on a compact Riemann surface $S$ with real periods; assume that all the residues are greater than $-1$. Set $S^o=S\setminus\Sigma$, where $\Sigma$ is the set of poles of $\nabla$. Assume there exists a simple periodic geodesic $\sigma\colon [0,\varepsilon)\to S^o$. If $\nabla$ has at least one pole then there exists a saddle connection having the same direction as $\sigma$.
\end{corollary}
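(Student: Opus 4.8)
The plan is to read this off almost immediately from Lemma~\ref{partialring}, the only genuine work being to guarantee that the maximal ring domain attached to $\sigma$ has nonempty boundary.

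First I would apply Lemma~\ref{partialring} to $\sigma$: since $\sigma$ is a simple periodic geodesic and, by hypothesis, every residue of $\nabla$ exceeds $-1$, there is a ring domain containing $\sigma$ as a leaf, and therefore a maximal such ring domain $R$. Because $S$ is compact, $R$ is automatically relatively compact, so the hypotheses behind Lemmas~\ref{boundaryring} and~\ref{partialring} are in force.

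The crux of the argument is to show $\partial R\neq\emptyset$. Here I would use that $R$ is a nonempty open subset of the connected surface $S$: were $\partial R$ (taken in $S$) empty, then $R$ would be simultaneously open and closed, forcing $R=S$. But $R\subseteq S^o=S\setminus\Sigma$, and the hypothesis that $\nabla$ has at least one pole gives $\Sigma\neq\emptyset$, so $R\subsetneq S$; this contradiction yields $\partial R\neq\emptyset$. This step, though short, is exactly where compactness and connectedness of $S$ together with the existence of a pole enter, and it is the only place where any real content appears.

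Finally, with $\partial R\neq\emptyset$, the ``furthermore'' part of Lemma~\ref{partialring} says that $\partial R$ is a boundary graph of saddle connections and that every saddle connection contained in $\partial R$ shares the direction of $\sigma$. To extract an actual saddle connection I would note that no connected component of $\partial R$ can reduce to a single point: by Lemma~\ref{boundaryring} such a component would be a pole of residue $-1$, which is excluded since all residues are $>-1$. Hence each component is a genuine graph carrying at least one arc, and any such arc is a saddle connection having the same direction as $\sigma$, as required.
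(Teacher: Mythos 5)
Your proposal is correct and follows essentially the same route as the paper: invoke Lemma~\ref{partialring} to obtain the maximal ring domain $R$ containing $\sigma$ as a leaf, observe that $\Sigma\neq\emptyset$ forces $\partial R\neq\emptyset$, and conclude that $\partial R$ contains a saddle connection with the same direction as $\sigma$. You merely make explicit two steps the paper leaves implicit --- the open-and-closed argument showing $\partial R\neq\emptyset$ and the exclusion, via Lemma~\ref{boundaryring}, of single-point boundary components --- both of which are accurate.
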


\begin{proof}
  Let $R$ be the maximal ring domain containing $\sigma$ as a leaf. Since $\Sigma$ is not empty, $\partial R$ cannot be empty. Then, by previous lemma, $\partial R$ contains a  saddle connection with the same direction as $\sigma$.
\end{proof}

Now we study maximal ring domains with empty boundary.

\begin{lemma}
 Let $\nabla$ be a meromorphic connection on a compact Riemann surface $S$ with real periods; assume that all residues are greater than $-1$. Set $S^o=S\setminus\Sigma$, where $\Sigma$ is the set of poles of $\nabla$. Assume there exists a simple periodic geodesic $\sigma\colon [0,\varepsilon)\to S^o$. Let $R$ be the maximal ring domain containing $\sigma$ as a leaf. Then $\partial R$ is empty if and only if $S$ is a torus and $\nabla$ is a holomorphic connection with real periods.
\end{lemma}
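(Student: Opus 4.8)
The plan is to prove the two implications separately, the engine being the elementary topological fact that a nonempty proper open subset of a connected space cannot be clopen, combined with Lemma~\ref{partialring} and the residue count $\sum_{p}\operatorname{Res}_p\nabla=2g-2$ for a meromorphic connection on a compact Riemann surface of genus~$g$.

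For the forward implication I would assume $\partial R=\emptyset$. Since $R$ is open, its topological boundary is $\overline R\setminus R$, so $\partial R=\emptyset$ forces $\overline R=R$; thus $R$ is clopen, and as $S$ is connected and $R\neq\emptyset$ (it contains the leaf $\sigma$), this gives $R=S$. Because $R\subseteq S^o=S\setminus\Sigma$, we conclude $\Sigma=\emptyset$, i.e.\ $\nabla$ has no poles and is therefore a holomorphic connection; it has real periods by hypothesis. Finally, since the sum of the residues of $\nabla$ equals $2g-2$ and this sum is empty (there being no poles), we obtain $2g-2=0$, hence $g=1$ and $S$ is a torus.

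For the converse I would assume $S$ is a torus and $\nabla$ is holomorphic with real periods, so that $\Sigma=\emptyset$ and $S^o=S$. The hypotheses of Lemma~\ref{partialring} are met: $\nabla$ has real periods, all its residues are (vacuously) greater than $-1$, and a simple periodic geodesic $\sigma$ exists. Hence if the maximal ring domain $R$ containing $\sigma$ had nonempty boundary, that boundary would be a boundary graph of saddle connections. But a saddle connection joins two poles, and there are no poles, so no nonempty graph of saddle connections can exist. Therefore $\partial R=\emptyset$, as required. (Alternatively, one can argue directly: by Theorem~\ref{t8} the adapted metric $g$ is a genuine nonsingular flat metric on the compact torus, the geodesics of $\nabla$ are straight lines, and the family of periodic geodesics parallel to $\sigma$ foliates all of $S$.)

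The only delicate point is the forward step $R=S$: one has to read the maximality of $R$ as permitting the foliation by parallel periodic geodesics to sweep out the entire surface, so that the leaf space closes up into a circle rather than remaining an interval. This degenerate behaviour is exactly what occurs on the flat torus with no poles; on any surface of higher genus, or whenever poles are present, Lemma~\ref{partialring} guarantees the foliation must terminate on a boundary graph of saddle connections and hence $\partial R\neq\emptyset$. I expect no computational obstacle here — the entire content is the interplay between the clopen argument, the residue formula giving $g=1$, and the pole-dependence of saddle connections in Lemma~\ref{partialring}.
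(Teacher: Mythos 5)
Your proof is correct and follows essentially the same route as the paper's: the forward direction uses the clopen argument to get $R=S$, hence $\Sigma=\emptyset$ and $\nabla$ holomorphic, while the converse invokes Lemma~\ref{partialring} and the absence of poles to rule out a boundary graph of saddle connections. Your explicit appeal to the residue formula $\sum_p\operatorname{Res}_p\nabla=2g-2$ to conclude $g=1$, and your remark that maximality must be read as the foliation sweeping out all of $S$ with the leaf space closing into a circle, supply two details the paper's very terse proof leaves implicit.
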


\begin{proof}
 If $\partial R$ is empty then $R=S$. Hence $\nabla$ is a holomorphic connection and $S$ is torus.
 
On the other hand assume $S$ is torus and $\nabla$ is holomorphic. Assume, by contradiction, that $\partial R$ is not empty. Then, by Lemma~\ref{partialring}, $\partial R$ would contain at least one pole. But this is impossible, because $\nabla$ is a holomorphic connection.
\end{proof}

\subsection{Periodic geodesics}
In this subsection we show that the $\omega$-limit set of a simple non-periodic geodesic of a meromorphic connection with real periods cannot be a periodic geodesic. 

\begin{lemma}\label{finitinters}
    Let $\nabla$ be a Fuchsian meromorphic connection on a Riemann surface $S$ with real periods. Set $S^o:=S\setminus\Sigma$, where $\Sigma$ is the set of poles of $\nabla$. Let $\sigma\colon [0,\varepsilon)\to S^o$ be a simple not closed geodesic for~$\nabla$ such that the $\omega$-limit set $W$ of $\sigma$ is a closed (or periodic) geodesic or a boundary graph of saddle connections. Then $\sigma$ does not intersect $W$.
\end{lemma}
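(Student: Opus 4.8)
The plan is to argue by contradiction: suppose $\sigma$ meets $W$ at a point $q=\sigma(t_0)$. Since $W$ is an $\omega$-limit set, $q\in W$ means $\sigma$ returns arbitrarily close to $q$ along a sequence $t_n\uparrow\varepsilon$; the idea is to use this recurrence together with the geometric rigidity of $W$ (a single periodic geodesic or a boundary graph of saddle connections, all of whose arcs are geodesics) to force a self-intersection of $\sigma$, contradicting that $\sigma$ is simple.

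First I would work in a local isometry chart. Because $\nabla$ has real periods, Theorem~\ref{t8} gives an adapted singular flat metric $g$, and Proposition~\ref{p:localis} tells us that in a $\nabla$-chart every geodesic is a Euclidean segment. The key structural fact I want to exploit is that $W$ is a geodesic (or a union of geodesic arcs) with a well-defined direction, and that near a regular point of $W$ the picture is that of parallel Euclidean segments. So I would pick $q$ to be a regular point of $W$ lying on $\sigma$ (if $q$ happened to be a pole one argues separately using the local normal form of Section~\ref{localbehaviorfuchsianpolessec}, but since $\operatorname{supp}(\sigma)\subset S^o$ and poles are limits rather than points of $\sigma$, the relevant $q$ is regular). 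In a small $\nabla$-chart $(U,z)$ centered at $q$, the arc of $W$ through $q$ maps to a straight segment $\ell$, and $\sigma$ passes through $q$ as a straight segment as well.

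The crux is a direction-matching step. I would show that $\sigma$ must cross $\ell$ transversally (if $\sigma$ were tangent to $\ell$ at $q$, then by uniqueness of geodesics with given initial data $\sigma$ would coincide with the arc of $W$, which is either periodic or a saddle connection — in the periodic case $\sigma$ would be periodic, excluded by hypothesis, and in the saddle-connection case $\sigma$ would run into a pole and stop, contradicting that $q$ is an accumulated regular point). Granting transversality at $q$, I use the return: for large $n$, $\sigma(t_n)$ lies in $U$ very close to $q$, and since $W$ locally looks like $\sigma$ accumulating on $\ell$ from one side, the returning pieces of $\sigma$ are nearly-parallel Euclidean segments lying on a definite side of $\ell$. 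But the original passage of $\sigma$ through $q$ crosses $\ell$ transversally and therefore enters \emph{both} sides of $\ell$ in $U$. Two transverse straight segments in $z(U)$ through nearby points must intersect inside $U$, producing a genuine self-intersection of $\sigma$ and contradicting simplicity.

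The \textbf{main obstacle} I expect is making the ``returns are parallel to $W$'' step rigorous when $W$ is a boundary graph of saddle connections rather than a single smooth geodesic: near a vertex pole the direction of $W$ is locally governed by the critical-geodesic picture of Corollary~\ref{noncriticalvacritical} and Proposition~\ref{orasi2pitaqsim}, so one must ensure $q$ is chosen in the interior of a saddle connection, away from vertices, where the local isometry $J$ straightens everything simultaneously. A clean way to organize this is to invoke Lemma~\ref{partialring}-style reasoning: the returning arcs of $\sigma$ accumulate on the geodesic $\ell\subset W$, hence (being geodesics whose $J$-images are segments accumulating on the segment $J(\ell)$) they are \emph{parallel} to $\ell$ in the chart, while the transverse passage is not — this is precisely the contradiction. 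I would therefore structure the proof so that the transversality dichotomy at $q$ and the parallelism of the accumulating returns are each reduced to the elementary Euclidean statement that a non-parallel segment and a family of parallel segments converging to $J(\ell)$ must eventually meet, which is where the simplicity of $\sigma$ is violated.
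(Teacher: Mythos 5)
Your proposal is correct and follows essentially the same route as the paper's proof: pick the intersection point $p_0\in W\cap\operatorname{supp}(\sigma)$ (automatically regular), rule out tangency via uniqueness of geodesics (tangency would force $\operatorname{supp}(\sigma)\subseteq W$, impossible since $\sigma$ is neither closed nor a saddle connection tending to a pole), then straighten $W$ to a Euclidean segment in a $\nabla$-chart and observe that the arcs of $\sigma$ accumulating that segment must cross the transversal passage through $p_0$, contradicting simplicity. The paper even dispenses with your explicit parallelism step, deducing the forced intersection directly from the accumulation of the straightened arcs onto $z(\gamma)$.
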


\begin{proof}
Our goal is to show that $W\cap \mathrm{supp}(\sigma)=\emptyset$. Assume, by contradiction, that $W\cap \mathrm{supp}(\sigma)$ is not empty and take $p_0\in W\cap \mathrm{supp}(\sigma)$. Notice that $p_0$ is not a pole. Furthermore, $\sigma$ and $W$ are transversal at~$p_0$, because otherwise we would have $\operatorname{supp}(\sigma)\subseteq W$ and this is impossible, because $\sigma$ is not closed nor is a saddle connection. 

Let $(U,z)$ be a $\nabla$-chart at $p_0$ such that $\gamma:=W\cap U$ is connected.  Notice that $z(\gamma)$ is a Euclidean segment. For $k\ge 1$, let $\sigma_k\colon (\varepsilon_{k-1},\varepsilon_{k})\to S^o$ be the geodesics of $\nabla$, with $0\le\varepsilon_0<\varepsilon_k<\varepsilon_{k+1}<\varepsilon$,  such that $\mathrm{supp}(\sigma_k)$ is a connected component of $ \mathrm{supp}(\sigma)\cap U$. Since $\gamma$ is in the $\omega$-limit set of $\sigma$,  the supports of $\sigma_k$ must accumulate $\gamma$. Let $\sigma_{k_0}$ be the segment of $\sigma$ intersecting transversally $\gamma$ in $p_0$. Since $z(\sigma_k)$ accumulates to $z(\gamma)$, there exists $k_1>k_0$ such that $z(\mathrm{supp}(\sigma_{k_1}))\cap z(\mathrm{supp}(\sigma_{k_0}))\ne \emptyset$. Since $\sigma$ is simple, we have a contradiction.
\end{proof}

\begin{proposition}\label{periodicyoqekan}
Let $\nabla$ be a meromorphic connection on a Riemann surface $S$ with real periods.  Set $S^o:=S\setminus\Sigma$, where $\Sigma$ is the set of poles of $\nabla$.  Let $\sigma\colon [0,\varepsilon)\to S^o$ be a simple non-periodic geodesic for $\nabla$. Then the  $\omega$-limit set of $\sigma$ cannot be the support of a periodic geodesic.
\end{proposition}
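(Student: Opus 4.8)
The plan is to argue by contradiction: suppose the $\omega$-limit set $W$ of $\sigma$ is the support of a periodic geodesic $\gamma$. The first step is to record that $\sigma$ never meets $\gamma$. Indeed, by Corollary~\ref{l:closedperiodic} closed geodesics are periodic, so the non-periodic geodesic $\sigma$ is in particular not closed; hence Lemma~\ref{finitinters} applies and gives $\mathrm{supp}(\sigma)\cap W=\emptyset$. Thus $\sigma$ accumulates $\gamma$ without ever touching it, and the whole point will be to show that, because $\nabla$ has real periods, there is no room for a geodesic to spiral toward $\gamma$: any geodesic entering a thin collar of $\gamma$ must either cross $\gamma$ or coincide with a parallel periodic leaf, both of which are impossible.

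The key structural input is the triviality of the holonomy around $\gamma$. Since $\gamma\subset S^o$ is compact, I would fix an annular neighbourhood $A\subset S^o$ of $\gamma$. The monodromy of $\nabla$ along the core of $A$ is $a:=\rho([\gamma])\in S^1$ (the image of $\rho$ lies in $S^1$ by the real-periods hypothesis; cf.\ Remark~\ref{perres}). Developing $\gamma$ by local isometries and using that $\gamma$ is \emph{periodic} — so its tangent vector returns exactly to itself after one turn — the linear part $a$ must fix a nonzero vector, forcing $a=1$. Hence the holonomy along the core is a pure translation $w\mapsto w+c$, the local isometries glue to an isometry of $A$ onto a collar of the core circle of the flat cylinder $\mathbb{C}/c\mathbb{Z}$ carrying geodesics to straight lines and $\gamma$ to the core, and after rotating coordinates I may assume $c>0$, so that $\gamma$ and all parallel leaves are horizontal. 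On $A$ every geodesic of $\nabla$ is then a straight segment of a \emph{single} well-defined slope (well defined, not merely modulo $G_1$, precisely because the holonomy on $A$ is trivial). This is the analogue of the rectangle picture of Lemma~\ref{euclideanrectangle}, and it is the step where real periods does all the work: with complex periods the holonomy would be a genuine rotation–scaling, a geodesic could spiral logarithmically into a closed geodesic, and this is exactly the scenario the proposition rules out.

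With this flat-cylinder picture I would split on the slope $\theta$ of $\sigma$ in $A$. If $\theta$ is horizontal, then an arc of $\sigma$ lies on a line $\{y=h\}$, which is a closed — hence, by Corollary~\ref{l:closedperiodic}, periodic — geodesic; by uniqueness of the geodesic through a point with a given velocity, $\sigma$ coincides with this periodic geodesic and is itself periodic, against the hypothesis (and if $h=0$ it would meet $\gamma$, against the first step). If $\theta$ is not horizontal, the height $y\circ\sigma$ is an affine function of arclength with nonzero slope $\sin\theta$, hence strictly monotone with no interior extremum. Consequently any maximal arc of $\sigma$ inside $A$ enters and exits only through the two boundary circles $\{y=\pm\delta\}$, and being monotone in $y$ it runs from one to the other, necessarily passing through $\{y=0\}=\gamma$. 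Since $W=\mathrm{supp}(\gamma)$, the curve $\sigma$ accumulates the whole core circle and therefore possesses such arcs in $A$; each of them meets $\gamma$, contradicting $\mathrm{supp}(\sigma)\cap W=\emptyset$. I would also dispose of the degenerate possibility that $\sigma$ eventually stays inside $A$: then $y\circ\sigma$ affine and bounded forces either an exit (excluded) or finite total length, whence $\sigma(t)$ converges to a single point and $W$ reduces to a point rather than the circle $\gamma$ — again a contradiction. Either branch closes the argument.

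I expect the genuine obstacle to be the second step — establishing that the holonomy along $\gamma$ is trivial and that, as a result, an entire collar of $\gamma$ develops \emph{faithfully} onto a flat cylinder on which geodesics are honest Euclidean straight lines. Once this ``no-spiralling'' rigidity coming from real periods is in place, everything else is elementary plane geometry. A secondary point requiring care is the crossing argument in the non-horizontal case: I would phrase it through the affine behaviour of $y\circ\sigma$ and the monotonicity it forces, rather than attempting to control a global first-return map, since $\sigma$ may leave and re-enter $A$ between consecutive near-approaches to $\gamma$, and the monotonicity argument handles every arc uniformly without tracking the global return dynamics.
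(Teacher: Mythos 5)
Your proof is correct, but it takes a genuinely different route from the paper's. The paper shares your first step ($\operatorname{supp}(\sigma)\cap\operatorname{supp}(\gamma)=\emptyset$ via Lemma~\ref{finitinters}) but then works on the side of $\gamma$ containing $\sigma$: Lemma~\ref{partialring} produces a ring domain foliated by periodic leaves with $\gamma$ in its boundary (this is where real periods enters, through the adapted singular flat metric and Corollary~\ref{l:closedperiodic}); a geodesic transversal $\beta$ realizing the $g$-width is cut out, Lemma~\ref{euclideanrectangle} shows a local isometry maps the slit ring domain onto a Euclidean rectangle, and the successive arcs $\sigma_j$ of $\sigma$ between crossings of $\beta$ develop to mutually parallel segments which accumulate the side corresponding to $\gamma$, hence are parallel to it, forcing $\overline{\operatorname{supp}(\sigma_j)}$ to be a periodic leaf and $\sigma$ periodic --- contradiction. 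You instead extract the flat cylinder directly from the holonomy along $\gamma$: periodicity ($\gamma'(l)=\gamma'(0)$) forces the linear part of the holonomy to fix the nonzero velocity vector, hence to equal $1$, so a two-sided collar develops onto $\mathbb{C}/c\mathbb{Z}$, and your per-arc dichotomy (horizontal arc $\Rightarrow\sigma$ periodic; non-horizontal arc $\Rightarrow$ the affine, strictly monotone height forces a crossing of $\gamma$; tail trapped in the collar $\Rightarrow$ the $\omega$-limit set is a single point) finishes the argument. This is more elementary --- no adapted metric, no ring domain, no width-realizing transversal or return analysis --- and it actually exposes that the ``no-spiralling'' rigidity comes from periodicity of $\gamma$ alone rather than from real periods, contrary to what you suggest: $a\,v_0=v_0$ forces $a=1$ for \emph{any} $a\in\mathbb{C}^*$, so real periods is used only to know $\sigma$ is not closed (Corollary~\ref{l:closedperiodic}, feeding Lemma~\ref{finitinters}); this trivial-holonomy observation is precisely what the paper re-derives on $\mathbb{P}^1(\mathbb{C})$ via the residue theorem in the corollary following Proposition~\ref{periodicyoqekan}. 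What the paper's heavier machinery buys is reusability: the ring-domain/rectangle argument transfers verbatim to Proposition~\ref{boundarygraphyoqekan}, where the limit set is a boundary graph of saddle connections and your holonomy shortcut has no direct analogue. Two small points of care in your write-up: the slope of $\sigma$ in the collar is well defined only arc by arc (different re-entries may carry different slopes, since directions are preserved only modulo $G_1$ globally), but your argument is genuinely per-arc, so this looseness is harmless; and full injectivity of the development is more than you need --- the holonomy-invariant harmonic height $y$ together with $\{y=0\}=\gamma$ in a thin enough collar suffices for the crossing argument (you also implicitly use that $\gamma$ is embedded so that an annular collar exists, which follows from $\sigma$ being simple --- the same implicit step as in the paper's proof).
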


\begin{proof}
  Assume the $\omega$-limit set of  $\sigma$ is the support of a periodic geodesic $\gamma_1\colon \mathbb{R}\to S^o$.   Thanks to Lemma \ref{finitinters}, $\sigma$ does not intersect $\gamma_1$. Let $g$ be a flat metric adapted to $\nabla$ on $S^o$. Let $R\subset S$ be a ring domain with finite $g$-width containing $\gamma_1$ as leaf, given by Lemma~\ref{partialring}. Since the $\omega$-limit set of  $\sigma$ is $\mathrm{supp}(\gamma_1)$ there exists $\varepsilon_0\in[0,\varepsilon)$ such that $\sigma(t)\in R$ for all $t\in[\varepsilon_0,\varepsilon)$. Let $R_1\subset R$ be a ring domain such that $\sigma(t)\in R_1$ for all $t\in[\varepsilon_0,\varepsilon)$ and $\partial R_1\supset\mathrm{supp}(\gamma_1).$ More precisely, we can choose as $R_1$ the connected component of $R\setminus\gamma_1$ eventually containing $\sigma$.
  Let $r$ be the $g$-width of $R_1$. Let
\[
\partial R_1=\mathrm{supp}(\gamma_1)\cup\mathrm{supp}(\gamma_2(r))
\]
for some periodic geodesic $\gamma_2(r)$. Fix $p_1\in \gamma_1$. Choose $p_2:=p_2(r)\in \gamma_2(r)$  such that $\text{dist}_g(p_1,p_2)=r$. We choose $r$ small enough so that there exists a geodesic $\beta:[0,1]\to \overline{R}_1$ so that $\beta(0)=p_1$ and $\beta(1)=p_2$  and $\mathrm{length}_g(\beta)=r$. Let $(U,z)$ be a chart with $U:=R_1\setminus \mathrm{supp}(\beta)$. Let $J$ be a local isometry of $\nabla$ on $U$.  By Lemma \ref{euclideanrectangle}, $J(U)$ is a Euclidean rectangle. We shall call ``vertical" the two sides transversal to the image via~$J$ of $\gamma_1$.

Since $\sigma$ accumulates $\gamma_1$, it must intersect $\beta$ infinitely many times. Moreover, the intersections are transversal, because $\beta$ is a geodesic different from~$\sigma$, and they accumulate only at~$p_1$; in particular, they are countable. Therefore we can find a strictly increasing sequence $\{\varepsilon_j\}\subset[\varepsilon_0,\varepsilon)$
such that 
\[
\mathrm{supp}(\sigma)\cap\mathrm{supp}(\beta)=\{\sigma(\varepsilon_j)\mid j\in\mathbb{N}\}.
\] 
Put $\sigma_j:=\sigma\mid_{(\varepsilon_j,\varepsilon_{j+1})}$. Then $\sigma_j$ is a maximal geodesic arc in $U$ tending to $\mathrm{supp}(\beta)$ both in backward and in forward time. Since $\sigma_j$ is a geodesic segment, $J(\mathrm{supp}(\sigma_j))$ is an Euclidean segment. Since $J(U)$ is a rectangle, the Euclidean segment $J(\mathrm{supp}(\sigma_j))$ intersects with the same angle the two vertical sides of $J(U)$. Since $\sigma_{j+1}$ is the continuation of $\sigma_j$ we see that $J(\mathrm{supp}(\sigma_j))$ and $J(\mathrm{supp}(\sigma_{j+1}))$ must intersect the vertical sides of $J(U)$ at the same angle. Hence  $J(\mathrm{supp}(\sigma_j))$ and $J(\mathrm{supp}(\sigma_{j+1}))$ are parallel. 

By assumption, we know that $\{J(\mathrm{supp}(\sigma_j) )\}_{j=1}^{\infty}$ accumulates to $l_{\gamma_1}\subset \partial J(U)$, a side of the rectangle $J(U)$. Hence $l_{\gamma_1}$ is parallel to  $J(\mathrm{supp}(\sigma_j))$, because otherwise there would exist $j_0$ such that $J(\mathrm{supp}(\sigma_{j_0}))$ intersect $l_{\gamma_1}$. Since $U\subset R_1$, any maximal Euclidean segment $l_1\subset J(U)$ which is parallel to $l_{\gamma_1}$ is the image of the support of a periodic geodesic; hence  $\overline{\mathrm{supp}(\sigma_j)}$ is the support of a periodic geodesic. Since $\sigma$ is not a periodic geodesic and $\sigma_j$ is a part of it we have a contradiction. Hence, the $\omega$-limit set of $\sigma$ cannot be a periodic geodesic.
\end{proof}

We explicitly remark that this proposition holds for any meromorphic connection $\nabla$ with real periods, without any limitation of the value of the residues.

\begin{corollary}
  Let $\nabla$ be a meromorphic connection on $\mathbb{P}^1(\mathbb{C})$. Set $S^o:=\mathbb{P}^1(\mathbb{C})\setminus\Sigma$, where $\Sigma$ 
  is the set of poles of $\nabla$. Let $\sigma\colon[0,\varepsilon)\to S^o$ be a simple non-periodic geodesic for $\nabla$. Then the  $\omega$-limit set of $\sigma$ cannot be the support of a periodic geodesic.
\end{corollary}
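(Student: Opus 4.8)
The plan is to deduce this from Proposition~\ref{periodicyoqekan}. That proposition requires $\nabla$ to have real periods, a hypothesis not assumed here; the whole point is therefore to show that, in the presence of a periodic geodesic as $\omega$-limit set, $\nabla$ has real periods on a suitable neighbourhood of that geodesic, so that the proposition can be applied there.

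So I would suppose, for a contradiction, that the $\omega$-limit set $W$ of $\sigma$ is the support of a periodic geodesic $\gamma\colon[0,l]\to S^o$, which we may take to be simple. First I would show that the monodromy of $\nabla$ around $\gamma$ is trivial, i.e.\ $\rho([\gamma])=1$. Since $\gamma$ is a geodesic, its velocity $\gamma'$ is $\nabla$-parallel along $\gamma$, so parallel transport along the loop $\gamma$ carries $\gamma'(0)$ to $\gamma'(l)$. Because $\gamma$ is periodic we have $\gamma'(l)=\gamma'(0)$, and as the holonomy of the flat holomorphic connection $\nabla|_{S^o}$ around a loop is multiplication by a scalar on the one-dimensional fibre, this forces that scalar to be $1$. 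As the holonomy of $\nabla|_{S^o}$ coincides with its monodromy representation $\rho$ (cf.\ Remark~\ref{perres}), we conclude $\rho([\gamma])=1$.

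Next I would pass to a neighbourhood. Since $\mathrm{supp}(\gamma)\subset S^o$ is compact and the pole set $\Sigma$ is discrete, there is a tubular neighbourhood $A$ of $\mathrm{supp}(\gamma)$, an annulus, containing no pole of $\nabla$. Then $\nabla|_A$ is holomorphic, and $\pi_1(A)=\mathbb{Z}$ is generated by $[\gamma]$; by the previous step its monodromy group is $\langle\rho([\gamma])\rangle=\{1\}\subset S^1$, so $\nabla|_A$ has real periods. Since $W=\mathrm{supp}(\gamma)\subset A$, there is $\varepsilon_0\in[0,\varepsilon)$ with $\sigma\bigl([\varepsilon_0,\varepsilon)\bigr)\subset A$; the restriction $\sigma|_{[\varepsilon_0,\varepsilon)}$ is a simple geodesic of $\nabla|_A$ which is not periodic (otherwise $\sigma$ itself would be periodic) and whose $\omega$-limit set, computed in $A$, is still $\mathrm{supp}(\gamma)$. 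Applying Proposition~\ref{periodicyoqekan} to $\nabla|_A$ on the Riemann surface $A$ then yields the desired contradiction.

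The delicate points I expect to be the crux are the identification of the holonomy around the periodic geodesic with the monodromy $\rho$ and the verification that $\rho([\gamma])=1$, rather than merely a positive real number, which is all that a closed geodesic would force; this is exactly where periodicity, as opposed to mere closedness, is used. The remaining bookkeeping—that the annulus can be chosen pole-free, that the restricted geodesic remains simple, non-periodic, and retains $\mathrm{supp}(\gamma)$ as $\omega$-limit set, and that Proposition~\ref{periodicyoqekan} is legitimately applicable to the non-compact surface $A$—is routine.
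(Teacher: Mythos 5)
Your proof is correct, and its overall reduction coincides with the paper's: argue by contradiction, pass to a pole-free annular neighbourhood $A$ of the limit periodic geodesic $\gamma$, show that $\nabla|_A$ has real periods (indeed trivial monodromy), and invoke Proposition~\ref{periodicyoqekan}, which is stated for arbitrary, not necessarily compact, Riemann surfaces. Where you genuinely diverge is in the key step, the triviality of $\rho([\gamma])$. The paper obtains it extrinsically: by \cite[Corollary 4.5]{AT1} a simple periodic geodesic in $\mathbb{P}^1(\mathbb{C})$ surrounds poles whose residues sum to $-1$; placing $\gamma$ and its interior in $\mathbb{C}$, extending the local representation $\eta$ meromorphically to the disc bounded by $\gamma$, and applying the residue theorem then gives $\rho(\gamma)=\exp\bigl(\int_\gamma\eta\bigr)=e^{-2\pi i}=1$. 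You obtain it intrinsically: the geodesic equation $\nabla_{\gamma'}\gamma'=0$ says $\gamma'$ is a parallel section along $\gamma$, so the holonomy of the loop fixes the nonzero vector $\gamma'(0)$ by periodicity, and since the holonomy of a flat connection on the line bundle $TS$ is scalar and agrees (up to inversion of conventions, which is harmless for triviality) with the monodromy $\rho$, one gets $\rho([\gamma])=1$. Your remark that mere closedness would only force a positive real holonomy scalar is exactly right and is consistent with the paper's subsequent corollary on closed non-periodic limit geodesics, where $\rho(\gamma)=\exp(-2\pi\tau)$ with $\tau=\sum\operatorname{Im}\operatorname{Res}_{p_j}\nabla\ne0$. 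Your route is more elementary (no appeal to \cite[Corollary 4.5]{AT1}) and makes no use of the planarity of $\mathbb{P}^1(\mathbb{C})$, so it proves the statement verbatim on any compact Riemann surface, compactness entering only to guarantee that $\sigma$ is eventually trapped in $A$; what the paper's residue-theoretic route buys instead is the quantitative information on residue sums, which it immediately reuses in the next corollary.

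One small point you should make explicit rather than assume: you write that the limit geodesic ``may be taken to be simple,'' but this requires the one-line justification the paper supplies — $\gamma$ is simple \emph{because} $\sigma$ is, since a transversal self-intersection of $\gamma$ would force nearby arcs of the accumulating geodesic $\sigma$ to cross each other. Simplicity is not a harmless normalization here: it is exactly what makes the tubular neighbourhood $A$ an annulus with $\pi_1(A)$ generated by $[\gamma]$, on which your monodromy computation lives.
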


\begin{proof}
  Assume the $\omega$-limit set of  $\sigma$ is the support of a periodic geodesic $\gamma_1:\mathbb{R}\to S^o$. Note that since $\sigma$ is simple $\gamma_1$ cannot intersect itself.   Hence $\gamma_1$ is a simple periodic geodesic
  and, by \cite[Corollary 4.5]{AT1}, it surrounds poles $p_1$,~$p_2,\ldots, p_g$ with
  \begin{equation}\label{yig`indi-1}
    \sum_{j=1}^{g}\operatorname{Res}_{p_j}\nabla=-1.
  \end{equation}
 Without loss of generality we can assume that $\gamma_1$ and its interior are contained in~$\mathbb{C}$.
  
Let $U:=U_r(\gamma_1)\subset\mathbb{C}$ be a tubular $r$-neighbourhood (with respect to the Euclidean metric in $\mathbb{C}$) of $\gamma_1$ such that $U\cap\Sigma=\emptyset$. Then $U$ is a doubly connected domain eventually containing~$\sigma$. By Proposition~\ref{periodicyoqekan}, it is enough to show that $\nabla$ has real periods in $U$. 

The fundamental group of $U$ is generated by~$\gamma_1$; moreover, \cite[Proposition 3.6]{AT1} implies that the monodromy representation is given by
\[
\rho(\gamma_1)=\exp\left(\int_{\gamma_1}\eta\right)\;,
\]
where $\eta$ is the local representation of~$\nabla$ on~$U$. Now, let $D$ be the union of~$U$ with the interior of~$\gamma_1$, so that $D$ is a simply connected domain in~$\mathbb{C}$ containing $p_1,\ldots, p_g$. The form $\eta$ then extends to a meromorphic form on~$D$, still denoted by~$\eta$, with poles $p_1,\ldots,p_g$. Since $\gamma_1$ is a simple closed curve surrounding $p_1,\ldots,p_g$ we have
\[
\frac{1}{2\pi i}\int_{\gamma_1}\eta=\sum_{j=1}^g \operatorname{Res}_{p_j}\nabla\;;
\]
therefore recalling \eqref{yig`indi-1} we get $\rho(\gamma_1)=\exp(-2\pi i)=1$, that is, the monodromy group of~$\nabla$ on~$U$ is trivial and hence $\nabla$ has real periods on~$U$, as required.
\end{proof}

\begin{corollary}
   Let $\nabla$ be a meromorphic connection on $\mathbb{P}^1(\mathbb{C})$. Set $S^o:=\mathbb{P}^1(\mathbb{C})\setminus\Sigma$, where $\Sigma$ 
   is the set of poles of $\nabla$. Let $\sigma\colon[0,\varepsilon)\to S^o$ be a simple non-periodic geodesic for $\nabla$. If the  $\omega$-limit set of $\sigma$ is the support of a closed geodesic $\gamma$ then $\gamma$ surrounds poles $p_1,p_2,...,p_g$ with
  $$
    \sum_{j=1}^{g}\mathrm{Re\,Res}_{p_j}\nabla=-1
  $$
  and
  $$
    \sum_{j=1}^{g}\mathrm{Im\,Res}_{p_j}\nabla\ne0.
  $$
\end{corollary}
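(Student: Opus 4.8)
The plan is to prove the two relations separately: the real-part identity comes straight out of the Abate--Bianchi--Tovena formula, while the non-vanishing of the imaginary part is obtained by contradiction, reducing to the periodic case already excluded in the previous corollary.

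First I would note that, since $\sigma$ is simple and $\gamma$ is its $\omega$-limit set, $\gamma$ is a \emph{simple} closed geodesic (were $\gamma$ self-intersecting, $\sigma$ would be forced to self-intersect as well); being closed, $\gamma$ is smooth with no corners, so its external angles all vanish. After a Möbius change of coordinates, as in the previous corollary, I may assume that $\gamma$ and the disc $P$ it bounds containing the surrounded poles $p_1,\dots,p_g$ are contained in $\mathbb{C}$. Applying Theorem \ref{fabrizioequality} to $P$, whose boundary is a single smooth free component ($m_f=1$, no vertices, so the left-hand sum of external angles is $0$) and whose filling $\widetilde P$ is a sphere ($g_{\widetilde P}=0$), I obtain
\[
0=2\pi\Bigl(2-1-0+\sum_{j=1}^{g}\mathrm{Re\,Res}_{p_j}\nabla\Bigr),
\]
that is $\sum_{j=1}^{g}\mathrm{Re\,Res}_{p_j}\nabla=-1$.

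For the second relation I would assume, by contradiction, that $\sum_{j=1}^{g}\mathrm{Im\,Res}_{p_j}\nabla=0$. Combined with the first identity this gives $\sum_{j=1}^{g}\operatorname{Res}_{p_j}\nabla=-1\in\mathbb{Z}$. Mimicking the computation in the previous corollary, I would take a tubular neighbourhood $U=U_r(\gamma)$ with $U\cap\Sigma=\emptyset$, extend the local representation $\eta$ to the simply connected domain $D$ obtained by filling in the interior of $\gamma$ (so that the poles of $\eta$ in $D$ are exactly $p_1,\dots,p_g$), and apply the residue theorem to get $\tfrac{1}{2\pi i}\int_\gamma\eta=\sum_{j=1}^{g}\operatorname{Res}_{p_j}\nabla=-1$. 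By \cite[Proposition~3.6]{AT1} the monodromy is then $\rho(\gamma)=\exp\bigl(\int_\gamma\eta\bigr)=\exp(-2\pi i)=1$, so $\nabla$ has trivial monodromy, and in particular real periods, on $U$.

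Finally, since $\nabla|_U$ has real periods and $\gamma\subset U$ is a closed geodesic, Corollary \ref{l:closedperiodic} implies that $\gamma$ is in fact \emph{periodic}. Then the $\omega$-limit set of the simple non-periodic geodesic $\sigma$ would be the support of a periodic geodesic, contradicting the previous corollary; this contradiction yields $\sum_{j=1}^{g}\mathrm{Im\,Res}_{p_j}\nabla\ne0$. The delicate point of the argument is precisely this last reduction: real periods are available only on the neighbourhood $U$ and not globally, so the passage from ``closed'' to ``periodic'' must be carried out locally through Corollary \ref{l:closedperiodic} applied to $\nabla|_U$, after which the situation is exactly the one ruled out earlier (here Lemma \ref{finitinters}, guaranteeing that $\sigma$ does not meet $\gamma$, confirms that $\sigma$ eventually lies on one side of $\gamma$ inside $U$).
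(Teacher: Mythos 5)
Your proof is correct, but it takes a more self-contained route than the paper, whose entire proof is a citation: the previous corollary together with \cite[Corollary 4.5]{AT1}, which already packages both facts you establish — that a simple closed geodesic in $\mathbb{P}^1(\mathbb{C})$ surrounds poles whose real residues sum to $-1$, and that such a geodesic is periodic if and only if the imaginary parts of the residues sum to $0$. What you do differently is to reprove these ingredients with tools internal to this paper: the real-part identity via Theorem \ref{fabrizioequality} applied to the disc bounded by $\gamma$ (a legitimate application, since a closed geodesic is a regular $1$-cycle with a single vertex of external angle $0$, so $m_f=1$, $g_{\widetilde P}=0$, and the angle sum vanishes), and the one implication of the periodicity criterion you actually need — $\sum_j\operatorname{Im}\operatorname{Res}_{p_j}\nabla=0$ forces $\gamma$ periodic — by rerunning the monodromy computation from the paper's proof of the previous corollary (residue theorem on the filled domain $D$ plus \cite[Proposition 3.6]{AT1} give trivial monodromy on the annulus $U$, hence real periods there) and then invoking Corollary \ref{l:closedperiodic} for the pole-free, hence vacuously Fuchsian, restriction $\nabla|_U$ to upgrade ``closed'' to ``periodic''. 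Your route buys independence from the precise statement of \cite[Corollary 4.5]{AT1} at the cost of length; the paper buys brevity by outsourcing the closed-geodesic residue criterion. One small remark: your closing appeal to Lemma \ref{finitinters} is superfluous, since you use the previous corollary as a black box and nothing in your argument depends on the relative position of $\sigma$ and $\gamma$; moreover, as stated that lemma assumes a Fuchsian connection with real periods, so strictly it would have to be applied to $\nabla|_U$ rather than to $\nabla$ on $\mathbb{P}^1(\mathbb{C})$.
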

\begin{proof}
  It follows by the previous corollary and \cite[Corollary 4.5]{AT1}.
\end{proof}

There are examples of simple non-periodic geodesics in $\mathbb{P}^1(\mathbb{C})$ whose $\omega$-limit set is a closed non-periodic geodesic; see \cite{AT1}.

\subsection{Boundary graph of saddle connections}

By Theorem \ref{t3}, another possible $\omega$-limit set of a simple maximal geodesic of a meromorphic connection $\nabla$ on a compact Riemann surface $S$ is a boundary graph of saddle connections. In this section we shall prove that this cannot happen for a Fuchsian meromorphic connection with real periods.

As we have seen in Lemma \ref{partialring}, any simple periodic geodesic is a leaf of a ring domain. Now we prove a similar property for a boundary graph of saddle connections.

\begin{lemma}\label{pbl2} 
Let $\nabla$ be a Fuchsian meromorphic connection on a Riemann surface $S$ with real periods. Set $S^o=S\setminus\Sigma$, where  $\Sigma$ is the set of poles of $\nabla$. Let $\Gamma\subset S$ be a boundary graph of saddle connections, and choose a connected open set $V$ such that $\Gamma$ is a connected component of~$\partial V$. Assume $\Gamma$ is the $\omega$-limit set of a simple geodesic $\sigma\colon [0,\varepsilon)\to S^o$; then there exists a ring domain $R\subset V$ such that $ \partial R=\Gamma\cup \gamma$, where $\gamma$ is the support of a periodic geodesic.
\end{lemma}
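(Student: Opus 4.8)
The plan is to thicken $\Gamma$ into a ring domain by producing, for every small $\tau>0$, a periodic geodesic lying at constant $g$-distance $\tau$ from $\Gamma$ on the $V$-side; the union of these equidistant curves will be the required ring domain $R$, with $\Gamma$ and the outermost such curve $\gamma$ as boundary.

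First I would pin down the side and the local model. By Lemma~\ref{finitinters} the geodesic $\sigma$ does not meet $\Gamma$, and by the last assertion of Theorem~\ref{t3} its support lies in the single component of $S\setminus\Gamma$ having $\Gamma$ in its boundary; taking $V$ to be that component, $\sigma$ accumulates $\Gamma$ from the $V$-side. Let $g$ be the singular flat metric adapted to $\nabla$ (Theorem~\ref{t8}). Applying Corollary~\ref{boundaryburchak} (equivalently, the vertex analysis of Proposition~\ref{orasi2pitaqsim}) to $\Gamma$, I record that the vertices of $\Gamma$ are Fuchsian poles $p_1,\dots,p_k$ with residues $\rho_j=\operatorname{Res}_{p_j}\nabla\ge-\tfrac12$, and that the interior angle of $V$ at $p_j$ equals $v_j=\frac{\pi}{\rho_j+1}$, so that $(\rho_j+1)v_j=\pi$. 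This is the one place the hypothesis on $\sigma$ actually enters.

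Next I would define, for small $\tau>0$, the equidistant curve $\gamma_\tau=\{q\in V\mid \mathrm{dist}_g(q,\Gamma)=\tau\}$ and show it is a simple closed geodesic. Along each open saddle connection $\Gamma$ is a geodesic, and an equidistant curve to a geodesic in a flat metric is again a geodesic (a straight segment parallel to it in a $\nabla$-chart), so $\gamma_\tau$ is geodesic there. The essential point is the behaviour at a vertex $p_j$: in a chart adapted to $(\nabla,p_j)$ the local isometry $(\chi^\alpha_{\rho_j})^{-1}$ of Proposition~\ref{prop2} carries the $V$-sector at $p_j$, of angle $v_j=\frac{\pi}{\rho_j+1}$, onto the half-disc $\tilde{H}_{\rho_j}=\{0<\arg w<\pi,\ 0<|w|<r^{\rho_j+1}\}$ of angle $\pi$; under this map the two saddle connections meeting at $p_j$ straighten to a single Euclidean segment and $\gamma_\tau$ corresponds to a horizontal segment $\{\operatorname{Im} w=\mathrm{const}\}$, hence to a noncritical geodesic. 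Thus $\gamma_\tau$ crosses the sector as a geodesic, at constant $g$-distance from $\Gamma$ and without corner, and since $\rho_j\ge-\tfrac12$ the map $\chi^\alpha_{\rho_j}$ is injective, so $\gamma_\tau$ stays simple near $p_j$. Therefore $\gamma_\tau$ is a simple closed geodesic, and since $\nabla$ has real periods, Corollary~\ref{l:closedperiodic} shows it is periodic. To assemble $R$, I would then observe that for $\tau_0$ small the curves $\{\gamma_\tau\}_{0<\tau\le\tau_0}$ are pairwise disjoint, vary continuously in the Hausdorff topology, and sweep out the annular set $R=\{q\in V\mid 0<\mathrm{dist}_g(q,\Gamma)<\tau_0\}$; reparametrising $\tau\in(0,\tau_0)$ by $a\in(0,1)$ exhibits $R$ as a ring domain with leaves $\gamma_\tau$, with $\Gamma$ as the $\tau\to0$ boundary and $\gamma:=\gamma_{\tau_0}$ as the periodic leaf, so that $R\subset V$ and $\partial R=\Gamma\cup\gamma$.

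The hard part will be the vertex analysis: verifying that the equidistant curves pass through the poles as honest geodesics with no corner and no self-crossing. This rests entirely on the identity $(\rho_j+1)v_j=\pi$ from Corollary~\ref{boundaryburchak}, which is precisely what unrolls the $V$-side of each cone to a flat half-plane via $\chi^\alpha_{\rho_j}$, together with $\rho_j\ge-\tfrac12$ for injectivity. One must also check that for $\tau$ small the function $\mathrm{dist}_g(\cdot,\Gamma)$ has connected regular level sets on the $V$-side, so that each $\gamma_\tau$ is a single embedded circle; this follows from the smoothness of the nearest-point projection to $\Gamma$ away from the poles, handled near the poles by the half-plane model above.
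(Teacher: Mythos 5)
Your proposal is correct and takes essentially the same route as the paper's proof: you build the equidistant curves $\gamma_\tau=\{q\in V\mid \mathrm{dist}_g(q,\Gamma)=\tau\}$ of the adapted singular flat metric, show they are geodesic along the open saddle connections via $\nabla$-charts and across each vertex $p_j$ by straightening the $V$-sector of angle $\frac{\pi}{\rho_j+1}$ (from Corollary~\ref{boundaryburchak}, with $\rho_j\ge-\frac12$ guaranteeing injectivity) into a half-disc by the power-map local isometry, exactly as the paper does with $J(z_j)=z_j^{\rho_j+1}$ on the sectors $C_j^k$ of Proposition~\ref{orasi2pitaqsim}, and then conclude periodicity from real periods via Corollary~\ref{l:closedperiodic} and assemble $R$ as the foliated collar $\bigcup_{0<\tau<\tau_0}\gamma_\tau$. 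The only differences are presentational (you treat a single sector per vertex where the paper handles all sectors $C_j^k$ and covers the regular part of $\Gamma$ by balls $B_g(q_k,2r)$ as in Lemma~\ref{partialring}), so the substance matches.
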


\begin{proof}  Since $\nabla$ has real periods there  exists a singular flat metric $g$ on $S$ adapted to $\nabla$. Let $W\subset S$ be a connected open set containing $\Gamma$ such that 
\[
W\cap \Sigma=\Gamma\cap\Sigma=:\Sigma_1.
\]
By Corollary \ref{boundaryburchak}, for any $p\in \Sigma_1$ we have $\operatorname{Res}_{p}\nabla\ge-\frac12$. Hence, by Lemma \ref{2rrho+1}, $\Gamma$ has finite $g$-length.

For $p_j\in\Sigma_1$, let $(U_j,z_j)$ be a chart adapted to $(\nabla,p_j)$ such that $U_j\subset W$. Let $r_j$ be the $g$-radius of~$U_j$, i.e., the $g$-length of a critical geodesic in $U_j$ (by Lemma \ref{lengthcriticalg} all critical geodesics in $U_j$ have the same $g-$length). Choose $0<r_0<\frac16 \min r_j$ such that for any point $q\in\Gamma$ the set
\[
B_{g}(q,3r_0):=\{p\in S\mid\mathrm{dist}_g(p,q)<3r_0\}
\]
is a simply connected subset of $W$ such that $\Gamma\cap B_{g}(q,3r_0)$ is connected; such an $r_0$ exists because  $\Gamma$ is compact and locally connected. For $0<r\le r_0$ set
\[
U_{r}(\Gamma):=\{p\in V\cap W\mid \mathrm{dist}_g(p,\Gamma)<r\}.
\]
Then $\partial U_{r}(\Gamma)$  has two connected components. One is  $\Gamma$; let $\gamma(r)$ be the other one. To prove the statement it is enough to prove that $U_r(\Gamma)$ is a ring domain and that $\gamma(r)$ is the support of a periodic geodesic. 

By construction, for any $z_r\in\gamma(r)$ we have
\[
\mathrm{dist}_g(\Gamma,\gamma(r))=\mathrm{dist}_g(\Gamma,z_r)=r.
\]
Note that, by Proposition \ref{orasi2pitaqsim}, $z(U_j\cap V)=\cup _{k=1}^m C_j^k$, where $C_j^k$ is a sector with interior angle $\frac{\pi}{\rho_j+1}\le2\pi$. 
  We claim that $\gamma_j^k:=\gamma_j^k(r):=\gamma(r)\cap z^{-1}(C_j^k)$ is the support of a geodesic.  Since $(U_j,z_j)$ is a chart adapted to~$(\nabla,p_j)$ the local representation of $\nabla$ on $U_j$ is $\eta_j=\frac{\rho_j}{z_j}dz_j$. Then the map $J(z_j)=z_j^{\rho_j+1}$ is a local isometry of~$\nabla$ on $C_j^k$. Moreover,
\[
J(C_j^k)=\{w\in\mathbb{C}\mid\alpha_j^k<\arg w<\pi+\alpha_j^k,|w|<\tilde{r}_j\}
\] 
is a half disc for some $\alpha_j^k\in[0,\pi]$ and $\tilde{r}_j>0$. Put $\Gamma_j^k=\partial C_j^k\cap z(\Gamma\cap U_j)$. By construction, $J$ continuously extends to $\overline{C}_j^k$ and   $J(\Gamma_j^k)$ is the diameter of the half disc.   Since $J$ is a local isometry of $\nabla$ on $C_j^k$ and  any point of $\gamma_j^k$ has the same $g-$distance from $\Gamma_j^k$, we can see that $J(z_j(\gamma_j^k))$ is a Euclidean segment parallel to $J(z_j(\Gamma_j^k))$. Hence $\gamma_j^k$ is the support of a geodesic.

Since $r<\frac16 \min r_j$, we can find $q_k\in \Gamma$ for $k=1,\ldots,n$ such that
\begin{enumerate}
  \item there are no poles of $\nabla$ in $B_{g}(q_k,2r)$, i.e., $B_{g}(q_k,2r)\cap\Sigma_1=\emptyset$;
  \item $\gamma$ is contained in
\[
\bigcup_{k=1}^n B_{g}(q_k,2r)\cup\bigcup_j U_j.
\]
\end{enumerate}
As in the proof of Lemma \ref{partialring} we can show that $\gamma(r)\cap B_{g}(q_k,2r)$ is the support of a geodesic. Consequently, $\gamma(r)$ is the support of a closed geodesic. Since $\nabla$ has real periods $\gamma$ is the support of a periodic geodesic. 
Since this holds for all $0<r<_0$ and $U_{r_0}(\Gamma)$ is foliated by $\gamma(r)$ we have found a ring domain $R$ with $g-$width equal to $r$ such that $ \partial R=\Gamma\cup \gamma$ where $\gamma$ is the support of a periodic geodesic.
\end{proof}
 \begin{remark}\label{R:sigmainR}
   In Lemma \ref{pbl2}, we can also assume that the support of $\sigma$ is eventually contained in~$R\subseteq V$. Indeed, $\sigma$ cannot intersect $\Gamma$ by Lemma \ref{finitinters} (see also \cite[Proposition 4.1]{AB}). Therefore, $\operatorname{supp}(\sigma)$ is a subset of a connected component $C$ of $S\setminus \Gamma$ and then it suffices to take $V$ contained in $C$.
 \end{remark}

By using Lemma \ref{pbl2}, and the same technique used in the proof of Proposition \ref{periodicyoqekan} we get the following result.

\begin{proposition}\label{boundarygraphyoqekan}
Let $\nabla$ be a Fuchsian meromorphic connection on a Riemann surface $S$ with real periods. Set $S^o:=S\setminus\Sigma$, where $\Sigma$ is the set of poles of $\nabla$. Let $\sigma\colon[0,\varepsilon)\to S^o$ be a simple non-periodic geodesic for $\nabla$. Then the  $\omega$-limit set of $\sigma$ cannot be a boundary graph of saddle connections.
\end{proposition}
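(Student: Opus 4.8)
The plan is to run exactly the argument already used for periodic $\omega$-limit sets in Proposition~\ref{periodicyoqekan}, now with the boundary graph $\Gamma$ playing the role of the periodic geodesic $\gamma_1$. Suppose for contradiction that the $\omega$-limit set of $\sigma$ is a boundary graph of saddle connections $\Gamma$, a connected component of $\partial V$ for some connected open set $V$. By Lemma~\ref{finitinters} the support of $\sigma$ does not meet $\Gamma$, so by Remark~\ref{R:sigmainR} I may assume $\sigma$ is eventually contained in the ring domain $R\subset V$ produced by Lemma~\ref{pbl2}, whose boundary is $\partial R=\Gamma\cup\gamma$ with $\gamma$ the support of a periodic geodesic and whose $g$-width $r$ is finite.

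Next I would set up the rectangle model. Choose a regular point $p_1\in\Gamma$ lying in the interior of a saddle-connection arc, and a point $p_2\in\gamma$ with $\mathrm{dist}_g(p_1,p_2)=r$; shrinking $r$ if necessary, take a simple geodesic $\beta\colon[0,1]\to\overline R$ joining them with $g$-length $r$. On $U:=R\setminus\mathrm{supp}(\beta)$ pick a local isometry $J$; by Lemma~\ref{euclideanrectangle} the image $J(U)$ is a Euclidean rectangle whose two ``vertical'' sides $\beta_1,\beta_2$ are the two copies of $J(\beta)$ and which is foliated by the $J$-images of the leaves of $R$ as parallel horizontal segments. Two points deserve attention here. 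First, the two vertical sides are genuinely parallel: since every leaf of $R$ is a periodic geodesic the monodromy of $\nabla$ around the core of $R$ preserves tangent directions, so the gluing $\beta_1\leftrightarrow\beta_2$ is a translation. Second, the ``horizontal'' side $l_\Gamma$ adjacent to $\Gamma$ is a straight segment even though $\Gamma$ has corners at its poles: near a pole $q_j\in\Gamma$ with $\rho_j:=\operatorname{Res}_{q_j}\nabla>-1$ the relevant local isometry is $z\mapsto z^{\rho_j+1}$, which unfolds the internal angle $\frac{\pi}{\rho_j+1}$ of $\Gamma$ at $q_j$ (computed in Corollary~\ref{boundaryburchak} and Proposition~\ref{orasi2pitaqsim}) into the straight angle $\pi$.

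With the rectangle in place I would reach the contradiction exactly as in Proposition~\ref{periodicyoqekan}. Because $\sigma$ accumulates $\Gamma$ it must cross $\beta$ transversally infinitely often, with crossing points accumulating only at $p_1$; let $\sigma_j$ be the arc of $\sigma$ between the $j$-th and $(j+1)$-st crossings. Each $J(\mathrm{supp}(\sigma_j))$ is a Euclidean segment running from one vertical side of the rectangle to the other, hence meeting them at a fixed angle; since $\sigma_{j+1}$ is the continuation of $\sigma_j$ across $\beta$ and the gluing is a translation, all the $J(\mathrm{supp}(\sigma_j))$ are parallel. As they accumulate the side $l_\Gamma$ without ever meeting it (for $\sigma$ avoids $\Gamma$), they must be parallel to $l_\Gamma$, i.e.\ horizontal; hence each $\sigma_j$ is a sub-arc of a leaf, so $\overline{\mathrm{supp}(\sigma_j)}$ is a periodic geodesic and $\sigma$ itself is periodic, contradicting the hypothesis.

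I expect the main obstacle to be the second point above: checking that the boundary graph, with its poles, really produces a genuine rectangle with a straight side $l_\Gamma$ that the segments $J(\mathrm{supp}(\sigma_j))$ accumulate. This is precisely where the real-period hypothesis, the angle formula of Corollary~\ref{boundaryburchak}, and the unfolding effect of the local isometries $z\mapsto z^{\rho_j+1}$ at the poles combine; once these are in hand, the remainder is a near-verbatim transcription of the periodic case.
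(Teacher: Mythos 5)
Your proposal is correct and follows essentially the same route as the paper: Lemma~\ref{finitinters}, Lemma~\ref{pbl2} and Remark~\ref{R:sigmainR} to trap $\sigma$ in a ring domain $R\subset V$ with $\partial R=\Gamma\cup\gamma$, then Lemma~\ref{euclideanrectangle} to obtain the rectangle $J(U)$ (the paper notes explicitly that this works even when $\partial U$ contains poles), and finally a verbatim repetition of the parallel-segments argument of Proposition~\ref{periodicyoqekan}. The two ``points deserving attention'' you flag are already packaged in the paper's lemmas---the translation gluing is established inside the proof of Lemma~\ref{euclideanrectangle} via the constancy of $J_1|_{U_2}-J_2|_{U_2}$, and the unfolding $z\mapsto z^{\rho_j+1}$ at the poles of $\Gamma$ is exactly how Lemma~\ref{pbl2} produces the ring domain---so no additional verification is needed.
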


\begin{proof}
Assume, by contradiction, that  the  $\omega$-limit set of $\sigma$  is  a boundary graph of saddle connections~$\Gamma$. Let $V$ be  a connected open subset of~$S$ eventually containing $\sigma$ and such that $\Gamma$ is a connected component of $\partial V$. 

Thanks to Lemma \ref{finitinters}, $\sigma$ does not intersect $\Gamma$. Let $g$ be a singular flat metric adapted to $\nabla$. By Lemma \ref{pbl2}, there exists a ring domain $R\subset V$ with $g-$width equal to $r>0$ such that 
\[
\partial R=\Gamma\cup\gamma(r),
\]
where $\gamma(r)$ is a periodic geodesic for $\nabla$. Moreover, by Remark~\ref{R:sigmainR},  we can also assume that the support of~$\sigma$ is eventually contained in~$R$, i.e., there exists $\varepsilon_0\in[0,\varepsilon)$ such that $\sigma(t)\in R$ for all $t\in[\varepsilon_0,\varepsilon)$.  Notice that $R$ does not contain any pole of $\nabla$.  

 Fix $p_1\in \Gamma$. Choose $p_2:=p_2(r)\in \gamma(r)$ such that $\text{dist}_g(p_1,p_2)=r$. We can choose $r$ small enough so that there exists a geodesic segment $\beta\colon [0,1]\to \overline{R}$ such that $\beta(0)=p_1$ and $\beta(1)=p_2$  and $\mathrm{length}_g(\beta)=r$. Let $(U,z)$ be a chart with $U:=R\setminus \mathrm{supp}(\beta)$. Let $\eta$ be the representation of $\nabla$ on $U$. Let $J$ be a local isometry of $\nabla$ on $U$.  By Lemma \ref{euclideanrectangle}, $J(U)$ is a  rectangle (even if the boundary of $U$ contains poles). The rest of the proof is the same as the proof of Proposition \ref{periodicyoqekan}.
\end{proof}

\begin{proof}[{Proof of Theorem \ref{pbt1}}]
  It follows from Theorem \ref{t3} together with Propositions \ref{periodicyoqekan} and~\ref{boundarygraphyoqekan}.
\end{proof}

\appendix

\section{Transversally Cantor-like geodesic sets}\label{s:appA}
\begin{definition}
    Let $\nabla$ be a meromorphic connection on a compact Riemann surface $S$. A set $W\subset S$ with $\mathring W=\emptyset$ is said to be a \textit{transversally Cantor-like geodesic set} if the following  conditions hold:
  \begin{enumerate}
        \item there exists a maximal non self-intersecting geodesic $\sigma\colon(\varepsilon_-,\varepsilon_+)\to S^o$ such that $W$ is the closure of the support of~$\sigma$;
         \item for any non self-intersecting geodesic $\gamma\colon(-\delta,\delta)\to S^0$ transverse to $\sigma$ the intersection $\gamma|_{[-\delta/2,\delta/2]}\cap W$ is a perfect totally disconnected set (a Cantor set).
  \end{enumerate}
\end{definition}

\begin{lemma}\label{l:Acl}
     Let $\nabla$ be a meromorphic connection on a compact Riemann surface $S$. Let $\sigma\colon (\varepsilon_-,\varepsilon_+)\to S^o$ be a maximal simple geodesic and denote by  $W$ its $\omega$-limit set. Assume $\operatorname{supp}(\sigma)\subseteq W$. Then either
     \begin{enumerate}
         \item $W$ has non-empty interior; or
         \item  $\sigma$ is a closed geodesic and $W=\operatorname{supp}(\sigma)$; or
         \item $W$ is a transversally Cantor-like geodesic set.
           \end{enumerate}
\end{lemma}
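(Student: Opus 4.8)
The plan is to analyze the local transverse structure of $W$ and to split into cases according to whether $\mathring W$ is empty and whether $\sigma$ is closed. First I would record two structural facts. Since an $\omega$-limit set is always closed and $\operatorname{supp}(\sigma)\subseteq W$, every point of $W$ is a limit of points $\sigma(t_n)$ with $t_n\uparrow\varepsilon_+$, and conversely $\overline{\operatorname{supp}(\sigma)}\subseteq W$; hence $W=\overline{\operatorname{supp}(\sigma)}$. In particular the given maximal simple geodesic $\sigma$ already witnesses condition~(1) in the definition of a transversally Cantor-like geodesic set. Moreover $\sigma$ cannot tend to a pole in forward time, since otherwise $W$ would reduce to that pole and $\operatorname{supp}(\sigma)\subseteq W$ would be impossible. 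If $\mathring W\neq\emptyset$ we are in case~(1), so from now on assume $\mathring W=\emptyset$. If $\sigma$ is closed then $\operatorname{supp}(\sigma)$ is already a compact curve, so $W=\overline{\operatorname{supp}(\sigma)}=\operatorname{supp}(\sigma)$ and we are in case~(2). There remains the case $\mathring W=\emptyset$ with $\sigma$ not closed, where I must verify condition~(2) of the definition.

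The core step is a local product (lamination) structure for $W$ transverse to $\sigma$. Fix a simple geodesic $\gamma$ transverse to $\sigma$, meeting it at $q=\gamma(0)=\sigma(t_0)$, and choose a $\nabla$-chart $(U,z)$ centred at $q$ in which, by Proposition~\ref{p:localis}, geodesics are Euclidean segments. Because $q\in W=\overline{\operatorname{supp}(\sigma)}$ and the forward orbit of $\sigma$ is dense in $W$, there are times $t_n\uparrow\varepsilon_+$ with $\sigma(t_n)\to q$. Here I would use simplicity: if the directions $\sigma'(t_n)$ did not converge, up to sign, to $\sigma'(t_0)$, then inside a small enough box the segment through $\sigma(t_n)$ and the segment through $q$ would cross, contradicting that $\sigma$ is non self-intersecting. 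Hence all nearby arcs of $\operatorname{supp}(\sigma)$ are, as unoriented curves, nearly parallel Euclidean segments, so they are pairwise disjoint and each meets the transversal $\gamma$ in exactly one point. Passing to the closure, $W\cap U$ is a union of such parallel leaves, yielding a homeomorphism $W\cap U\cong K\times I$, where $K:=\gamma|_{[-\delta/2,\delta/2]}\cap W$ (taking $\delta$ small enough, and covering the compact middle arc by finitely many such boxes if necessary).

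With this product structure the two remaining properties follow. For total disconnectedness: if $K$ contained a nondegenerate interval, then $K\times I$ would be a two-dimensional subset of $W$, forcing $\mathring W\neq\emptyset$, contrary to assumption; hence $K$ has no interval. For perfectness: given $k\in K$, the leaf $\ell_k\subseteq W$ through $k$ is accumulated by $\operatorname{supp}(\sigma)$, since the forward orbit is dense in $W$; the nearby arcs of $\sigma$ meet $\gamma$ at points $k_n\in K\cap\operatorname{supp}(\sigma)$ with $k_n\to k$, and since a simple geodesic meets $\gamma$ at most once at each point, the $k_n$ are eventually distinct from $k$, so $k$ is not isolated in $K$. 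Therefore $K$ is compact, perfect and totally disconnected, i.e.\ a Cantor set, and $W$ is a transversally Cantor-like geodesic set, which is case~(3).

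The main obstacle I anticipate is the local product structure, and specifically the step ruling out transverse returns: one must make precise, uniformly over the box, the claim that two geodesic segments through nearby points whose directions differ by a definite angle must intersect, and then promote the convergence of directions into an honest lamination chart. This includes the care needed because returns may occur with either orientation and because the transversal itself must be kept transverse to all nearby leaves along the whole middle arc. Establishing that $W\cap U$ is genuinely a product $K\times I$, rather than merely a union of disjoint arcs, is where the continuous dependence of geodesics on their initial data and the compactness of the relevant pieces of $\sigma$ do the real work.
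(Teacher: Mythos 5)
Your proof is correct, but it takes a genuinely different route from the paper's. The paper's argument is much shorter: assuming $\mathring W=\emptyset$, it fixes a transversal $\gamma$ and observes that an isolated point $z$ of $\operatorname{supp}(\gamma)\cap W$ forces $\sigma$ to pass through $z$ infinitely often, whence simplicity gives that $\sigma$ is closed; and when $\sigma$ is not closed it rules out an interval in $\gamma|_{[-\delta/2,\delta/2]}\cap W$ by citing \cite[Proposition~4.1]{AB}, which converts a geodesic segment inside the $\omega$-limit set crossed transversally by $\sigma$ into nonempty interior. You instead split a priori on whether $\sigma$ is closed and construct the local lamination (product) structure of $W$ by hand: direction convergence of returning arcs via the crossing-versus-simplicity argument, arcs as full chords in a $\nabla$-chart, passage to the closure, and then an invariance-of-domain step on $K\times I$. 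In effect you prove directly the special case of \cite[Proposition~4.1]{AB} that the paper outsources, and you also verify explicitly that $W=\overline{\operatorname{supp}(\sigma)}$, i.e.\ condition~(1) of the definition, which the paper leaves implicit. What each buys: the paper's proof is brief because it delegates the geometry to an earlier result; yours is self-contained and yields extra structure ($W$ is locally a product of a Cantor set with an interval), at the price of the technical work you honestly flag. Both flagged points can be closed off: tangency of $\gamma$ to a limit leaf would force $\gamma$ to coincide with that leaf's maximal geodesic (two geodesics agreeing on an arc agree globally), making $\gamma$ a leaf of $W$ and hence not transverse to $\sigma$, which gives the uniform transversality along the middle arc; and uniqueness of the leaf direction at each point of $W$ (again by the crossing argument) gives continuity of the leaf map, which is what the homeomorphism $W\cap U\cong K\times I$ and the invariance-of-domain step actually need. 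One small slip: the limit leaves are only nearly parallel chords, not exactly parallel as you write, but disjointness plus continuity of the leaf map is all your two-dimensionality argument requires.
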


\begin{proof}
Assume $W$ has empty interior. Pick a point $z_0\in \sigma$. For $\delta$ small enough let $\gamma\colon (-\delta,\delta)\to S^o$ be a geodesic transversal to $\sigma$ at $z_0$ such that $\gamma(0)=z_0$. Since $\operatorname{supp}(\sigma)\subseteq W$, then $\sigma$ must intersect $\gamma$ infinitely many times (not necessarily in distinct points). Assume that $z$ is isolated in $\operatorname{supp}(\gamma)\cap W$, i.e., there exists a neighbourhood $U$ of $z$ such that  $U\cap \bigl(\operatorname{supp}(\gamma)\cap W\bigr)=\{z\}$. Then $\sigma$ must pass through~$z$ infinitely many times; since $\sigma$ is simple we deduce that $\sigma$ is a closed geodesic.
    
Assume now $\sigma$ is not a closed geodesic. Then there are no isolated point in $\gamma|_{[-\delta/2,\delta/2]}\cap W$; to complete the proof that $W$ is a transversally Cantor-like geodesic set we must prove that $\gamma|_{[-\delta/2,\delta/2]}\cap W$ is totally disconnected. Indeed, if not, it must contain a closed interval.
Then, since $\sigma$ is transversal to~$\gamma$, we can apply \cite[Proposition~4.1]{AB} to show that the $\omega$-limit set $W$ has not empty interior, against our assumption.
\end{proof}

Thus the $\omega$-limit set $W$ of a non-closed geodesic is a transversally Cantor-like geodesic set when $W$ has empty interior and contains $\operatorname{supp}(\sigma)$, a possiblity not considered in the proofs of \cite[Theorem 4.3]{AB} and \cite[Theorem 0.1]{AT1}. An example of this phenomenon can be found in \cite{DFG}.

\end{document}